\newtheorem{theorem}{Theorem}
\newtheorem{lem}{Lemma}
\newtheorem{defi}{Definition}
\newtheorem{prop}{Proposition}
\theoremstyle{definition}
\newtheorem{rem}{Remark}[section]
\newcommand{\EE}{\ensuremath{\mathbb{E}}}
\newcommand{\R}{\ensuremath{\mathbb{R}}}
\newcommand{\C}{\ensuremath{\mathbb{C}}}
\newcommand{\Z}{\ensuremath{\mathbb{Z}}}
\DeclareMathOperator*{\sgn}{sgn}
\providecommand{\keywords}[1]{\textbf{\textit{Keywords---}} #1}
\begin{document}
\renewcommand{\labelitemi}{\textbullet}

\title{A phase transition for $q$-TASEP with a few slower particles}

\author{Guillaume Barraquand\thanks{Univ Paris Diderot, Sorbonne Paris Cit\'e, Laboratoire de Probabilités et Modèles Aléatoires UMR 7599, 5 rue Thomas Mann, 75013 PARIS. E-mail: {\tt barraquand@math.univ-paris-diderot.fr}}}
\date{}

\maketitle

\begin{abstract}
We consider a $q$-TASEP model started from step initial condition where all but finitely many particles have speed $1$ and a few particles are slower. It is shown in \cite{ferrari2013tracy} that  the rescaled  particles position of $q$-TASEP with identical hopping rates obeys a limit theorem \`a la Tracy-Widom. We  adapt this work to the case of different hopping rates and show that one observes the so-called BBP transition. Our proof is a refinement of Ferrari-Vet\H o's and does not require any condition on the parameter $q$ nor the macroscopic position of particles.
\end{abstract}
\keywords{Interacting particle systems, KPZ universality class, Tracy-Widom distribution, phase transition.}

\section{Introduction and main result}

The totally asymmetric simple exclusion process is a stochastic model of particles on the lattice $\Z$, with at most one particle per site (exclusion principle). Each particle has an independent exponential clock and jumps to the right by one when it rings, provided the neighbouring site is empty. The $q$-TASEP is a generalization introduced in \cite{borodin2011macdonald}. In this model, the $i$-th particle jumps to the right by one  at rate $a_i(1-q^{\text{gap}})$ where the gap is the number of consecutive empty sites to the next particle on the right. The parameter $q\in(0,1)$ can hence be seen as a repulsion strength between particles. 

Among many other stochastic models, the TASEP and the $q$-TASEP lie in the KPZ universality class, named from the Kardar-Parisi-Zhang stochastic  partial differential equation modelling the growth of interfaces. The most prominent common features of models belonging to this class are fluctuations on a scale $t^{1/3} $,  spatial correlations on a scale $t^{2/3}$, and Tracy-Widom type  statistics (see the review \cite{corwin2012kardar} on the KPZ class). 

Borodin-Corwin's theory of Macdonald processes \cite{borodin2011macdonald} provides an algebraic framework to study integrable models in the KPZ universality class, extending the Schur processes \cite{okounkov2001infinite,okounkov2003correlation} which prove useful in the study of TASEP and other models. Macdonald processes are a two parameter family of measures on interlacing sequences of integer partitions, or Gelfand-Tsetlin patterns. The probability of a given pattern is expressed as a product of Macdonald functions, which are symmetric functions depending on two formal parameters $q$ and $t$. Various particular choices of the parameters are examined in \cite{borodin2011macdonald}, leading to applications to several stochastic models from statistical mechanics. When the parameter $t$ is set to zero, Macdonald functions degenerate to $q$-Whittaker functions. Besides the potential interest of the model itself, the introduction of $q$-TASEP is natural since it is a marginal of the $q$-Whittaker process in the same way as the TASEP is a marginal of the Schur process. 
When $q$ tends to $1$, the $q$-Whittaker functions become Whittaker functions whose connections with directed polymers were established in \cite{o2012directed}. 
The limit of the $q$-Whittaker process when $q$ goes to $1$ is investigated in \cite{borodin2011macdonald, borodin2012free}. Hence, $q$-TASEP interpolates between TASEP when $q=0$, and the O'Connell-Yor semi-discrete polymer when $q\to 1$, under a particular scaling of the parameters \cite{borodin2012free}. After further rescaling the space, the semi-discrete polymer model converges to the continuum directed random polymer. The predictions of the KPZ universality class about the scaling exponents and the  limit theorem towards the Tracy-Widom distribution are proved in \cite{johansson2000shape} for TASEP and in \cite{ borodin2012free} for the one-point distribution of the free energy of continuous polymers.
As for the $q$-TASEP, when all particles have the same speed $a_i\equiv 1$ and starting from the so-called step initial condition, Ferrari and Vet\H o \cite{ferrari2013tracy} show that the properly rescaled position of particles converges in law to the emblematic GUE Tracy-Widom distribution. Moreover, they confirm the KPZ scaling theory \cite{spohn2012kpz, krug1992amplitude}, which conjecturally predicts the exact value of all non-universal constants arising in the law of large numbers and the Tracy-Widom limit theorem. 

Another ubiquitous probability distribution appearing in the KPZ universality class is a generalization of the Tracy-Widom distribution called the BBP distribution. It first arose in spiked random matrix theory \cite{baik2005phase}. A phase transition happens for perturbed Gaussian  ensembles of hermitian matrices, and the fluctuations at the edge of the spectrum lie in the Gaussian regime or in the Tracy-Widom regime, according to the rank and the structure of the perturbation.  Using connections between sample covariance matrices and last passage percolation (LPP), Baik, Ben Arous and P\'ech\'e \cite{baik2005phase} explain how their results translate in terms of last passage percolation on the first quadrant where the first finitely many rows have different means. Using then a coupling between LPP  and TASEP, Baik \cite{baik2006painleve} explains that one also  observes the BBP transition for the fluctuations of the current in TASEP started from step initial condition, where all but finitely many particles have rate 1 and a few  have a smaller rate. The number of slower particles plays the same role as the rank of the perturbation in the matrix model.
On the other degeneration, 
that is when $q$ goes to $1$, Borodin, Corwin and Ferrari \cite{borodin2012free} show the same phase transition for the fluctuations of the free energy of the O'Connell-Yor directed polymer when adding a local perturbation of the environment in a quite similar way.

A finite number of slower particles in the $q$-TASEP creates a shock on a macroscopic scale. Our purpose is to show the same phase transition for the fluctuations of particles around their hydrodynamic limit, depending on  the minimum rate and the number of slower particles.
We adapt 
 the asymptotic analysis made in \cite{ferrari2013tracy}. 
 The main technical difference is the following: in \cite{ferrari2013tracy}, the fluctuation result is proved with a slightly restrictive condition on the macroscopic position of particles, which forbids to study the very first particles. It concerns a $ \mathcal{O}(\text{time})$ quantity of particles though. In our work, we do not assume this condition to hold, confirming that it was purely technical as suspected by Ferrari and Vet\H o.

\subsection{The $q$-TASEP}

The $q$-TASEP is a continuous-time Markov process described by the coordinates of particles $X_N(t)\in \Z, N\in\mathbb{N}^*, t\in\R_+$. Its infinitesimal generator  is given by 
\begin{equation}
\left( L f \right)(\textbf{x}) = \sum_{k\in\mathbb{N}^*} a_k (1-q^{x_{k-1}-x_k-1})(f(\textbf{x}^k) - f(\textbf{x}))
\end{equation}
where $\textbf{x} = (x_0, x_1, \dots)$ is such that $x_i > x_{i+1}$ for all $i$, $\textbf{x}^k$ is the configuration where $x_k$ is  increased by one, and by convention $x_0=+\infty$. The step initial condition corresponds to 
$\forall i\in\mathbb{N}^*, x_i(0)=-i$. 

Let us first focus on the case where all particles have equal hopping rates, $a_i\equiv 1$. In this case, the gaps between particles $(x_i-x_{i+1} -1)_i$ have the same dynamics as a $q$-TAZRP ($q$-deformed Totally Asymmetric Zero-Range Process) introduced in \cite{sasamoto1998exact},  for which general results on invariant distributions of zero-range processes apply \cite{andjel1982invariant}. Hence, \cite{borodin2011macdonald} shows that translation invariant extremal invariant measures are renewal processes $\mu_{r }$ for $r\in[0,1)$ on $\Z$, with renewal measure according to the $q$-Geometric distribution of parameter $r$, i.e.
\begin{equation*}
\mu_{r}(x_i-x_{i+1} -1=k) = (r, q)_{\infty}  \frac{r^k}{(q,q)_k}
\end{equation*}
where $(z, q)_k = (1-z)(1-qz)\dots(1-q^{k-1}z)$.

One expects that the rescaled particle density $\rho(x, \tau)$, given heuristically by 
\begin{equation*}
\rho(x, \tau) = \lim_{t\to\infty} \mathbb{P}(\text{ There is a particle at }\lfloor xt\rfloor \text{ at time }t\tau ),
\end{equation*}
 satisfies the PDE
\begin{equation}
\frac{\partial }{\partial \tau}\rho(x, \tau) + \frac{\partial}{\partial x}j(\rho)(x, \tau) = 0
\label{equationparticleconservationPDE}
\end{equation}
where $j(\rho)$  is the particle current at density $\rho$.

By local stationarity assumption, we mean that gaps between particles are locally distributed according to i.i.d. $q$-geometric random variables for some parameter $r$ which depends on the macroscopic position. Under this assumption and using the particle conservation PDE (\ref{equationparticleconservationPDE}), one can guess the deterministic profile of the $q$-TASEP, see \cite[Section 3]{ferrari2013tracy} for details. 
In order to state this result, we need some preliminary definitions. 
We fix the parameter $q\in (0,1)$ and choose a real number $\theta >0$ which parametrizes the macroscopic position of particles, as explained below. 
\begin{defi}[\cite{ferrari2013tracy}]
We recall the definition of the $q$-Gamma function
\begin{equation*}
\Gamma_q(z)=(1-q)^{1-z}\frac{(q;q)_\infty}{(q^z;q)_\infty}.
\end{equation*}
Then the $q$-digamma function is defined by
\begin{equation*}
\Psi_q(z)=\frac\partial{\partial z}\log \Gamma_q(z).
\end{equation*}
For $q\in (0,1)$ and $\theta >0$, we also define the functions
\begin{eqnarray}
\kappa\equiv & \kappa(q,\theta)=&\frac{\Psi'_q(\theta)}{(\log q)^2q^\theta},\label{defkappa} \\
f\equiv & f(q,\theta)=&\frac{\Psi'_q(\theta)}{(\log q)^2}-\frac{\Psi_q(\theta)}{\log q}-\frac{\log(1-q)}{\log q},\label{deff}\\
\chi\equiv &\chi(q,\theta)=&\frac{\Psi'_q(\theta)\log q-\Psi''_q(\theta)}2\label{defchi}.
\end{eqnarray}
\end{defi}
Then, the following law of large numbers holds when $N$ goes to infinity,
\begin{equation*}
\frac{X_N(\tau = \kappa N )}{N}\longrightarrow f-1.
\end{equation*}
Let us explain the arguments leading to this result. Under local stationarity assumption, for a real number $x$ such that around $xt$, gaps between particles are distributed as $q$-Geometric random variables of parameter $r$, we have 
\begin{equation}
\rho(x, t) = \frac{1}{1+\EE[\mathrm{gap}]} = \frac{\log q}{\log(q) + \log( 1-q) + \Psi_q(\log_q r)}.
\end{equation}
Writing $x= x(r)$ and after the change of variables $\log_q r =\theta$, the PDE (\ref{equationparticleconservationPDE}) implies that $x(\theta) = \left(f(q, \theta)-1\right)/\kappa(q, \theta)$. Finally, integrating this density gives a law of large numbers for the integrated current of particles, or the equivalent statement on $X_N(\tau)$ given above.
Moreover, under local stationarity assumption, the gaps between consecutive particles around particle $N$ at time $\kappa(q, \theta) N$ are distributed according to i.i.d. $q$-Geometric random variables of parameter $q^{\theta}$. The averaged hopping rate (or the averaged speed of a tagged particle) is then $\EE[1-q^{\mathrm{gap}}] = q^{\theta}$.
\begin{rem}
One could choose the time $\tau$ to be the free parameter which tends to infinity, but the formulae are slightly simpler when $N$ tends to infinity and $\tau$ depends on $N$.
\end{rem}

\subsection{Main result}

In this paper, we aim to study a $q$-TASEP started form step initial condition where all but finitely many particles have rate $1$, and some particles are slower. Notice that as we will see in the proofs, the case of a finite number of faster particles does not change anything to the macroscopic behaviour.

 Let $a_{i_1}, a_{i_2}, \dots , a_{i_m}$ be the rates of the slower particles, and $\alpha$ be the rate of the slowest particle and suppose that $k\leqslant m$ particles have rate $\alpha$. For later use, it is convenient to set the notation $A:=\log_q(\alpha)$.
  
The slower particles create a shock on a macroscopic scale and influence the law of large numbers. The particles which are concerned by the shock are all the particles that, in absence of slower particles, would have an averaged hopping rate greater than $\alpha$. 
In order to state the results precisely, we need to define some additional functions.
\begin{defi}
For $q\in (0,1)$ and $\theta >0$, we define
\begin{eqnarray}
 g\equiv & g(q, \theta) = &\frac{\Psi'_q(\theta)}{(\log q)^2}\frac{\alpha}{q^{\theta}}-\frac{\Psi_q(A)}{\log q}-\frac{\log(1-q)}{\log q},\label{defg} \\
\sigma\equiv &\sigma(q,\theta)=&\Psi'_q(\theta) \frac{\alpha}{q^{\theta}}-\Psi'_q(A).\label{defsigma}
\end{eqnarray}
\end{defi}
Then the following law of large numbers holds, when $N$ goes to infinity,
\begin{equation*}
\frac{X_N(\tau=\kappa N)}{N} \longrightarrow \left\lbrace\begin{matrix}
f-1 & \text{ when } & \alpha > q^{\theta},\\
g-1 & \text{ when } & \alpha \leqslant q^{\theta}.\\
\end{matrix} \right. 
\end{equation*}
The limit shape of $\frac{1}{\tau}(X_N(\tau)+N, N) $ is drawn in Figure \ref{figurelimitshape}. One sees that the limiting profile is linear when $\alpha < q^{\theta}$. It means that the density of particles is constant inside the shock and particles have an averaged speed $ \alpha$. 

The scaling theory of single-step growth models explained in \cite{spohn2012kpz}  and the KPZ class conjecture give a way to compute the non universal constants arising in the law of large numbers, and the scale and precise variance of fluctuations. Nevertheless, these results are expected to hold only at points where the limiting shape is strictly convex. The results in \cite{ferrari2013tracy} confirm the conjecture for a $q$-TASEP where all particles have equal hopping rates. In the presence of slower particles, the limiting shape is linear inside the shock (cf Figure \ref{figurelimitshape}), and the fluctuations are not predicted by the KPZ class conjecture.

\begin{rem}
In \cite{spohn2012kpz}, the convexity condition is given on the limit profile of the height function, but this is equivalent.
\end{rem}
 
\begin{figure}
\begin{center}
\includegraphics[scale=0.7]{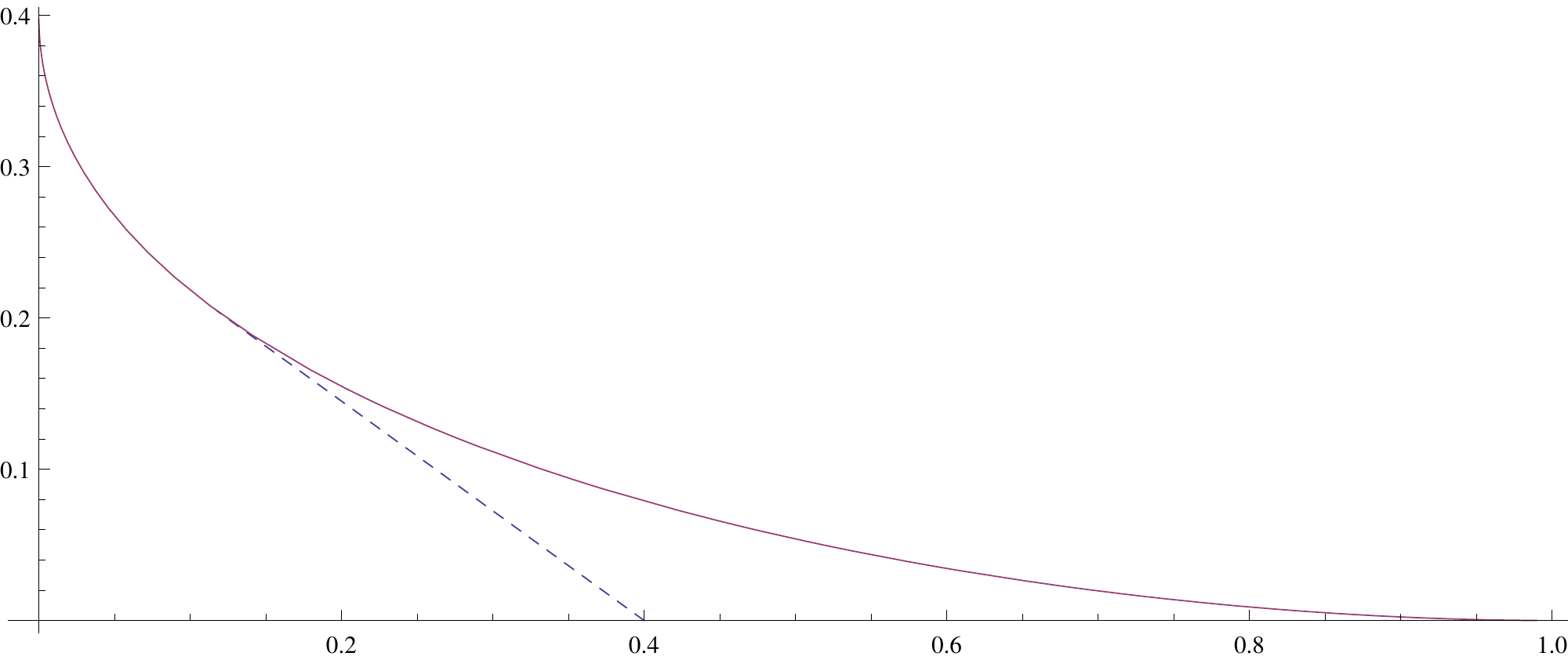}
\end{center}
\caption{Limit shapes of $\frac{1}{\tau}(X_N(\tau)+N, N) $ for $q=0.6$. The solid line corresponds to  $\alpha=1$ (no slow particle). The dashed line corresponds to $\alpha=0.4$ (one or several slower particles). Note that the curved line is the parametric curve $(f/\kappa, 1/\kappa)$, whereas the straight line part is the parametric curve $(g/\kappa, 1/\kappa)$, when $\theta$ ranges from $0$ to $\log_q \alpha $, i.e. $\theta$ satisfying the condition $\alpha<q^{\theta}$. }
\label{figurelimitshape}
\end{figure}

Let us explain the scalings that we use in the paper. When $\theta$ is such that $q^{\theta} <\alpha$, particles around the macroscopic position parametrized by $\theta$ have speed $q^{\theta} $ and are asymptotically independent from the slower particles. We expect to observe Tracy-Widom fluctuations on a $N^{1/3}$ scale with spatial correlation on a $N^{2/3}$ scale. Hence, the time $\tau(N,c)$ is set as 
\begin{equation*}
\tau(N,c)=\kappa N+ c q^{-\theta} N^{2/3},
\end{equation*}
where $c$ is a free but fixed parameter.
The macroscopic position of the $N^{\text{th}}$ particle,  denoted $p(N,c)$, is given by
\begin{equation*}
p(N,c) = (f-1)N +cN^{2/3} - c^2 \frac{(\log q)^3}{4\chi} N^{1/3}\  .
\end{equation*}
When $ \theta $  is such that $\alpha < q^{\theta}$, we expect the fluctuations to live on the $N^{1/2}$ scale, although the limiting law is not necessarily Gaussian. We set 
\begin{equation*}
\tau^*(N, c)=\kappa N+ c N^{1/2}/\alpha,  
\end{equation*}
and the macroscopic position is 
\begin{equation*} 
 p^*(N,c) =(g-1)N +cN^{1/2}.
\end{equation*}
The aim is to study the fluctuations of $X_N(\tau(N,c))$ (resp. $X_N(\tau^*(N,c))$) around $p(N,c)$ (resp. $p^*(N,c)$).

Next, we define classical probability distributions from random matrix theory in a convenient way for our purposes.
\begin{defi}[Distribution functions]
\label{defdistributions}
\begin{enumerate}
\item The distribution function $F_{\rm GUE}(x)$ of the GUE Tracy-Widom distribution is defined  by $F_{\rm GUE}(x)=\det(I-K_{\rm Ai})_{\mathbb{L}^2(x,+\infty )}$ where $K_{\rm Ai}$ is the Airy kernel, 
\begin{equation*}
K_{\rm Ai} (u, v) = \frac{1}{(2i\pi)^2} \int_{e^{-2i\pi/3}\infty}^{e^{2i\pi/3}\infty} \mathrm{d}w \int_{e^{-i\pi/3}\infty}^{e^{i\pi/3}\infty} \mathrm{d}z \frac{e^{z^3/3-zu}}{e^{w^3/3-wv}}\frac{1}{z-w}, 
\end{equation*}
where the contours for $z$ and $w$ do not intersect.
\item Let $\mathbf{b} = (b_1, \dots, b_k)\in \R^k$. The BBP  distribution of rank $k$ from \cite{baik2005phase} is defined by   $F_{\rm BBP,k, \mathbf{b}}(x) =\det(I-K_{\rm BBP, k, \mathbf{b}})_{\mathbb{L}^2(x,+\infty )}$ where $K_{\rm BBP, k, \mathbf{b}}$ is given by
\begin{equation*}
K_{\rm BBP, k, \mathbf{b}} (u, v) = \frac{1}{(2i\pi)^2} \int_{e^{-2i\pi/3}\infty}^{e^{2i\pi/3}\infty} \mathrm{d}w \int_{e^{-i\pi/3}\infty}^{e^{i\pi/3}\infty} \mathrm{d}z \frac{e^{z^3/3-zu}}{e^{w^3/3-wv}}\frac{1}{z-w}\left(\frac{z-b}{w-b}\right)^k,
\end{equation*}
where the contour for $w$ passes to the right of the $b_i$'s, and the contours for $z$ and $w$ do not intersect.
\item $G_k(x)$ is the distribution of the largest eigenvalue of a $k\times k$ GUE, which also has a Fredholm determinant representation. $G_k(x)=\det(I-H_k)_{\mathbb{L}^2(x,+\infty )}$, where $H_k$ is the Hermite kernel given by
\begin{equation*}
H_k(u, v) = \frac{c_{k-1}}{c_k} \frac{p_k(u)p_{k-1}(v) - p_{k-1}(u)p_k(v)}{u-v} e^{-(u^2+v^2)/4}, 
\end{equation*}
where $c_n = 1/((2\pi)^{1/4}\sqrt{n!})$ and $(p_n)_{n\geqslant 0}$ is a family of orthogonal polynomials determined by $\int_{-\infty}^{\infty}  p_m(t)p_n(t) e^{-t^2/2}\mathrm{d}t = \delta_{mn}$. The kernel $H_k$ has an integral representation
\begin{equation*}
H_k(u, v) = \frac{1}{(2i\pi)^2} \int_{e^{i(\varphi - \pi)}\infty}^{e^{i(\pi-\varphi)}\infty} \mathrm{d}w \int_{e^{i(\gamma-\pi/2)}\infty}^{e^{i(\pi/2-\gamma)}\infty} \mathrm{d}z \frac{e^{z^2/2-zu}}{e^{w^2/2-wv}}\frac{1}{z-w}\left(\frac{z}{w}\right)^k , 
\end{equation*}
with $\varphi, \gamma \in(0, \pi/4)$, and where the contour for $w$ passes to the right of $0$, and the contours do not intersect. For the equivalence between these formulas, see e.g.  \cite{bleher2005integral} and references therein.
\end{enumerate}
\end{defi}
We are now able to state our main result.
\begin{theorem}
\label{theomainresult}
Let $k$ be the number of particles having rate $\alpha$.
\begin{itemize}
\item If $q^{\theta}<\alpha$, then writing $X_N(\tau(N))=(f-1)N +cN^{2/3} - c^2 \frac{(\log q)^3}{4\chi} N^{1/3}  +\frac{\chi^{1/3}}{\log q}\xi_NN^{1/3}$, we have for any $x\in\R$, 
\begin{equation*}
\lim_{N\to\infty}\mathbb{P}(\xi_N <x)=F_{\rm GUE}(x).
\end{equation*}
\item If $q^{\theta}=\alpha$ then writing again $X_N(\tau(N))=(f-1)N +cN^{2/3} - c^2 \frac{(\log q)^3}{4\chi} N^{1/3}  +\frac{\chi^{1/3}}{\log q}\xi_NN^{1/3}$, we have for any $x\in\R$, 
\begin{equation*}
\lim_{N\to\infty}\mathbb{P}(\xi_N <x)=F_{\rm BBP,k, \mathbf{b}}(x)
\end{equation*}
where $\mathbf{b} = (b, \dots, b)$ with $b=  \frac{c(\log q)^2}{2\chi^{2/3}} $.
\item If $q^{\theta}>\alpha$, then writing $X_N(\tau^*(N))=(g-1)N +cN^{1/2}+\frac{\sigma^{1/2}}{\log q}\xi_NN^{1/3}$, we have for any $x\in\R$,
\begin{equation*}
\lim_{N\to\infty}\mathbb{P}(\xi_N <x)=G_k(x).
\end{equation*}
\end{itemize}
\end{theorem}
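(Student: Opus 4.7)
The starting point is the Borodin--Corwin Fredholm determinant formula for the $q$-Laplace transform of $q^{X_N(\tau)+N}$ under step initial condition, specialised to the inhomogeneous jump rates $a_1,\dots,a_N$. I would write
\begin{equation*}
\EE\!\left[\frac{1}{(\zeta q^{X_N(\tau)+N};q)_\infty}\right] \;=\; \det(I+K_\zeta)_{\mathbb{L}^2(C)},
\end{equation*}
with a kernel built from a double contour integral whose integrand contains the ``bulk'' exponential $\exp\bigl(N\bigl[\tau/N\,(z-w)+\log((w;q)_\infty/(z;q)_\infty)\bigr]\bigr)$ and, crucially, the extra factor $\prod_{j=1}^m (z-a_{i_j})/(w-a_{i_j})$ encoding the slower particles. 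The first step is to transform this into a formula for $\mathbb{P}(X_N(\tau)\geqslant s)$ by a limit $\zeta\to -q^{-s-N}$, obtaining a Fredholm determinant over a rescaled contour whose kernel is amenable to steep-descent analysis.

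Next I would perform saddle point analysis on the ``bulk'' exponent. The function to analyse has a double critical point exactly at $z_0=q^\theta$, where the scalings of $\kappa$, $f$ and $\chi$ have been tuned so that the first and second derivatives vanish at $z_0$ and the third derivative produces $\chi$. After rescaling $z=z_0+\chi^{1/3}\tilde z/N^{1/3}$ (similarly for $w$), the bulk exponent converges to $\tilde z^3/3 - \tilde z \tilde u$, and $1/(z-w)$ becomes $1/(\tilde z-\tilde w)$. This is exactly the skeleton of the Airy kernel. The slow-particle factor $\prod_{j}(z-a_{i_j})/(w-a_{i_j})$ then converges to
\begin{equation*}
\left(\frac{z_0-\alpha}{z_0-\alpha}\right)^{m-k}\cdot\left(\frac{\tilde z-\tilde b}{\tilde w-\tilde b}\right)^{k}\quad\text{when }\alpha=q^\theta,
\end{equation*}
giving the BBP kernel with the claimed parameter $\tilde b = c(\log q)^2/(2\chi^{2/3})$; it converges to $1$ when $\alpha>q^\theta$ (slow particles far from the saddle) giving $F_{\rm GUE}$; and it diverges when $\alpha<q^\theta$, in which case the relevant contribution must be extracted from the residues at $w=\alpha$, not from the saddle. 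In this supercritical regime, deforming the $w$-contour to the right crosses the $k$ poles at $\alpha$, and the sum of residues, after the $N^{1/2}$ rescaling $\tau=\tau^*(N,c)$, produces a $k\times k$ determinant that one recognises as $G_k$ via the integral representation of the Hermite kernel given in Definition~\ref{defdistributions}.

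To conclude, I would verify that the convergence of kernels is accompanied by uniform decay estimates so that the Fredholm determinants converge, using standard dominated-convergence arguments on trace norms with Gaussian/exponential bounds coming from the cubic/quadratic decay along the steep-descent contours. In the BBP and GUE cases this mirrors the structure of \cite{ferrari2013tracy}; in the $G_k$ case one must also show that the non-residue part of the kernel is negligible on the $N^{1/2}$ scale, which follows from the fact that the $w$-contour can be pushed past $\alpha$ onto a steep-descent contour where the bulk exponent is strictly negative.

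\textbf{Main obstacle.} The hard part, as flagged in the introduction, is to perform the contour deformation \emph{without} the restrictive hypothesis on $\theta$ used in \cite{ferrari2013tracy}. The standard choice of steep-descent contours around $z_0=q^\theta$ fails when $\theta$ is small because the $z$-contour would have to cross poles of $(z;q)_\infty^{-1}$ at $1,q^{-1},q^{-2},\dots$, and in the inhomogeneous case it must also avoid the poles $w=a_{i_j}$ while keeping $z$ and $w$ separated. I would spend the bulk of the technical work on constructing, for every $\theta>0$, admissible contours that (i) pass through $z_0$ at the correct angles $\pm\pi/3$ and $\pm 2\pi/3$, (ii) remain steep-descent globally (not only locally), and (iii) accommodate the slow-particle poles in each regime $\alpha\gtrless q^\theta$. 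Once these contours are produced, the three asymptotic statements follow from the saddle/residue analysis sketched above in a uniform manner.
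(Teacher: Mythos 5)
Your plan for the cases $q^{\theta}<\alpha$ and $q^{\theta}=\alpha$ is essentially the paper's: the same Fredholm determinant input (Theorem \ref{theofredholmdet}), the same Laplace-type analysis of the exponent with double critical point at $\theta$, and the same mechanism for the perturbation factor $\phi(Z)/\phi(W)$, which tends to $1$ when the slow rates sit away from the saddle (GUE) and contributes $(z/w)^k$ at criticality, becoming $\left((z-b)/(w-b)\right)^k$ with $b=c(\log q)^2/(2\chi^{2/3})$ after the final change of variables (BBP). For $q^{\theta}>\alpha$, however, you take a genuinely different route: deform the $w$-contour through the pole at $\alpha$ and reduce to a residue determinant. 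The paper never extracts residues; it re-centres with $g$ in place of $f$, observes that the new exponent $g_0$ has a \emph{simple} critical point exactly at $A=\log_q\alpha$, i.e.\ at the pole, runs a fresh steep-descent argument on the $N^{1/2}$ scale (Lemmas \ref{lemmasteepdescent2}, \ref{lemmaexponentialdecay2}, \ref{lemmacontributionresidusgaussien}, Proposition \ref{propositionlimitfredholmgaussien}) keeping the order-$k$ pole just inside a locally modified contour, and identifies the limit kernel with the Hermite kernel via $\det(I-AB)=\det(I-BA)$. Your route is plausible in principle (the limiting kernel $H_k$ is indeed of rank $k$), but the step you treat as obvious is not: under the $\tau^*$ scaling, $\alpha$ is itself the critical point of the bulk exponent, so a contour pushed just past $\alpha$ gains only $\Re[g_0(W)]-g_0(A)\gtrsim\epsilon^2$ near its vertex; to make the non-residue part negligible you must establish global steep descent for the shifted contour, including the circles carrying the residues of $\pi/\sin(\pi(Z-W))$. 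Moreover the singularity at $w=\alpha$ is a single pole of order $k$, not $k$ simple poles, so the ``sum of residues'' is a derivative computation whose cross terms with the remaining contour must also be controlled. None of this is carried out.

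The more serious gap concerns what the paper actually contributes beyond \cite{ferrari2013tracy}: removing the condition $q^{\theta}\leqslant 1/2$. You flag this as the main obstacle but do not solve it, and your diagnosis is off: the difficulty is not that the $z$-contour must cross poles of $(z;q)_\infty^{-1}$, nor the local angles $\pm\pi/3$; it is that the contour $\mathcal{C}_{\theta,\varphi}$ with $\varphi$ close to $\pi/2$ used in \cite{ferrari2013tracy} fails to be steep-descent for $-\Re[f_0]$ when $q^{\theta}>1/2$, and that with a narrower wedge one must control the extra residues of the sine inverse at $Z=W+j$. The paper's concrete device is to take the wedge at angle $\pi/4$ (the explicit positivity computation in Lemma \ref{lemmasteepdescent1}) and, for each residue $W+j$, to build piecewise horizontal/vertical paths, positioned relative to $\mathcal{C}_{\theta,\pi/2}$, along which $\Re[f_0]$ decreases (Lemma \ref{lemmacontributionresidus}; its analogue Lemma \ref{lemmacontributionresidusgaussien} uses that $g_0-f_0$ is linear). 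Without this, or an equivalent mechanism, your programme would only reprove the theorem under the same restriction on the macroscopic position as in \cite{ferrari2013tracy}, which is precisely the hypothesis the statement is meant to dispense with.
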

\begin{rem}
These results on the asymptotic positions of particles readily translate in terms of current of particles or height function, as in \cite[Theorem 2.9]{ferrari2013tracy}.
\end{rem}
\begin{rem}
Furthermore, it is possible to observe the BBP distribution $F_{\mathrm{BBP}, k, \mathbf{b}}$ for any arbitrary vector $\mathbf{b}$. One has to perturb the rates on a scale $N^{-1/3}$. Let fix some $\theta >0$ and assume that for $1\leqslant i\leqslant k$, $a_i = q^{ \theta + \tilde{b}_i N^{-1/3}}$ and the rates of all other particles are higher than $q^{\theta} $. Then, if the random variable $\xi_N $ is defined as in Theorem \ref{theomainresult}, $\lim_{N\to\infty}\mathbb{P}(\xi_N <x)=F_{\rm BBP,k, \mathbf{b}}(x)$ where  
$\mathbf{b} = (b + \tilde{b}_1, \dots , b + \tilde{b}_1) $ with $b=  \frac{c(\log q)^2}{2\chi^{2/3}} $ as before. In terms of Macdonald processes, this perturbation of the rates corresponds to a perturbation of the parameters of the $q$-Whittaker process. It is the precise analogue of the perturbation of the parameters of the Whittaker process applied to the O'Connell-Yor semi-discrete random polymer, for which the same result holds for the fluctuations of the free energy  \cite[Theorem 2.1]{borodin2012free}.
\label{rem:fullBBP}
\end{rem}
\section{Asymptotic analysis}

In this section, we prove the main theorem \ref{theomainresult}. The asymptotic analysis we present here is an instance of Laplace's method closely adapted from \cite{ferrari2013tracy}. Fix a $\theta >0$ and the parameter $c\in\R$. We start from a Fredholm determinant representation for the $q$-Laplace transform of $q^{X_N(\tau)}$, which  characterizes the law of $X_N(\tau)$. It was first proved in \cite{borodin2011macdonald, borodin2012duality} in a slightly different form, and in \cite{borodin2012free} in the form which seems the most convenient for an  asymptotic analysis. Before stating this result, we  need  to define some integration contours in the complex plane.
\begin{defi}[see Figure \ref{figurecontours1}]
\label{defcontour1}
We define a family of contours  $\tilde{\mathcal{C}}_{\bar{\alpha}, \varphi}$ for $\bar{\alpha}>0 $ and $\varphi\in(0, \pi/2) $ by 
\begin{equation*}
\tilde{\mathcal{C}}_{\bar{\alpha}, \varphi} = \lbrace \bar{\alpha} + e^{i\varphi \sgn(y)}\vert y \vert , y\in\R \rbrace,
\end{equation*}
 oriented from bottom to top. For every $w\in \tilde{\mathcal{C}}_{\bar{\alpha}, \varphi}$, we define a contour $\tilde{\mathcal{D}}_w$ by 
 \begin{multline*}
 \tilde{\mathcal{D}}_w = \left]R-i\infty, R-id \right] \cup \left] R-id, 1/2 -id\right] \cup \left] 1/2 -id, 1/2 +id\right]\\
  \cup \left] 1/2 +id , R+id\right] \cup \left]R+id , R+i\infty \right[ 
 \end{multline*}
 oriented from bottom to top, where $R,d>0$ are chosen such that:
\begin{itemize}
\item[(i)]Let $b=\pi/4 -\varphi/2$, then $\arg(w(q^s-1))\in(\pi/2+b, 3\pi/2-b)$ for any $s\in \tilde{\mathcal{D}}_w$.
\item[(ii)]$q^sw, s\in \tilde{\mathcal{D}}_w$ stays to the left of $\tilde{\mathcal{C}}_{\bar{\alpha}, \varphi}$.
\end{itemize}
These contours always exist (see Remark 4.9 in \cite{borodin2012free}): to check condition (ii), it is enough to prove that the points $wq^{1/2\pm ir}$ stay on the left of $\tilde{\mathcal{C}}_{\bar{\alpha}, \varphi}$ which follows from simple geometric arguments for $d$ small enough. It can be seen in Figure  \ref{figurecontours1}. To satisfy condition (i), the argument of $w q^s- w$ can be made as small as we want choosing $d$ small enough and $R$ large enough.
\end{defi} 
\begin{figure}
\begin{center}
\begin{tikzpicture}[scale=1]
\draw[thick] (-1, 0) -- (6,0);
\draw (0,0.1) -- (0,-0.1) node[anchor=north]{0};
\draw (0,0) circle (0.6);
\fill[black] (20:5.1) circle(0.1);
\draw[dashed] (-0,0) -- (20:5);
\draw (24:0.6) -- (24:4);
\draw (16:0.6) -- (16:4);
\draw (24:4) -- (16:4);

\draw (3,0)-- (27:7);
\draw (3,0) -- (-27:7);
\draw  (4,0) arc(0:43:1);

\draw  (20:5.1) node[anchor=north west]{$w$} ;
\draw    (-26:6) node [anchor=south west] {$\tilde{\mathcal{C}}_{\bar{\alpha}, \varphi}$};
\draw (3,0) node[anchor = north]{$\bar{\alpha}$};
\draw (4,0) node [anchor=south west]  {$\varphi$};
\draw (1,1.5) node{$q^s w , s\in\tilde{\mathcal{D}}_w$};
\end{tikzpicture}
\end{center}
\caption{The contours in Definition \ref{defcontour1}. One sees that $q^sw, s\in \tilde{\mathcal{D}}_w$ stays to the left of $\tilde{\mathcal{C}}_{\bar{\alpha}, \varphi}$. }
\label{figurecontours1}
\end{figure}
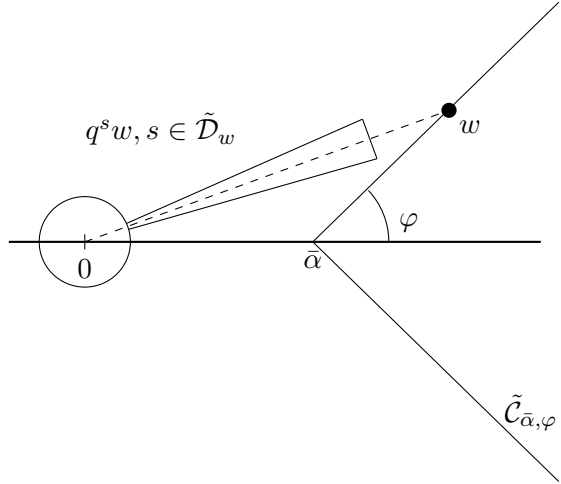

\begin{theorem}
\label{theofredholmdet}
Let $\zeta\in\C\setminus \R_+$ and $N\geqslant \max_{1\leqslant j\leqslant m}\lbrace i_j\rbrace$. Then, 
\begin{equation}
\EE\left[\frac{1}{(\zeta q^{X_N(t)+N} ; q)_{\infty}}\right] = \det (I+\tilde{K}_{\zeta})_{\mathbb{L}^2(\tilde{\mathcal{C}}_{ \bar{\alpha},\varphi})}
\label{equationfredholmdet}
\end{equation}
for any $\varphi\in (0,  \pi/4)$ provided the condition $0<\bar{\alpha}<\alpha$ is satisfied. The operator $\tilde{K}_{\zeta}$ is defined by its integral kernel 
\begin{equation*}
\tilde{K}_{\zeta}(w,w') = \frac{1}{2i\pi} \int_{\tilde{\mathcal{D}}_w} \mathrm{d} s \Gamma(-s)\Gamma(1+s) \left(-\zeta  \right)^s g_{w,w'}(q^s), 
\end{equation*}
where 
\begin{equation*}
g_{w,w'}(q^s) = \frac{\exp(tw(q^s-1))}{q^sw - w'} \left(\frac{(q^s w ; q)_{\infty}}{( w ; q)_{\infty}} \right)^{N-m} \prod_{1\leqslant j\leqslant m} \left(\frac{( q^s w/a_{i_j} ; q)_{\infty}}{( w/a_{i_j} ; q)_{\infty}} \right).
\end{equation*}
\end{theorem}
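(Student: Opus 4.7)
The plan is to derive the formula from the general $q$-moment theory for $q$-TASEP with arbitrary jump rates, then convert to a Fredholm determinant by Mellin--Barnes summation, following the scheme used in \cite[Theorem 4.13]{borodin2012free} for equal rates and adjusting the bookkeeping for the $m$ exceptional rates $a_{i_j}$.

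First I would invoke the nested-contour integral formula for $q$-moments of $q$-TASEP with general rates, established via the Macdonald process / duality machinery of \cite{borodin2011macdonald, borodin2012duality}:
\begin{equation*}
\EE\bigl[q^{k(X_N(t)+N)}\bigr] = \frac{q^{k(k-1)/2}}{(2i\pi)^k}\oint\!\cdots\!\oint \prod_{1\le A<B\le k}\frac{z_A-z_B}{z_A-qz_B}\prod_{j=1}^k \prod_{i=1}^N \frac{1}{1-z_j/a_i}\,e^{(q-1)tz_j}\,\frac{dz_j}{z_j},
\end{equation*}
where each $z_j$-contour is a small positively oriented circle around $0$ of radius less than $\alpha$, nested so that $qz_{j+1},\ldots,qz_k$ lie outside the $z_j$-contour. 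The hypothesis $N\ge \max_j i_j$ is precisely what ensures that every slower particle is accounted for in the product over $1\le i\le N$.

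Next I would sum these moments against $\zeta^k/(q;q)_k$. The left-hand side of \eqref{equationfredholmdet} is the resulting generating function, by the $q$-exponential identity $1/(\zeta q^x;q)_\infty = \sum_{k\ge 0}\zeta^k q^{kx}/(q;q)_k$. On the right-hand side one has to (i) deform the nested $z_j$-contours onto a common contour $\tilde{\mathcal{C}}_{\bar\alpha,\varphi}$, tracking residues at $z_A = qz_B$; (ii) recognize the combinatorial factors plus residues as a $k!$ times a Cauchy-type determinant $\det[1/(q^{s_A}w_A-w_B)]_{A,B}$; and (iii) apply the Mellin--Barnes identity
\begin{equation*}
\sum_{k\ge 0}\frac{\zeta^k}{(q;q)_k}F(q^k) = \frac{-1}{2i\pi}\int_{\tilde{\mathcal{D}}_w}\Gamma(-s)\Gamma(1+s)(-\zeta)^s F(q^s)\,ds,
\end{equation*}
whose contour encircles the nonnegative integers, to replace the sum over $k$ by the $s$-integration in the kernel. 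This assembles the Fredholm expansion of $\det(I+\tilde{K}_\zeta)$ with exactly the stated kernel, the product of $q$-Pochhammers naturally splitting as
\begin{equation*}
\prod_{i=1}^N\frac{(q^sw/a_i;q)_\infty}{(w/a_i;q)_\infty} = \left(\frac{(q^sw;q)_\infty}{(w;q)_\infty}\right)^{N-m}\prod_{j=1}^m\frac{(q^sw/a_{i_j};q)_\infty}{(w/a_{i_j};q)_\infty}.
\end{equation*}

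The main obstacle is the analytic justification of these contour manipulations. One must verify that the $w$-contour can be chosen with $\bar\alpha<\alpha$ so that it stays to the left of every singularity $w=a_{i_j}$ and $w=1$ of the kernel while still enclosing $0$; that $\tilde{\mathcal{D}}_w$ satisfies conditions (i) and (ii) of Definition \ref{defcontour1}, which together guarantee that $\exp(tw(q^s-1))$ decays along the vertical sections and that $q^sw$ does not cross $\tilde{\mathcal{C}}_{\bar\alpha,\varphi}$ during the deformation; and that $\tilde{K}_\zeta$ is trace class on $\mathbb{L}^2(\tilde{\mathcal{C}}_{\bar\alpha,\varphi})$ with an absolutely convergent Fredholm expansion. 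For equal rates this is done in \cite{borodin2012free}; the only new singularities here are the finitely many poles at $w=a_{i_j}$, and since these all lie strictly to the right of $\bar\alpha$, the estimates of Borodin--Corwin--Ferrari transfer to the present setting without essential modification.
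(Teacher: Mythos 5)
Your strategy (moment formulas for general rates, $q$-exponential generating series, unnesting plus Mellin--Barnes) is in substance the proof of \cite[Theorem 4.13]{borodin2012free}, whereas the paper does none of this work: it identifies $(X_k(\tau)+k)_{1\leqslant k\leqslant N}$ in law with the marginal $(\lambda_k^k)_{1\leqslant k\leqslant N}$ of the $q$-Whittaker process under the specialization $(a_1,\dots,a_N;\rho_\tau)$ and then quotes Theorem 4.13 of \cite{borodin2012free}, which is already stated for arbitrary positive parameters $a_1,\dots,a_N$; the hypothesis $N\geqslant\max_j i_j$ merely ensures the $m$ exceptional rates occur among $a_1,\dots,a_N$, giving the stated splitting of the product of $q$-Pochhammer symbols. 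So a full re-derivation is not needed, only this identification; your plan is a legitimate alternative but only if its inputs are quoted correctly, and this is where it breaks.

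The nested-contour moment formula you start from is false as written. The correct formula (from \cite{borodin2011macdonald,borodin2012duality}) carries a prefactor $(-1)^k q^{k(k-1)/2}$ and requires the $z_A$-contour to encircle all the poles $z_A=a_i$, $1\leqslant i\leqslant N$, together with $\{qz_B\}_{B>A}$, while excluding $0$. With your prescription (small circles around $0$ of radius less than $\alpha$, hence enclosing none of the $a_i$) the integral loses all $t$-dependence: already for $k=1$ and a single particle of rate $a$ one has $\EE\bigl[q^{X_1(t)+1}\bigr]=e^{(q-1)at}$ since $X_1(t)+1$ is Poisson of mean $at$, whereas your integral equals the residue at $z=0$, namely $1$, for every $t$. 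The factor $\exp\bigl(tw(q^s-1)\bigr)$ in the kernel arises precisely from the residues at the $a_i$, so this is not a normalization issue but a wrong contour choice, and every subsequent step (unnesting, clustering, Mellin--Barnes) would be performed on the wrong object. A smaller slip of the same kind: the Mellin--Barnes identity should read $\sum_{n\geqslant 0}h(q^n)\zeta^n$ on the left, without the $1/(q;q)_k$, since the $q$-Pochhammer denominators are consumed earlier when the residue clusters coming from the poles $z_A=qz_B$ are resummed. Once these inputs are corrected, the analytic points you list (trace class, conditions (i)--(ii) of Definition \ref{defcontour1}, insensitivity to the finitely many poles at $w=a_{i_j}$ lying to the right of $\bar{\alpha}$) are indeed exactly those checked in \cite{borodin2012free}, and your argument would reduce to re-proving the theorem the paper simply cites.
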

\begin{rem}
The condition $0<\bar{\alpha}<\alpha$  ensures that  all the poles for the variable $w$ in $g_{w,w'}(q^s) $ are inside the contour $\tilde{\mathcal{C}}_{ \bar{\alpha},\varphi}$.
\end{rem}
\begin{proof}
Let us  explain how this theorem is a rephrasing of a known result on Macdonald processes.
The (ascending) Macdonald processes introduced in \cite{borodin2011macdonald} are a family of probability measures on sequences of integer partitions $\lambda^1, \lambda^2, \dots, \lambda^N$,  
 where $\lambda^i$ has at most $i$ non-zero components, and the sequence satisfies the  interlacing condition $ \lambda_{j+1}^k\leqslant \lambda_j^{k-1} \leqslant \lambda_j^k$, for all $1\leqslant j\leqslant k-1<N$. The Macdonald measures are a family of measures on integer partitions such that the marginals of Macdonald processes are Macdonald measures cf \cite[paragraph 2.2.2]{borodin2011macdonald}.

The probability of a given configuration is expressed as a product of Macdonald functions, which are symmetric functions in infinitely many variables, such that the coefficient of each monomial is a rational function in two parameters $q$ and $t$. In order to build a genuine positive measure, one has to properly specialize Macdonald functions and consider $q$ and $t$ as real parameters. 
 
 Different choices for the parameters $q$ and $t$ are examined in \cite{borodin2011macdonald}.   When $t=0$, the study of the dynamics preserving the Macdonald process leads to the definition of the $q$-TASEP. Indeed, under a  specialization of the Macdonald process with parameters $(a_1, \dots ,a_N ; \rho_{\tau})$ where $\rho_{\tau}$ is the Plancherel specialization of parameter $\tau$, the marginal $(\lambda_k^k)_{1\leqslant k \leqslant N}$ has Markovian dynamics which is exactly that of the $q$-TASEP. 
More precisely, $(X_k(\tau)+k)_{1\leqslant k\leqslant N}$ has the same distribution as the marginal $(\lambda_k^k)_{1\leqslant k \leqslant N}$ for any time $\tau$. 
  The theorem is now a reformulation of Theorem 4.13 in \cite{borodin2012free} which proves the same formula for $\EE\left[1/(\zeta q^{\lambda_N^N} ; q)_{\infty}\right]$ when $\lambda^N$ is distributed according to the Macdonald measure under specialization $(a_1, \dots ,a_N ; \rho_{\tau})$.
  
\end{proof}
We make the change of variables: 
\begin{equation*}
w=q^W, \ \ \ w' = q^W, \ \ s+W=Z.
\end{equation*}
This demands to introduce new integration contours for the variables $Z$, $W$ and $W'$, depicted in Figure \ref{figurecontours2}.
Let $\bar{A} = \log_q(\alpha)$ and let $\mathcal{C}_{\bar{A}, \varphi}$ be the image of $\tilde{\mathcal{C}}_{\bar{\alpha}, \varphi}$ under the map $x\mapsto \log_q x$. 
For every $W\in \mathcal{C}_{\bar{A}, \varphi}$, let  $\mathcal{D}_W = \lbrace\bar{A}+\sigma +i\R \rbrace \cup \mathcal{E}_1 \cup \dots \cup \mathcal{E}_{k_W}$, the value of $\sigma>0 $ to be chosen later\footnote{Note that the real number $\sigma$ that we use here has nothing to do with the standard deviation $\sigma$ in Equation \ref{defsigma}, but we allow this abuse of notations to keep the same notations as in \cite{ferrari2013tracy}},  where $\mathcal{E}_1, \dots, \mathcal{E}_{k_W}$  are  small circles around the residues coming from the sine at $W+1, W+2, \dots, W+k_W$.

More precisely, the vertical line is modified in a neighbourhood of size $\delta$ around the real axis as in Figure \ref{figurecontours2}, and we choose $\sigma $ such that the poles coming from the sine  inverse are at a distance from the vertical line at least $\sigma/2$. To make this possible, the vertical lines of the contour are chosen to have real part $ \bar{A} +\sigma$ or $ \bar{A} +2\sigma$.
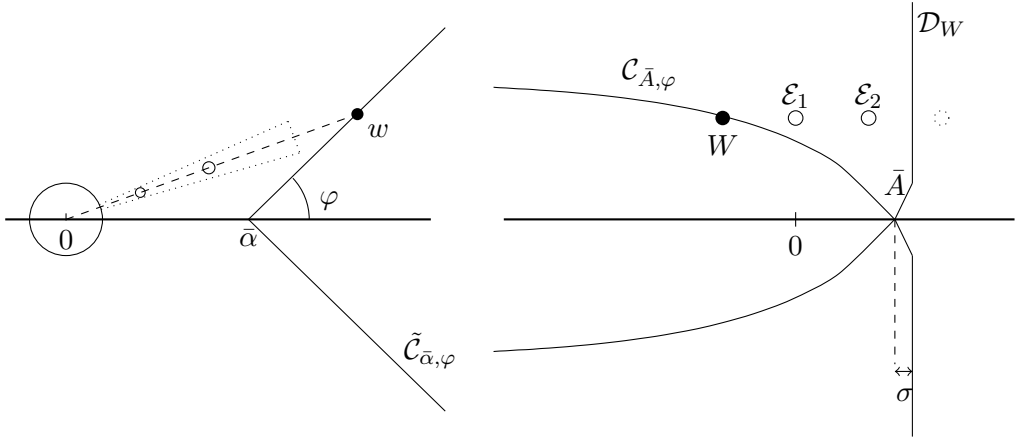
\begin{figure}
\begin{tikzpicture}[scale=0.8]
\begin{scope}[ scale=1]
\draw[thick] (-1, 0) -- (6,0);
\draw (0,0.1) -- (0,-0.02) node[anchor=north]{$0$};
\draw (0,0) circle (0.6);
\fill[black] (20:5.1) circle(0.1);
\draw[dashed] (-0,0) -- (20:5);
\draw[dotted] (24:0.6) -- (24:4);
\draw[dotted] (16:0.6) -- (16:4);
\draw[dotted] (24:4) -- (16:4);
\draw (20: 1.3) circle(0.08);
\draw (20: 2.5) circle(0.1);

\draw (3,0)-- (27:7);
\draw (3,0) -- (-27:7);
\draw  (4,0) arc(0:43:1);

\draw  (20:5.1) node[anchor=north west]{$w$} ;
\draw    (-26:6) node [anchor=south west] {$\tilde{\mathcal{C}}_{\bar{\alpha}, \varphi}$};
\draw (3,0) node[anchor = north]{$\bar{\alpha}$};
\draw (4,0) node [anchor=south west]  {$\varphi$};
\end{scope}
\begin{scope}[xshift=12cm, scale=1.2]
\draw[thick] (-4, 0) -- (3,0);
\draw (0,0.1) -- (0,-0.1) node[anchor=north]{$0$};
\draw[scale=1, domain=0:8, smooth, variable=\t] plot({1/2*ln((0.5+cos(43 r)*\t )^2 +(sin(43 r)*\t)^2)/ln(0.6)},{2/ln(0.6)*pi/180*atan(sin(43 r)*\t/(0.5+cos(43 r)*\t+sqrt((0.5+cos(43 r)*\t )^2 +(sin(43 r)*\t)^2) )) });
\draw[scale=1, domain=0:8, smooth, variable=\t] plot({1/2*ln((0.5+cos(43 r)*\t )^2 +(sin(43 r)*\t)^2)/ln(0.6)},{-2/ln(0.6)*pi/180*atan(sin(43 r)*\t/(0.5+cos(43 r)*\t+sqrt((0.5+cos(43 r)*\t )^2 +(sin(43 r)*\t)^2) )) });
\fill (-1, 1.4) circle(0.1);
\draw (0, 1.4) circle(0.1);
\draw (1, 1.4) circle(0.1);
\draw[dotted] (2, 1.4) circle(0.1);
\draw (1.36,0) -- (1.6,0.5);
\draw (1.36,0) -- (1.6,-0.5);
\draw (1.6, 0.5) -- (1.6, 3);
\draw (1.6,- 0.5) -- (1.6, -3);
\draw[dashed] (1.36,0) -- (1.36,-2);
\draw[<->](1.36,-2.1) -- (1.6, -2.1);

\draw (2,2.7) node{$\mathcal{D}_W $};
\draw (-1, 1.3) node[anchor = north] {$W$};
\draw (0, 1.4) node[anchor = south] {$\mathcal{E}_1$};
\draw (1, 1.4) node[anchor = south] {$\mathcal{E}_2$};
\draw (1.36,0.2) node[anchor=south] {$\bar{A}$};
\draw (-2, 2) node{$\mathcal{C}_{\bar{A}, \varphi} $};
\draw (1.5, -2.2) node[anchor=north] {$\sigma$};
\end{scope}
\end{tikzpicture}
\caption{The contours for the variables $Z$, $W$ and $W'$ are on the right-hand-side. The left-hand-side is the image by the map $Z\mapsto q^Z$. In this example, $k_W=2$.}
\label{figurecontours2}
\end{figure}

We obtain, as in \cite{ferrari2013tracy}, the kernel
\begin{multline*}
\hat{K}_{\zeta}(W,W') = \frac{q^W \log q}{2i\pi} \int_{\mathcal{D}_W} \frac{\mathrm{d}Z}{q^Z - q^{W'}} \frac{\pi}{\sin(\pi(W-Z))}\\
\frac{(-\zeta)^Z \exp(\tau q^Z + (N-m) \log(q^Z; q)_{\infty} +\sum_{j=1}^m \log(q^Z/a_{i_j}; q)_{\infty} )}{(-\zeta)^W \exp(\tau q^W + (N-m) \log(q^W; q)_{\infty})+\sum_{j=1}^m \log(q^W/a_{i_j}; q)_{\infty} )} .
\end{multline*}

\subsection{Case $\alpha > q^{\theta}$, Tracy-Widom fluctuations.}

Fix $ \theta >0$ such that  $\alpha > q^{\theta}$ and $c\in\R$. We want to study the limit of 
\begin{equation}
\mathbb{P}\left(\frac{X_N(\tau(N,c)) - p(N,c)}{\chi^{1/3}/(\log q) N^{1/3} } <x \right). 
\label{equationprobarecherchee}
\end{equation}
The function $ z\mapsto 1/(z ;q)_{\infty}$ converges uniformly as an infinite product for $z\in]-\infty, 0]$. Thus when  $z$ goes to $-\infty$, then $1/(z ;q)_{\infty}$ goes to zero, and when $z$ goes to zero then $1/(z ;q)_{\infty}$ goes to $1$. Modulo a justification of the exchange between expectation and limit that we explicit in the end of this subsection,  if
\begin{equation*}
\zeta = -q^{-p(N,c)-N-\frac{\chi^{1/3}}{\log q}xN^{1/3}} 
\end{equation*}
then (\ref{equationprobarecherchee}) and $\mathbb{E}\left[1/(\zeta q^{X_N(\tau)} ; q)_{\infty} \right]$ have the same limit. Thus, in the following of this subsection, we set $\zeta $ as above.
We fix also $\bar{\alpha}=q^{\theta}$ (or equivalently $\bar{A}=\theta$). As we assume $\alpha > q^{\theta}$, the condition $0<\bar{\alpha}<\alpha$ in Theorem \ref{theofredholmdet} is satisfied. 
We then obtain
$\det (I+\tilde{K}_{\zeta})_{\mathbb{L}^2(\tilde{\mathcal{C}}_{ \bar{\alpha},\varphi})} = \det(I+K_x)_{\mathbb{L}^2(\mathcal{C}_{ \theta,\varphi})}$ where 
\begin{multline}
K_x(W,W')= \\\frac{q^W \log q}{2i\pi} \int_{\mathcal{D}_W} \frac{\mathrm{d}Z}{q^Z - q^{W'}}\frac{\pi}{\sin(\pi(Z-W))} \frac{\exp(Nf_0(Z)+N^{2/3} f_1(Z) + N^{1/3}f_2(Z))}{\exp(Nf_0(W)+N^{2/3} f_1(W) +N^{1/3}f_2(W))}\frac{\phi(Z)}{\phi(W)},
\label{equationexpressionkernel}
\end{multline}
with
\begin{eqnarray*}
f_0(Z) &=& - f (\log q)Z  + \kappa q^Z + \log(q^Z; q)_{\infty},\\
f_1(Z) &=& -c (\log q) Z + cq^{Z-\theta},\\ 
f_2(Z) &=& c^2 \frac{(\log q)^4}{4\chi} Z-\chi^{1/3} x Z,\\
\phi(Z)&=& \frac{\prod_{j=1}^m (q^Z/a_{i_j} ; q)_{\infty}}{\left((q^Z ; q)_{\infty}\right)^m}.
\end{eqnarray*}

Let us describe the idea of Laplace's method in our context. The asymptotic behaviour of the kernel is governed by the variations of the real part of $f_0$. In the sequel, we exhibit steep-descent contours, which allows us to prove that in the large $N$ limit, the main contribution to the Fredholm determinant is localized in a neighbourhood of $\theta$ which is the critical point of $ \Re[f_0]$. Then, using estimates and Taylor expansions for the argument of the exponential inside the kernel, we prove the limit.
 Due to the difficulty to simultaneously find a steep-descent path for the contour of the Fredholm determinant and to control the extra residues coming from the sine inverse in formula (\ref{equationexpressionkernel}), the authors in \cite{ferrari2013tracy} impose a technical condition $q^{\theta} \leqslant \frac12$, suspecting that it was purely technical (see Remark 2.5).
In order to get rid of this condition, we do not choose exactly the same contours. Our contour for the variable $W$ is $\mathcal{C}_{\theta, \pi/4}$ instead of $\mathcal{C}_{\theta, \varphi}$ for $\varphi$ close to $\pi/2$. Indeed, for $\varphi \neq \pi/4$, the contour $\mathcal{C}_{\theta, \varphi}$ is not necessarily steep-descent for $-\Re[f_0]$ when  $q^{\theta}>1/2 $.

For later use, we give two useful series representations for $\Psi_q$ and its derivative
\begin{equation}
\Psi_q(Z) = -\log(1-q) + \log q \sum_{k=0}^{\infty} \frac{q^{Z+k}}{1-q^{Z+k}}, 
\label{equationseriespsi}
\end{equation}
\begin{equation}
\Psi'_q(Z) = (\log q)^2 \sum_{k=0}^{\infty} \frac{q^{Z+k}}{(1-q^{Z+k})^2}.
\label{equationseriespsiprime}
\end{equation}

In general, we parametrize the contour $\mathcal{C}_{\bar{A}, \varphi}$ by $W(s) = \log_q(\alpha+\vert s\vert e^{i\varphi \sgn{(s)}})$ for $s\in\R$. For instance, the contour $\mathcal{C}_{\theta, \pi/4}$ is parametrized by $W(s) = \log_q(q^{\theta}+\vert s\vert e^{\sgn{(s)}i \pi/4 })$.

\begin{lem}[\cite{ferrari2013tracy}]
\label{lemmasteepdescent1}
\begin{enumerate}
\item[1)] The two following expressions for $f_0'$ are useful:
\begin{eqnarray}
f_0'(Z) &=& \frac{\Psi'_q(\theta)}{\log q}(q^{Z-\theta} -1) + \Psi_q (\theta) -\Psi_q(Z)\label{equationf0primepsi}\\
&=&-\log q \sum_{k=0}^{\infty} \frac{q^{2k}(q^{\theta}-q^Z)^2}{(1-q^{\theta+k})^2(1-q^{Z+k})}.
\label{equationseriesf0prime}
\end{eqnarray}
\item[2)] We have that 
\begin{equation}
 \frac{\mathrm{d}}{\mathrm{d}Y}\left(\Re\left[f_0(X+iY)\right] \right) = 
  - \sin(Y \log q) \log q \sum_{k=0}^{\infty}q^{X+k} \left( \frac{1}{(1-q^{\theta + k)^2}} -\frac{1}{\vert 1 - q^{X+iY +k}\vert^2}\right ) .
\label{equationpartieimaginairef0prime}
\end{equation}
\item[3)]  The contour $\mathcal{C}_{\theta, \pi/4}$ is steep-descent for $-\Re\left[ f_0\right]$, in the sense that the function $s\mapsto  \Re\left[ f_0(W(s))\right]$ is increasing for $s\geqslant 0$, where $W(s)$ is a parametrization of the contour $\mathcal{C}_{\theta, \pi/4}$.
\item[4)]  The function $\Re\left[ f_0\right]$ is periodic on $\lbrace \theta+\sigma +i\R \rbrace$ with period $2\pi/\vert \log q\vert$.  Moreover, $t\mapsto \Re\left[ f_0(\theta+\sigma +it)\right]$ is decreasing on $[0, -\pi/\log q]$ and increasing on  $[ \pi/\log q, 0]$,
 for any $\sigma >0$.
\end{enumerate}
\end{lem}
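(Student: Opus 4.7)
Each of the four statements reduces to an explicit computation starting from $f_0(Z)=-f(\log q)Z+\kappa q^Z+\log(q^Z;q)_\infty$ and the series (\ref{equationseriespsi})--(\ref{equationseriespsiprime}). For 1), differentiating termwise gives $f_0'(Z)=-f\log q+\kappa q^Z\log q-\log q\sum_{k\geq 0}q^{Z+k}/(1-q^{Z+k})$. By (\ref{equationseriespsi}) the sum equals $(\Psi_q(Z)+\log(1-q))/\log q$, and substituting the definitions (\ref{defkappa}) of $\kappa$ and (\ref{deff}) of $f$ cancels the constants and leaves (\ref{equationf0primepsi}). Expanding this with (\ref{equationseriespsi}) and (\ref{equationseriespsiprime}) over the common denominator $(1-q^{\theta+k})^2(1-q^{Z+k})$, a short algebraic check with $a=q^{\theta+k}$, $b=q^{Z+k}$ shows the $k$-th numerator collapses to $-(a-b)^2$, giving (\ref{equationseriesf0prime}).

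For 2), split $\Re[f_0(X+iY)]=-f(\log q)X+\kappa q^X\cos(Y\log q)+\tfrac12\sum_k\log|1-q^{X+iY+k}|^2$ and differentiate in $Y$ using $|1-q^{X+iY+k}|^2=1-2q^{X+k}\cos(Y\log q)+q^{2(X+k)}$. Factoring out $\sin(Y\log q)\log q$ and identifying $\kappa q^X=\sum_k q^{X+k}/(1-q^{\theta+k})^2$ via (\ref{equationseriespsiprime}) and (\ref{defkappa}) yields (\ref{equationpartieimaginairef0prime}).

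Part 3) is the step on which the improvement over \cite{ferrari2013tracy} rests. On $\mathcal{C}_{\theta,\pi/4}$ I parametrize by $w(s)=q^{W(s)}=q^\theta+se^{i\pi/4}$ for $s\geq 0$, so $(q^\theta-q^W)^2=is^2$ is purely imaginary and $dW/ds=e^{i\pi/4}/(w\log q)$. Inserting this in (\ref{equationseriesf0prime}) and using $-i\,e^{i\pi/4}=e^{-i\pi/4}$ gives
\begin{equation*}
\frac{d}{ds}\Re[f_0(W(s))]=s^2\sum_{k\geq 0}\frac{q^{2k}}{(1-q^{\theta+k})^2}\,\Re\!\left[\frac{e^{-i\pi/4}}{w(1-q^kw)}\right].
\end{equation*}
Setting $B_k=w(1-q^kw)$ and using $\Re[e^{-i\pi/4}/B_k]=(\Re B_k-\Im B_k)/(\sqrt2\,|B_k|^2)$, a direct expansion of $B_k$ in terms of $w=q^\theta+s/\sqrt2+is/\sqrt2$ produces the clean identity $\Re B_k-\Im B_k=q^\theta(1-q^{\theta+k})+q^ks^2$, which is strictly positive for every $k\geq 0$. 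Hence each summand is non-negative and the derivative is strictly positive on $s>0$; the $s<0$ case follows from $\overline{f_0(Z)}=f_0(\bar Z)$ and $W(-s)=\overline{W(s)}$. I expect this identity to be the main technical point: the angle $\pi/4$ is exactly what makes $(q^\theta-q^W)^2$ purely imaginary and allows termwise positivity to hold for all $q\in(0,1)$, without the auxiliary condition $q^\theta\leq 1/2$ of \cite{ferrari2013tracy}.

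For 4), periodicity is immediate because the only non-periodic summand of $f_0(X+iY)$ in $Y$ is $-f(\log q)(X+iY)$, whose real part $-f(\log q)X$ is independent of $Y$, while the remaining summands depend on $Y$ only through $q^{X+iY+k}=q^{X+k}e^{iY\log q}$ of period $2\pi/|\log q|$. For monotonicity, apply (\ref{equationpartieimaginairef0prime}) with $X=\theta+\sigma$: since $\sigma>0$, $q^{X+k}<q^{\theta+k}<1$, hence $|1-q^{X+iY+k}|^2\geq(1-q^{X+k})^2>(1-q^{\theta+k})^2$ for every $Y$, so the bracketed sum is strictly positive. On $(0,-\pi/\log q)$ one has $Y\log q\in(-\pi,0)$, so $\sin(Y\log q)<0$ and $-\sin(Y\log q)\log q<0$, giving the decreasing statement; on $(\pi/\log q,0)$ the sign reverses and gives the increasing one.
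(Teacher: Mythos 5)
Your proof is correct and follows essentially the same route as the paper: parts 1) and 2) are the standard computations from Ferrari--Vet\H o that the paper simply cites, part 4) is read off from 2) exactly as done here, and your positivity identity $\Re B_k-\Im B_k=q^{\theta}(1-q^{\theta+k})+q^k s^2$ reproduces precisely the paper's displayed formula for $\frac{\mathrm{d}}{\mathrm{d}s}\Re\left[f_0(W(s))\right]$ on $\mathcal{C}_{\theta,\pi/4}$ (stated there ``after some algebra''). Nothing essential is missing; you have merely spelled out the algebra the paper leaves implicit.
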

\begin{proof}
\begin{enumerate}
Equations (\ref{equationseriesf0prime}) and (\ref{equationf0primepsi}) correspond to Equations (6.19) and (6.22) in \cite{ferrari2013tracy}.
Equation (\ref{equationpartieimaginairef0prime}) is Equation (6.24) in \cite{ferrari2013tracy} with $X=\theta+\gamma$ and $ Y=t$.
 4) follows directly from this expression.
 3) is a particular case of of Lemma 6.8 in \cite{ferrari2013tracy} and still holds when $q^{\theta} >1/2$. Indeed, after some algebra,
\begin{equation*}
\frac{\mathrm{d}}{\mathrm{d}s}\left( \Re\left[ f_0(W(s))\right]\right) =  \sum_{k=0}^{\infty} \frac{q^{2k}s^2 \sqrt{2}/2 \left(q^k s^2   + q^{\theta} (1-q^{\theta+k})\right ) }{(1-q^{\theta+k})^2\vert 1-q^{\theta+k}-e^{i\pi/4}sq^k\vert^2 \vert q^{\theta +e^{i\pi/4}s}  \vert^2} >0.
\end{equation*}
\end{enumerate}
\end{proof}
\begin{lem}[\cite{ferrari2013tracy}]
The kernel $K_x(W(s), W')$ has exponential decay, in the sense that there exist $N_0$,  $s_0 \geqslant 0$ and $c>0$ such that for all $s>s_0$ and $N>N_0$, 
\begin{equation*}
\vert K_x(W(s), W') \vert \leqslant \exp(-cNs).
\end{equation*} 
\label{lemmaexponentialdecay}
\end{lem}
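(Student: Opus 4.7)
The plan is to extract all the exponential decay from the large-$s$ behaviour of $\exp(-Nf_0(W(s)))$ in the denominator of the kernel (\ref{equationexpressionkernel}), and to verify that every other factor is dominated by this bound. Along the contour $\mathcal{C}_{\theta,\pi/4}$ parametrized by $W(s) = \log_q(q^{\theta}+\vert s\vert e^{i\pi/4\sgn(s)})$, the identity $q^{W(s)}=q^{\theta}+s\,e^{i\pi/4}$ for $s\geq 0$ makes the $\kappa q^W$ piece of $f_0$ contribute exactly $\kappa(q^{\theta}+s/\sqrt{2})$ to $\Re[f_0(W(s))]$. The remaining two terms involve $\Re W(s)\sim -\log s/\vert \log q\vert$ and $\log(q^W;q)_{\infty}$, both $O((\log s)^2)$ in real part (the series representation (\ref{equationseriespsi}) gives the bound for the Pochhammer piece). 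This refines Lemma \ref{lemmasteepdescent1}(3) to the quantitative lower bound
\[ \Re[f_0(W(s))] \geq \frac{\kappa}{\sqrt{2}}\, s - C(\log(1+s))^2 \quad \text{for all } s\geq 0. \]

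Next, the $Z$-contour $\mathcal{D}_W = (\theta+\sigma+i\R)\cup\mathcal{E}_1\cup\cdots\cup\mathcal{E}_{k_W}$ is handled piecewise. On the vertical line $\Re Z=\theta+\sigma$ is bounded, so each summand in $f_0(Z)$ stays $O(1)$ uniformly in $\Im Z$ and in $s$, giving $\Re[f_0(Z)]\leq C_1$ independent of $s,N$. On each small circle $\mathcal{E}_j$ around $W+j$, the identity $q^{W+j}=q^jq^W$ yields $\Re[\kappa q^Z]\leq \kappa q^j(q^{\theta}+s/\sqrt{2})+O(\delta)$; iterating the functional relation $(q^{Z+1};q)_{\infty}=(q^Z;q)_{\infty}/(1-q^Z)$ shows that $\Re[\log(q^Z;q)_{\infty}]-\Re[\log(q^W;q)_{\infty}]$ is only $O((\log s)^2)$ on $\mathcal{E}_j$. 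Combining with the previous step,
\[ \Re[f_0(Z)-f_0(W(s))] \leq -\frac{(1-q)\kappa}{\sqrt{2}}\, s + C(\log(1+s))^2 \leq -c_0\, s \]
for all $Z\in \mathcal{D}_{W(s)}$ and $s\geq s_0$.

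The lower-order exponential terms $N^{2/3}f_1(Z)$ and $N^{1/3}f_2(Z)$ have real parts at most linear in $s$ on both contours (since $f_1$ is controlled by a $q^{Z-\theta}$ piece analogous to $\kappa q^W$ and $f_2$ is affine in $Z$), so they contribute at most $\exp(CN^{2/3}s)$, which is dominated by $\exp(-c_0 Ns)$ for $N$ large. The rational prefactors are all benign: $\vert 1/(q^Z-q^{W'})\vert$ is bounded by our contour choice, since on the vertical part $\vert q^Z\vert = q^{\theta+\sigma}$ is well separated from $\vert q^{W'}\vert\geq q^{\theta}$ and on the circles the separation is guaranteed by taking $\delta$ small; $\vert \pi/\sin(\pi(Z-W))\vert$ is bounded on $\mathcal{D}_W$ by construction (distance $\geq\sigma/2$ from integers on the vertical part, distance $\delta$ on the circles) and provides the extra decay in $\vert\Im Z\vert$ needed for integrability; and $\vert\phi(Z)/\phi(W)\vert$ is at worst polynomial in $s$, since $\phi$ is a finite product of $q$-Pochhammer symbols whose log grows like $(\log s)^2$. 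Integrating $\vert\mathrm{d}Z\vert$ over $\mathcal{D}_W$ then yields $\vert K_x(W(s),W')\vert\leq \exp(-cNs)$ for $s\geq s_0$, $N\geq N_0$.

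The main obstacle I anticipate is ensuring uniformity in $s$ of the circle estimates, since both $k_W$ and the locations $W+1,\dots,W+k_W$ vary with $W=W(s)$. I would fix $\sigma>0$ and a circle radius $\delta>0$ once and for all independently of $W$, place the vertical part of $\mathcal{D}_W$ at real part $\theta+\sigma$ so that no pole $W+j$ ever enters a $\sigma/2$-neighbourhood of it, and note that $k_W$ grows only like $\log s$ and hence is absorbed into the polynomial prefactors.
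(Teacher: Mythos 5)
Your proposal is correct and follows essentially the same route as the paper (and the Ferrari--Vet\H o Lemma 6.10 it cites): split $\mathcal{D}_W$ into the vertical line and the small circles, reduce everything to the bound $\Re\left[f_0(Z)-f_0(W(s))\right]\leqslant -c\,s$ via the linear growth $\Re[\kappa q^{W(s)}]\sim \kappa s/\sqrt{2}$ against logarithmic corrections, and absorb the $N^{2/3}f_1$, $N^{1/3}f_2$ terms and the bounded/subexponential prefactors for $N$ large. Only cosmetic imprecisions remain (e.g.\ on $\mathcal{E}_j$ the circle error is $O(\delta s)$ rather than $O(\delta)$, harmless for $\delta$ fixed small), so the argument stands as written.
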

\begin{proof}
This is a particular case of Lemma 6.10 in \cite{ferrari2013tracy}, i.e. when $\varphi=\pi/4$. The proof consists in estimating separately the contributions of the vertical line $\theta+\sigma+i\R $ and the small circles $\mathcal{E}_1 , \dots, \mathcal{E}_{k_W}$. The factor $\exp(N(f_0(Z)-f_0(W)))$ inside the kernel commands the asymptotic behaviour. Thus the result boils down to showing that there exists a constant $c>0$ such that for $N>N_0$, $s>s_0$ and any $Z\in\mathcal{D}_W$, 
\begin{equation*}
\Re\left[f_0(Z) - f_0(W(s)) \right] < -c s.
\end{equation*}
This follows  from the properties of the function $f_0$ given in Lemma \ref{lemmasteepdescent1}. Note that this result is also a degeneration of Lemma \ref{lemmaexponentialdecay2} proved thereafter. 
\end{proof}

The previous Lemma allows to extend the contour $\mathcal{C}_{\theta, \varphi} $ with $\varphi\in (0, \pi/4) $  in Theorem \ref{theofredholmdet}   to $\varphi=\pi/4$, without altering the Fredholm determinant $\det(I + K_x)_{\mathbb{L}^2(\mathcal{C}_{\theta, \varphi})} $. Indeed, for $\varphi\in(0, \pi/4)$, it is known from the proof of Theorem 4.13 in \cite{borodin2012free} that the kernel decays exponentially on $\mathcal{C}_{\theta, \varphi}$.
For large $N$, Lemma \ref{lemmaexponentialdecay} gives an exponential bound on the kernel $K_x$ along the tails of the contour $ \mathcal{C}_{\theta, \pi/4}$, i.e. for $\vert s\vert \geqslant  s_0$.  The behaviour of the kernel around $s=0$ is logarithmic, so that for a fixed $N >N_0$ one has (cf also \cite[equation 6.28]{ferrari2013tracy}) 
\begin{equation}
\vert K_x(W(s), W(s'))\vert \leqslant C \exp(-cN\vert s\vert) + C ( \log \vert s\vert )_{-}(\log \vert s'\vert )_-.
\label{equationbornekernelintegrable}
\end{equation}
where $(x)_-$ denotes the negative part of $x$.  Hence, for any fixed $N >N_0$, each term in the series expansion of the Fredholm determinant is constant when $\varphi$ varies in $(0, \pi/4]$, yielding the validity of the contour deformation for the Fredholm determinant.

Next, we want to show that the parts of the contours which give the main contribution to the Fredholm determinant are in a neighbourhood of $\theta$. When $q^{\theta}\leqslant 1/2$ and the contour for the variables $ W$ and $ W'$ is $\mathcal{C}_{\theta, \varphi}$ with $\varphi$ close to $\pi/2$, it is proved in Proposition 6.3 of \cite{ferrari2013tracy}. In order to get rid of the condition $q^{\theta}\leqslant 1/2$, we need to control the real part of $f_0$ on the small circles $\mathcal{E}_1, \dots , \mathcal{E}_{k_W}$. This is done by the following Lemma.

\begin{lem}
There exists $\eta>0$ such that for any $W\in\mathcal{C}_{\theta, \pi/4}$, $\Re(f_0(W)-f_0(W+j))>\eta$ , for all $j=1, \dots, k_W$.
\label{lemmacontributionresidus}
\end{lem}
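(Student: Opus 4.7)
My plan is to combine the steep-descent property of $\mathcal{C}_{\theta,\pi/4}$ (Lemma \ref{lemmasteepdescent1}, part 3) with an upper bound on $\Re[f_0]$ at the shifted points $W+j$, then to promote the pointwise strict inequality to a uniform lower bound via a compactness-plus-tails argument. Direct expansion from the definition of $f_0$ gives the identity
\begin{equation*}
f_0(W) - f_0(W+j) = (1-q^j)\kappa\, q^W + jf\log q + \log(q^W;q)_j,
\end{equation*}
whose real part reads
\begin{equation*}
\Re\bigl[f_0(W)-f_0(W+j)\bigr] = (1-q^j)\kappa \Re[q^W] + jf\log q + \sum_{i=0}^{j-1}\log\lvert 1-q^{W+i}\rvert .
\end{equation*}
The constraint $1\leqslant j\leqslant k_W$ forces $\Re(W)<\theta+\sigma-1$, so provided $\sigma$ is chosen less than $1$ the parameter $s$ of $W=W(s)$ satisfies $\lvert s\rvert \geqslant s^{*}>0$, and $W$ lies strictly away from the critical point $\theta$ on the contour.

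For the tail regime ($\lvert s\rvert$ large), the parametrization $q^W=q^\theta+\lvert u\rvert e^{\pm i\pi/4}$ yields $\Re[q^W]=q^\theta+\lvert u\rvert/\sqrt{2}\to\infty$ linearly in $\lvert u\rvert\sim\lvert s\rvert$, whereas $k_W=O(\log\lvert u\rvert/\lvert\log q\rvert)$ grows only logarithmically; consequently both $\lvert jf\log q\rvert$ and $\sum_i\log\lvert 1-q^{W+i}\rvert$ are sublinear in $\lvert u\rvert$. The leading positive term $(1-q)\kappa \lvert u\rvert/\sqrt{2}$ therefore dominates and $\Re[f_0(W)-f_0(W+j)]\to+\infty$ uniformly in $j\leqslant k_W$. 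In the intermediate regime $s^{*}\leqslant \lvert s\rvert\leqslant S$ the relevant pairs $(W,j)$ (finite in number) form a compact set on which $\Re[f_0(W)-f_0(W+j)]$ is continuous; it therefore suffices to establish strict positivity pointwise and then take the infimum.

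For pointwise positivity, the steep-descent property (part 3 of Lemma \ref{lemmasteepdescent1}) gives a strict gap $\Re[f_0(W)]-\Re[f_0(\theta)]\geqslant\delta_0>0$ uniform on $\lvert s\rvert\geqslant s^{*}$. The matching upper bound on $\Re[f_0(W+j)]$ is more delicate: since $W+j$ lies strictly inside the strip $\{\Re Z<\theta+\sigma\}$ rather than on the vertical line $\{\Re Z=\theta+\sigma\}$, part 4 of Lemma \ref{lemmasteepdescent1} does not apply directly. I would extend the monotonicity into the strip either via the harmonic character of $\Re[f_0]$ (maximum principle on a rectangle whose vertical sides are controlled by part 4 and whose horizontal sides by part 3) or by a direct series estimate based on (\ref{equationseriesf0prime}) together with the elementary inequality $\lvert 1-q^{W+i}\rvert\geqslant\lvert 1-q^{\Re W+i}\rvert$. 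Choosing $\sigma$ small enough so that $\Re[f_0(W+j)]\leqslant\Re[f_0(\theta)]+C\sigma$ with $C\sigma<\delta_0$ then yields the claim. This last estimate is the main obstacle: it is precisely the failure of the steep-descent property inside the strip (when $q^\theta>1/2$) that forced the detour around the sine poles in the first place, so the bound must be sharp enough to absorb the residue contributions rather than rely on a soft monotonicity argument.
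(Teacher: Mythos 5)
Your reduction is sound in its outer structure: the identity $f_0(W)-f_0(W+j)=(1-q^j)\kappa q^W+jf\log q+\log(q^W;q)_j$ is correct, the tail estimate (linear growth of $(1-q^j)\kappa\Re[q^W]$ against logarithmic growth of $k_W$ and of the remaining terms) is fine and plays the same role as the paper's appeal to the far-from-$\theta$ bound $\Re[f_0(W(s))-f_0(W(s)+j)]>c\vert s\vert$, and the compactness step that upgrades pointwise strict positivity to a uniform $\eta$ is legitimate. But the heart of the lemma is precisely the pointwise positivity for residues coming from the bounded part of the contour, and this you do not prove — you say so yourself ("this last estimate is the main obstacle"). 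This is not a routine detail: it is exactly the difficulty created by the failure of steep descent inside the strip when $q^{\theta}>1/2$, and the paper's entire proof is devoted to it, via a path-deformation argument: when $\Re[q^{W+j}-q^{\theta}]\geqslant 0$ one shows $\Re[f_0]$ decreases along the horizontal segment from $W$ to $W+j$ using the sign of each summand in (\ref{equationseriesf0prime}) (the key point being $\arg(z')\in[\pi/4,\pi/2]$ in (\ref{equationnumeratorf0prime})); when $\Re[q^{W+j}-q^{\theta}]<0$ one instead follows the contour to $W(s_j)$, a horizontal segment to $\hat W(t_j)\in\mathcal{C}_{\theta,\pi/2}$, and a vertical segment to $W+j$, the last being monotone by the sign analysis of (\ref{equationpartieimaginairef0prime}); and the exceptional case $\Re[W+j]\in(\theta,\theta+2\sigma)$ is absorbed by taking $\sigma$ small.

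Neither of your two suggested substitutes closes this gap. The maximum-principle route fails as described: the region between $\mathcal{C}_{\theta,\pi/4}$ and the line $\theta+\sigma+i\R$ is not a rectangle, it is unbounded, and $\Re[f_0]$ tends to $+\infty$ along the contour tails, so the boundary maximum gives no useful bound on interior values; cutting the region by horizontal segments at height $\Im W$ would require controlling $\Re[f_0]$ along exactly those horizontal lines, which is the content of the paper's Case 1/Case 2 analysis, not something softer. Moreover your target inequality $\Re[f_0(W+j)]\leqslant\Re[f_0(\theta)]+C\sigma$ is strictly stronger than what is needed (the paper only proves $\Re[f_0(W+j)]<\Re[f_0(W)]$) and is nowhere justified. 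The "direct series estimate" route likewise stalls: in the intermediate regime the negative contributions $jf\log q+\sum_i\log\vert 1-q^{W+i}\vert$ and the positive term $(1-q^j)\kappa\Re[q^W]$ are of comparable size, and the crude bound $\vert 1-q^{W+i}\vert\geqslant\vert 1-q^{\Re W+i}\vert$ does not decide the sign. As it stands, the proposal reproduces the easy parts of the lemma and leaves its essential step unproved.
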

\begin{proof}
It is proved in Lemma 6.10 in \cite{ferrari2013tracy}  (in the proof thereof, more exactly) that for $W$ far enough  from $\theta$, i.e. for $W=W(s)$ with $\vert s\vert >s_0$, 
\begin{equation*}
\Re\left[f_0(W(s))-f_0(W(s)+j)\right] > c\cdot \vert s\vert
\end{equation*}
for  some $c>0$.  Thus, we can consider the residues lying only in a compact domain, and we are left to prove that $\Re\left[f_0(W)-f_0(W+j)\right]>0$ for each residue.

We split the proof into two cases according to the sign of $\Re\left[q^{W+j}-q^{\theta}\right]$, or in other words, according to the relative position of the residue $W+j$ and the contour $\mathcal{C}_{\theta, \pi/2}$. By symmetry, we can consider only the residues above the real axis.

\item[\textbf{Case 1 :  $\Re\left[q^{W+j}-q^{\theta}\right]\geqslant0$.} ] This condition geometrically means that $W+j$ lies on the left of $ \mathcal{C}_{\theta, \pi/2}$, i.e. between $\mathcal{C}_{\theta, \pi/4}$ and $\mathcal{C}_{\theta, \pi/2}$. We show that on the straight line from $ W$ to $W+j$, $\Re[f_0]$ is decreasing. For that purpose, it is enough to prove that $\Re\left[ f_0'(W+X)\right] <0$ for $X\in(0, j)$. From the expression of $f_0'$ in Lemma \ref{lemmasteepdescent1} eq. (\ref{equationseriesf0prime}),
\begin{equation*}
\Re\left[\frac{\mathrm{d}}{\mathrm{d}X}f_0(W+X)\right] = -\log q \sum_{k=0}^{\infty} \frac{q^{2k}(q^{\theta}-q^{W+X})^2\overline{(1-q^{W+X+k})}}{(1-q^{\theta+k})^2\vert1-q^{W+X+k}\vert^2}.
\end{equation*}
Writing $q^{W+X}=q^{\theta}+ z'$,  the $k$-th term in the series above has the same sign as 
\begin{equation}
\Re\left[(q^{\theta}-q^{W+X})^2\overline{(1-q^{W+X+k})}\right]= (z'^2 \overline{(1-q^k(q^{\theta}+z'))}) = z'^2 (1-q^{\theta} q^{k}) - z' \vert z'\vert ^2 q^{k}.
\label{equationnumeratorf0prime}
\end{equation}
If $\Re\left[q^{W+j}-q^{\theta}\right]\geqslant 0$, then  $\arg (z')\leqslant \pi/2$. Moreover, since $W\in\mathcal{C}_{\theta, \pi/4} $, $W+X$ is on the right of $\mathcal{C}_{\theta, \pi/4}$, which exactly means that   $\arg (z')\geqslant \pi/4$. Hence both terms in the right-hand-side of (\ref{equationnumeratorf0prime}) have negative real part.

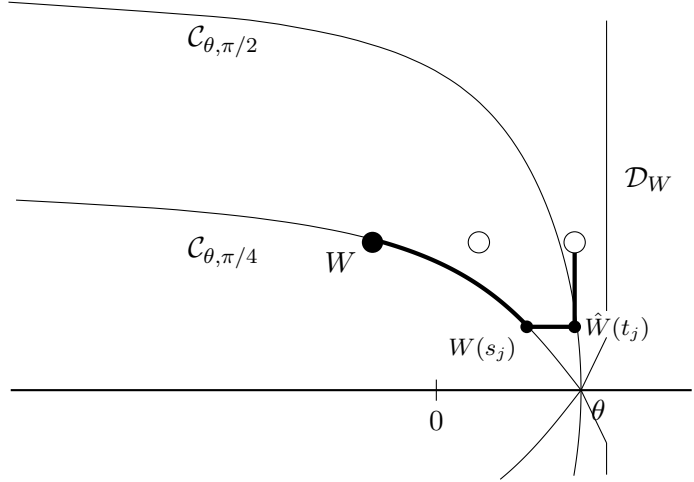
\begin{figure}
\begin{center}
\begin{tikzpicture}[scale=1.4]
\draw[thick] (-4, 0) -- (2.4,0);
\draw (0,0.1) -- (0,-0.1) node[anchor=north]{0};
\draw[scale=1, domain=0:1.385, smooth, variable=\t] plot({1.36 - tan(\t r)},{1.3*\t}); 
\draw[scale=1, domain=0:0.65, smooth, variable=\t] plot({1.36 - tan(\t r)},{-1.3*\t});

\draw[scale=1, domain=0:1.178, smooth, variable=\t] plot({1.36 - tan(\t^2 r)},{3.12*\t});
\draw[scale=1, domain=0:0.26, smooth, variable=\t] plot({1.36 - tan(\t^2 r)},{-3.12*\t});

\fill[] (-0.6, 1.4) circle(0.1);
\draw (0.4, 1.4) circle(0.1);
\draw[] (1.3, 1.4) circle(0.1);
\fill[] (1.3, 0.6) circle(0.06);
\fill[] (0.85, 0.6) circle(0.06);

\draw[ultra thick] (1.3, 1.3) -- (1.3, 0.6);
\draw[ultra thick] (1.32, 0.6) -- (0.85, 0.6);
\draw[ultra thick, scale=1, domain=0.45:1.1, smooth, variable=\t] plot({1.36 - tan(\t r)},{1.3*\t}); 

\draw (1.36,0) -- (1.6,0.5);
\draw (1.36,0) -- (1.6,-0.5);
\draw (1.6, 0.75) -- (1.6, 3.5);
\draw (1.6,- 0.5) -- (1.6, -0.8);

\draw (2,2) node{$\mathcal{D}_W $};
\draw (-0.9, 1.4) node[anchor = north] {$W$};
\draw (0.85, 0.6) node[anchor = north east] {\footnotesize{$W(s_j)$}};
\draw (1.3, 0.6) node[anchor = west] {\footnotesize{$\hat{W}(t_j)$}};
\draw (1.36,0) node[anchor=north west] {$\theta$};
\draw (-2, 1.3) node{$\mathcal{C}_{\theta, \pi/4} $};
\draw (-2, 3.3) node{$\mathcal{C}_{\theta, \pi/2} $};
\end{tikzpicture}
\end{center}
\caption{The thick line is the path from $W$ to $W+j$ in Case $2$, for $j=2$.}
\label{figurecheminresidus}
\end{figure}

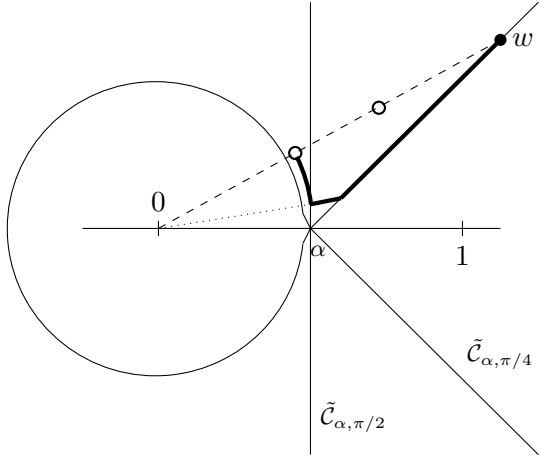
\begin{figure}
\begin{center}
\begin{tikzpicture}[scale=1]
\draw (0, -0.1) -- (0, 0.1) node[above] {$0$};
\draw (4, -0.1)node[below] {$1$} -- (4, 0.1) ;
\draw (-1,0) -- (4.5,0);
\draw (2,-3) -- (2,3) ; 
\draw (2,0) -- (5,3);
\draw (2,0) -- (5, -3);
\draw (1.9,0.2) arc (6:354:1.95);

\draw (1.9, 0.2) -- (2,0);
\draw (1.9, -0.2) -- (2,0);

\draw[dashed] (0,0) -- (4.5,2.5);
\fill (4.5, 2.5) circle (0.08);

\draw [thick] (1.8, 1) circle (0.08);
\draw [thick] (2.9, 1.6) circle (0.08);

\draw[ultra thick] (1.82,0.95) arc(26:7:2);
\draw[dotted] (0,0) -- (2,0.32);
\draw [ultra thick ]  (2, 0.32) -- (2.4, 0.4);
\draw[ultra thick]  (2.4, 0.4) -- (4.5, 2.5) node[right] {$w$};

\draw (2.1,-0.1) node[below]{\footnotesize{$\alpha$}};

\draw (4.5,-2) node[above]{\footnotesize{$\tilde{\mathcal{C}}_{\alpha, \pi/4}$}};
\draw (2,-2.5) node[right]{\footnotesize{$\tilde{\mathcal{C}}_{\alpha, \pi/2}$}};
\end{tikzpicture}
\end{center}
\caption{Image of Figure \ref{figurecheminresidus} by the map $x\mapsto q^x$.}
\label{fig:imageoffigurecheminresidus}
\end{figure}

\item[\textbf{Case 2 :  $\Re\left[q^{W+j}-q^{\theta}\right]<0$. }] This condition geometrically means that $W+j$ lies on the right of  $\mathcal{C}_{\theta, \pi/2}$. Now, it may happen that $\Re[ f_0 ]$ is not decreasing on the horizontal line between $W$ and $W+j$. The idea here, inspired from the proofs of Lemmas 6.10 and 6.12 in \cite{ferrari2013tracy}, is to find another path from $W$ to $W+j$ along which $\Re[f_0]$ is decreasing.

Let $\hat{W}(t) = \log_q(q^{\theta}+e^{i\pi/2 \sgn(t)}\vert t\vert)$ a parametrization of $\mathcal{C}_{\theta, \pi/2}$. 
Let $t_j$ be the real number such that $\Re \left[W+j\right] = \Re\left[\hat{W}(t_j)\right]$ (see Figure \ref{figurecheminresidus} and Figure \ref{fig:imageoffigurecheminresidus}). Let $s_j$ be the real such that $\Im \left[W(s_j)\right] = \Im\left[\hat{W}(t_j)\right]$. 
From $W$ to $W(s_j)$ along the contour $\mathcal{C}_{\theta, \pi/4}$, $\Re[f_0]$ is decreasing by steep-descent property of this contour stated in Lemma \ref{lemmasteepdescent1}.
From $W(s_j)$ to $\hat{W}(t_j) $ on a horizontal line, $\Re [f_0]$ is decreasing from the first part of the proof, because for any $Z$ on this line, we have $\Re\left[q^{Z}-q^{\theta}\right]\geqslant 0$. 
It remains to prove that on the vertical line from $\hat{W}(t_j)$ to $W+j$, $\Re[f_0]$ is decreasing. It is enough to prove that 
\begin{equation*}
\forall Y\in \left(0, \Im[W+j - \hat{W}(t_j)]\right), \  \frac{\mathrm{d}}{\mathrm{d}Y}\left(\Re\left[f_0(\hat{W}(t_j)+iY)\right] \right ) <0.
\end{equation*}
Each summand in the series representation for $\frac{\mathrm{d}}{\mathrm{d}Y}\left(\Re\left[f_0(X+iY)\right]\right)$ in Equation (\ref{equationpartieimaginairef0prime})  has the same sign as $\vert 1-q^{\hat{W}(t_j)+iY+k} \vert^2 - (1-q^{\theta+k})^2 $. This last quantity is positive when $\Re\left[q^{\hat{W}(t_j)+iY}-q^{\theta}\right]<0$. Taking into account the negative prefactor $-\sin(Y\log q)\log q <0 $ in Equation (\ref{equationpartieimaginairef0prime}), we conclude that $\frac{\mathrm{d}}{\mathrm{d}Y}\left(\Re\left[f_0(\hat{W}(t_j)+iY)\right] \right ) <0$.

It may also happen that  $\Re \left[W+j\right] = \theta+ \sigma'$ with $0<\sigma'< 2\sigma$, and in this case the path we have just described does not exist. But it suffices to go from $W$ to $\theta$ along $\mathcal{C}_{\theta, \pi/4}$, from $\theta$ to $\theta+ \sigma'$ by a straight horizontal line, and finally to $W+j$ by a vertical line. On the short horizontal line, $\Re [f_0]$ may increase, but the derivative is bounded, and $\sigma$ can be chosen as small as we want. Hence the possible increase of $\Re\left[f_0 \right] $ from $\theta$ to $\theta+2\sigma$ can be made arbitrarily small, which is enough to prove the Lemma.
\end{proof}
We are now able to prove the following analogue of \cite[Proposition 6.3]{ferrari2013tracy}.
\begin{prop} Asymptotically, the contribution to the Fredholm determinant of the parts of the contours outside any neighbourhood of $\theta$ is negligible. More rigorously, 
for any fixed $\delta >0$ and $\epsilon >0$, there is an $N_1$ such that for all $N>N_1$
\begin{equation*}
\vert \det(I+K_x)_{\mathbb{L}^2(\mathcal{C}_{\theta, \pi/4})} - \det(I+K_{x, \delta})_{\mathbb{L}^2(\mathcal{C}_{\theta}^{\delta})}\vert < \epsilon
\end{equation*}
 where $\mathcal{C}_{\theta}^{\delta}$ is the truncated contour $ \mathcal{C}_{\theta, \pi/4} \cap \lbrace W\ ; \ \vert W-\theta\vert \leqslant \delta \rbrace $, and 
\begin{multline}
K_{x, \delta}(W,W')= \\ 
\frac{q^W \log q}{2i\pi} \int_{\mathcal{D}_W^{\delta}} \frac{\mathrm{d}Z}{q^Z - q^{W'}}\frac{\pi}{\sin(\pi(Z-W))} \frac{\exp(Nf_0(Z)+N^{2/3} f_1(Z)+N^{1/3}f_2(Z))}{\exp(Nf_0(W)+N^{2/3} f_1(W)+N^{1/3}f_2(W))}\frac{\phi(Z)}{\phi(W)}
\label{equationmainexponentialterm}
\end{multline}
and analogously  $\mathcal{D}_{W}^{\delta} = \mathcal{D}_{W} \cap \lbrace Z\ ; \ \vert Z-\bar{A}\vert \leqslant \delta \rbrace $.
\label{propositionlocalization}
\end{prop}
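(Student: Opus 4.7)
The plan is to carry out the localization in two stages and then close the argument with Hadamard's inequality on the Fredholm expansion. First, for each fixed $W \in \mathcal{C}_{\theta, \pi/4}$, I would truncate the $Z$-contour $\mathcal{D}_W$ to $\mathcal{D}_W^\delta$. The discarded piece of the vertical line $\lbrace\bar{A}+\sigma+it\rbrace$ with $|t|>\delta$ is controlled by Lemma \ref{lemmasteepdescent1} item 4: since $\Re[f_0]$ is $2\pi/|\log q|$-periodic and strictly unimodal on one period, there exists $c_\delta>0$ with $\Re[f_0(Z)-f_0(\bar{A})]<-c_\delta$ there. The small circles $\mathcal{E}_1,\dots,\mathcal{E}_{k_W}$ are handled by Lemma \ref{lemmacontributionresidus}: the uniform bound $\Re[f_0(W)-f_0(W+j)]>\eta$ makes each residue contribution $O(e^{-\eta N/2})$ once the subleading factors $\exp(N^{2/3}f_1 + N^{1/3}f_2)$ (polynomially bounded on the truncated contour) are absorbed in the exponent for $N$ large.

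Second, I would truncate the $W$- and $W'$-contours to $\mathcal{C}_\theta^\delta$. By Lemma \ref{lemmasteepdescent1} item 3, $\mathcal{C}_{\theta,\pi/4}$ is steep-descent for $-\Re[f_0]$, so $\Re[f_0(W(s))-f_0(\theta)]>c'_\delta$ as soon as $|s|>\delta$. Combining this with Step~1 yields a uniform bound $|K_x(W(s),W(s'))|\leqslant C e^{-c''_\delta N}$ whenever $\max(|s|,|s'|)>\delta$, while near $s=0$ the kernel has only the logarithmic singularity recorded in (\ref{equationbornekernelintegrable}) and is already contained in $K_{x,\delta}$. The difference of the two Fredholm determinants can therefore be written as a sum over $n\geqslant 1$ of $n$-fold integrals in which at least one coordinate lies outside $\mathcal{C}_\theta^\delta$.

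The closing step is Hadamard's inequality applied to the $n\times n$ determinants in the Fredholm expansion, which bounds them by $n^{n/2}$ times the product of row $L^2$-norms. Using Lemma \ref{lemmaexponentialdecay} to integrate the far tails and (\ref{equationbornekernelintegrable}) to handle the logarithmic piece, one obtains a bound of the form $\sum_{n\geqslant 1}\frac{n^{n/2}}{n!}(C e^{-c''_\delta N})^n$ for the tail contribution, which tends to zero as $N\to\infty$. This gives the required $\epsilon$-closeness for $N>N_1$.

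The main obstacle is the residue contribution on the $\mathcal{E}_j$'s: the factor $\pi/\sin(\pi(Z-W))$ forces these residues into $\mathcal{D}_W$, and without a uniform sign control on $\Re[f_0(W)-f_0(W+j)]$ one cannot discard them. This is precisely where the choice $\varphi=\pi/4$ pays off, since the case analysis of Lemma \ref{lemmacontributionresidus} (splitting according to the sign of $\Re[q^{W+j}-q^\theta]$) produces a steep-descent path from $W$ to $W+j$ for every $q^\theta \in (0,1)$, thereby removing the technical restriction $q^\theta\leqslant 1/2$ used in \cite{ferrari2013tracy}.
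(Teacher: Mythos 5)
Your overall architecture (truncate the $Z$-integral, truncate the $W,W'$-contours via steep descent, discard the residue circles via Lemma \ref{lemmacontributionresidus}, then close with Hadamard's bound and dominated convergence) matches the paper's strategy, and your observation that Lemma \ref{lemmacontributionresidus} is exactly what removes the restriction $q^\theta\leqslant 1/2$ is correct. But there is a genuine gap in your first stage, precisely on the vertical part of $\mathcal{D}_W$. You claim that Lemma \ref{lemmasteepdescent1}, item 4, gives a constant $c_\delta>0$ with $\Re[f_0(Z)-f_0(\bar A)]<-c_\delta$ on the whole discarded piece $\lbrace\bar A+\sigma+it,\ |t|>\delta\rbrace$. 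That is false, and item 4 itself tells you why: $\Re[f_0]$ is \emph{periodic} on this line with period $2\pi/|\log q|$, so at every $t=2k\pi/\log q$, $k\neq 0$, it climbs back up to its value at $\bar A+\sigma$, which differs from $f_0(\theta)$ only by $O(\sigma^3)$. Hence there is no uniform $N$-exponential suppression of the far vertical tail; the only decay available near these periodic revival points is the $N$-independent factor $e^{-\pi|\Im Z|}$ coming from $1/\sin(\pi(Z-W))$, which handles summability over $k$ but not smallness in $N$. Consequently your final bound $\sum_{n\geqslant 1}\frac{n^{n/2}}{n!}(Ce^{-c''_\delta N})^n$ cannot be correct as stated: the contribution of the $\delta$-neighbourhoods of $\theta+2ik\pi/\log q$, $k\neq 0$, is not exponentially small.

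The paper closes exactly this hole with an extra argument you are missing: after localizing, by the unimodality on each period, to $\delta$-neighbourhoods of the points $\theta+2ik\pi/\log q$, $k\in\Z$, one shows that the integral over each such neighbourhood with $k\neq 0$ is $\mathcal{O}(N^{-1/3})$ (a saddle-point-type estimate at a point where the pole factor $1/(q^Z-q^{W'})$ is no longer singular, so the $N^{-1/3}$ scale of the neighbourhood is not compensated), and that these contributions are summable in $k$ thanks to the sine decay. With this replacement your stage one works, and the resulting error is only polynomially small in $N$ rather than exponentially small — which is still enough for the $\epsilon$-closeness asserted in the Proposition. The rest of your argument (steep descent in $s$ for the $W$-truncation, residues controlled by $\Re[f_0(W)-f_0(W+j)]>\eta$, Hadamard plus dominated convergence) is in line with the paper, up to the minor caveat that the exponential decay of $K_x(W(s),W(s'))$ is in the first argument only, so in the Fredholm expansion you should bound each row through the variable appearing as first argument rather than invoking smallness whenever $\max(|s|,|s'|)>\delta$.
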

\begin{proof}
This Proposition is the precise adaptation of Proposition 6.3 in \cite{ferrari2013tracy} and we reproduce the proof done therein. We have the Fredholm determinant expansion
\begin{equation}
\det(I + K_x)_{\mathbb{L}^2(\mathcal{C}_{\theta, \pi/4})} = \sum_{k=0}^{\infty} \frac{1}{k!} \int_{\R}\mathrm{d}s_1 \dots \int_{\R}\mathrm{d}s_k \det\left(K_x\left(W(s_i), W(s_j) \right)_{1\leqslant i, j \leqslant k}\right) \frac{\mathrm{d}W(s_i)}{\mathrm{d}s_i} .
\label{equationfredholmexpansion}
\end{equation}
Let us denote by $s_{\delta}$ the positive real number such that $\vert W(s_{\delta}) -\theta\vert =\delta $. We need to prove that if we replace all the integrations on $\R$ in (\ref{equationfredholmexpansion}) by integrations on $[-s_{\delta}, s_{\delta}]$, the error that we make goes to zero when $N$ goes to infinity. We give a dominated convergence argument. Note that the integrable bound in equation (\ref{equationbornekernelintegrable}) is not useful here since this bound is valid for a fixed $N$.

By Lemma \ref{lemmaexponentialdecay} and Lemma \ref{lemmacontributionresidus} together with the steep-descent properties of the contours, one can find a constant $c_{\delta}>0$ such that for any $N>N_0$ and $\vert s \vert > s_{\delta}$, 
\begin{equation*}
\Re\left[f_0(Z) - f_0(W(s)) \right]<-c_{\delta} s.
\end{equation*}
Furthermore the integral in (\ref{equationmainexponentialterm}) is absolutely integrable. For the vertical part of the contour $\mathcal{D}_W $, this is due to the exponential decay of the sine in the denominator. Thus, one can find another positive constant $ C_{\delta}$ such that for $\vert s \vert > s_{\delta}$  and $N>N_0$, one has
\begin{equation}
\vert K_x(W(s), W')\vert < C_{\delta} \exp\left(-\frac{c_{\delta}}{2}N s\right). 
\end{equation}
Hence, when  $N \geqslant N_0$ the series expansion of the error term, that is the expression in (\ref{equationfredholmexpansion}) with integrations on $\R^k\setminus [-s_{\delta}, s_{\delta}]^k $, can be uniformly bounded by a convergent series of absolutely convergent integrals.
Thus, by dominated convergence, the error goes to zero.

To conclude the proof of the Proposition, we have to localize the $Z$ integrals on $\mathcal{D}_W^{\delta} $, and we outline the arguments of \cite{ferrari2013tracy}. The behaviour in the $Z$ variables is $e^{-\pi \Im[Z]} $ due to the sine in the denominator. Hence by the steep-descent property of the contour for $Z$ on each period,  and the same kind of dominated convergence arguments, one can localize the $Z$ integrals in neighbourhoods of size $\delta$ around each $\theta + i2k\pi/\log q $ for $k\in \Z$. Moreover one can show that the contribution of the integrals on these $\delta$-neighbourhoods is $\mathcal{O}(N^{-1/3})$ as soon as $k\neq 0$, and summable on $k$.  
\end{proof}

We make the change of variables
\begin{equation*}
W=\theta + wN^{-1/3}, W'=\theta + w'N^{-1/3}, Z=\theta + zN^{-1/3}.
\end{equation*}
In order to adapt the rest of the asymptotic analysis made in \cite{ferrari2013tracy}, we need the following estimate on the behaviour of our additional factor inside the kernel.
\begin{lem}
For any $z$ and $w$, we have
\begin{equation}
\frac{\phi(\theta + zN^{-1/3})}{\phi(\theta + wN^{-1/3})} \underset{N\to\infty}{\longrightarrow} 1.
\label{equationlimitphi}
\end{equation}
Moreover, there exist constants $c_{\phi}, C_{\phi}>0$ such that for $\vert Z-\theta \vert < c_{\phi}$ and $\vert W-\theta \vert < c_{\phi} $, one has 
\begin{equation}
\left| \frac{\phi(Z)}{\phi(W)}\right| \leqslant C_{\phi}.
\label{equationboundphi}
\end{equation}
\label{lemmapointwiselimit1}
\end{lem}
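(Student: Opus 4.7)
The plan is to show that $\phi$ extends to a holomorphic, nowhere-vanishing function in a small complex neighborhood of $\theta$; both claims then follow immediately from continuity and compactness. Recall that in this subsection we have assumed $\alpha > q^\theta$, so every slow-particle rate satisfies $a_{i_j} \geq \alpha > q^\theta$, and hence $|q^Z/a_{i_j}| < 1$ whenever $Z$ lies close enough to $\theta$; the same obviously holds for $|q^Z|$.

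For each $j$, the factor $Z \mapsto (q^Z/a_{i_j};q)_\infty = \prod_{k \geq 0}(1 - q^{Z+k}/a_{i_j})$ is entire in $Z$, since the sum $\sum_{k \geq 0} |q^{Z+k}/a_{i_j}|$ converges uniformly on compact sets while the partial products are holomorphic. The same argument applies to $(q^Z;q)_\infty$. Therefore $\phi(Z)$ is meromorphic in a neighborhood of $\theta$, with poles only at points where some factor $(1 - q^{Z+k})$ in the denominator vanishes. Since $0 < q^{\theta+k} < 1$ for every $k \geq 0$, no such factor vanishes at $\theta$, so $(q^\theta;q)_\infty > 0$ and $\phi$ is holomorphic at $\theta$. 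Likewise, $q^\theta/a_{i_j} \in (0,1)$ implies $(q^\theta/a_{i_j};q)_\infty > 0$ for each $j$, so $\phi(\theta) > 0$ and in particular $\phi(\theta) \neq 0$.

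The pointwise limit (\ref{equationlimitphi}) is then immediate: by continuity, both $\phi(\theta + z N^{-1/3})$ and $\phi(\theta + w N^{-1/3})$ converge to $\phi(\theta) \neq 0$ as $N \to \infty$, so the ratio tends to $1$. For the uniform bound (\ref{equationboundphi}), choose $c_\phi > 0$ small enough that $\phi$ is holomorphic and nonvanishing on the closed disk $\overline{D}(\theta, c_\phi)$. By compactness, $|\phi|$ attains a finite maximum $M$ and a strictly positive minimum $m$ on this disk, and $C_\phi := M/m$ does the job. There is no genuine obstacle here; the only care required is in establishing uniform convergence of the infinite products defining $\phi$ on a complex neighborhood of $\theta$, which is precisely where the hypothesis $\alpha > q^\theta$ intervenes.
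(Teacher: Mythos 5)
Your proof is correct and takes essentially the same route as the paper: continuity and nonvanishing of the $q$-Pochhammer factors at $\theta$ (which, to be precise, is where the hypothesis $\alpha>q^{\theta}$ really enters --- the infinite products converge locally uniformly regardless), giving the pointwise limit, together with uniform upper and lower bounds on a small disk around $\theta$, giving $C_{\phi}$. The paper argues factor by factor, exchanging limit and infinite product and bounding $\vert q^{Z+k}/a_{i_j}\vert$ away from $1$, rather than packaging the argument as holomorphy plus compactness, but the substance is identical.
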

\begin{proof}
The infinite product $(z ; q)_{\infty}$ converges uniformly on any disk centred in 0. Here for all $1\leqslant j \leqslant m$, $q^{\theta}<\alpha\leqslant a_{i_j}$. Thus, each factor tends to a non null real number, and one can  exchange limit and infinite product. The limit does neither depend on $z$ nor on $w$, and one has
\begin{equation}
\frac{(q^{\theta + zN^{-1/3}}/a_{i_j} ; q)_{\infty}}{\left((q^{\theta + zN^{-1/3}} ; q)_{\infty}\right)} \underset{N\to\infty}{\longrightarrow} \frac{(q^{\theta}/a_{i_j} ; q)_{\infty}}{\left((q^{\theta} ; q)_{\infty}\right)}.
\label{equationlimitperturbation}
\end{equation}
The factors in $\phi(\theta + zN^{-1/3})$ and $\phi(\theta + wN^{-1/3})$ compensate in the limit,  and $\frac{\phi(\theta + zN^{-1/3})}{\phi(\theta + wN^{-1/3})} \underset{N\to\infty}{\longrightarrow} 1$.
  
 Assuming $\vert Z-\theta \vert < c_{\phi}$ and $\vert W-\theta \vert < c_{\phi} $  where  $c_{\phi} $ is chosen small enough,   $\vert q^{Z+k}/a_{i_j}\vert$ is uniformly bounded by a constant smaller than $1$. Hence $\phi(Z)$ and $\phi(W)$ are uniformly bounded above and below by positive constants, and one can find a constant $C_{\phi}$ so that (\ref{equationboundphi}) holds.
\end{proof}

Due to the change of variables, we define new integration contours which we choose as straight lines for simplicity. 
For $L\in\R_+\cup\lbrace\infty \rbrace$, the contours $\mathcal{C}_{\varphi, L}$ and $\mathcal{D}_{\varphi, L}$ are  adapted from  \cite{ferrari2013tracy} and defined in the following way:  $\mathcal{C}_{\varphi, L} = \lbrace  e^{i (\pi-\varphi)\sgn(y)}\vert y \vert, \vert y \vert\leqslant L\rbrace $ for some angle $\varphi<\pi/4$. 
Analogously we define $\mathcal{D}_{\varphi, L}=\lbrace  e^{i\varphi \sgn(y)}\vert y\vert , \vert y \vert\leqslant L \rbrace $.  This modification of contours can be performed without changing the value of the integral as soon as we keep the same endpoints, and the angle $\varphi$ and the parameter $\sigma$ can be chosen so that it is the case.

\begin{prop} We have the convergence
\begin{equation*}
\lim_{N\to\infty}\det(I+K_x)_{\mathbb{L}^2(\mathcal{C}_{\theta, \pi/4})} =\det(I+K'_{x, \infty})_{\mathbb{L}^2(\mathcal{C}_{\varphi, \infty})}
\end{equation*}
 where for $L\in\R_+\cup \lbrace +\infty\rbrace$,  
\begin{equation}
K'_{x, L} = \frac{1}{2i\pi} \int_{\mathcal{D}_{\varphi, L}} \frac{\mathrm{d}z}{(z-w')(w-z)}\frac{\exp(\chi z^3/3 + c (\log q )^2 z^2/2 + z c^2 (\log q)^4/(4\chi)  -z x\chi^{1/3} )}{\exp(\chi w^3/3 + c (\log q )^2 w^2/2 + w c^2 (\log q)^4/(4\chi) - w x\chi^{1/3} )}.
\label{equationmainexponentialtermlimit}
\end{equation}

\label{propositionlimitfredholm}
\end{prop}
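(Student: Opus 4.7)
The plan is to use the localization provided by Proposition \ref{propositionlocalization}, to perform the natural cubic rescaling $W=\theta+wN^{-1/3}$, $W'=\theta+w'N^{-1/3}$, $Z=\theta+zN^{-1/3}$ around the triple critical point $\theta$ of $f_0$, to identify the pointwise limit of the rescaled kernel by Taylor expansion, and to pass to the Fredholm determinant by dominated convergence on the series expansion.

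After the truncation supplied by Proposition \ref{propositionlocalization}, it suffices to analyze the rescaled kernel
\[
K_x^{(N)}(w,w')\, :=\, N^{-1/3} K_{x,\delta}\bigl(\theta+wN^{-1/3},\,\theta+w'N^{-1/3}\bigr)
\]
on the image of $\mathcal{C}_\theta^\delta$ under $W\mapsto N^{1/3}(W-\theta)$. Since $\log q<0$, the local Jacobian rotates the chevron of $\mathcal{C}_{\theta,\pi/4}$ by $\pi$, so this image is close to a chevron pointing left. A Cauchy deformation---no poles of the integrand separate the two curves when $\varphi$ is chosen slightly less than $\pi/4$---replaces it by the straight-line chevron $\mathcal{C}_{\varphi,\delta N^{1/3}}$, and the analogous deformation for the $z$-contour replaces the rescaled image of $\mathcal{D}_W^\delta$ by $\mathcal{D}_{\varphi,\delta N^{1/3}}$.

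The pointwise limit then follows by Taylor expansion. One checks $f_0(\theta)=f_0'(\theta)=f_0''(\theta)=0$ and $f_0'''(\theta)=\Psi'_q(\theta)\log q-\Psi''_q(\theta)=2\chi$, which gives
\[
N\bigl(f_0(\theta+zN^{-1/3})-f_0(\theta)\bigr)\longrightarrow \chi z^3/3.
\]
Similarly $f_1'(\theta)=0$ and $f_1''(\theta)=c(\log q)^2$ give $N^{2/3}\bigl(f_1(\theta+zN^{-1/3})-f_1(\theta)\bigr)\to c(\log q)^2 z^2/2$, while $f_2$, being linear, contributes exactly $\bigl(c^2(\log q)^4/(4\chi)-\chi^{1/3}x\bigr)z$ at scale $N^{1/3}$. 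The prefactor $\frac{q^W\log q\cdot dZ}{q^Z-q^{W'}}\cdot\frac{\pi}{\sin(\pi(Z-W))}$ collapses to $\frac{dz}{(z-w)(z-w')}$ once the Jacobian $N^{-1/3}$ from $dW$ is absorbed into $K_x^{(N)}$, and Lemma \ref{lemmapointwiselimit1} supplies $\phi(Z)/\phi(W)\to 1$. Assembled, these limits yield $K_x^{(N)}(w,w')\to K'_{x,\infty}(w,w')$ pointwise.

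To lift this to convergence of Fredholm determinants, the crux is a uniform bound $|K_x^{(N)}(w,w')|\leqslant C e^{-c|w|}$ on the rescaled contour, valid for all $N$ large. For $|w|$ bounded it follows from the Taylor expansion together with the bound (\ref{equationboundphi}) on $\phi(Z)/\phi(W)$; for $|w|$ in the macroscopic regime up to $\delta N^{1/3}$ it follows from the steep-descent estimates of Lemmas \ref{lemmasteepdescent1}, \ref{lemmaexponentialdecay} and \ref{lemmacontributionresidus}, which in fact yield a much stronger decay in the original variable. Once this bound is established, Hadamard's inequality majorizes the $k$-th term of the Fredholm expansion by $k^{k/2}\bigl(C\int e^{-c|w|}|dw|\bigr)^k/k!$, giving a summable dominant; termwise dominated convergence then delivers $\det(I+K_x^{(N)})\to\det(I+K'_{x,\infty})$, and the same bound gives $\det(I+K'_{x,L})\to\det(I+K'_{x,\infty})$ as $L\to\infty$. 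The main obstacle is precisely the construction of this uniform bound, bridging the local cubic control coming from the Taylor expansion near $w=0$ with the global steep-descent estimates on a contour whose length $\delta N^{1/3}$ grows with $N$. A secondary subtlety, which dictates the choice of contours, is to keep $|z-w|$ bounded below so that the $\pi/\sin(\pi(Z-W))$ singularity is controlled—this is why $\mathcal{C}_{\varphi,L}$ and $\mathcal{D}_{\varphi,L}$ are placed on opposite sides of the origin at angles $\pi-\varphi$ and $\varphi$.
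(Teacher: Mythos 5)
Your proposal follows essentially the same route as the paper's proof: rescaling $W=\theta+wN^{-1/3}$ (and similarly $W',Z$) around the critical point, Taylor expansion of $f_0,f_1,f_2$ together with Lemma \ref{lemmapointwiselimit1} for $\phi(Z)/\phi(W)\to 1$ to get the pointwise limit of the rescaled kernel, a uniform decay bound on the contour of length $\delta N^{1/3}$ feeding into Hadamard's inequality and dominated convergence for the Fredholm series, and a final dominated-convergence step to pass from $L=\delta N^{1/3}$ to $L=\infty$. The only imprecision is the attribution of the uniform bound in the regime $1\ll |w|\leqslant \delta N^{1/3}$ to the steep-descent Lemmas \ref{lemmasteepdescent1}, \ref{lemmaexponentialdecay}, \ref{lemmacontributionresidus}: those control the portion of the contour already discarded in Proposition \ref{propositionlocalization}, whereas on the truncated contour the needed (cubic-exponential) decay comes from the Taylor estimate itself with $\delta$ small, the quartic error $C_{f_0}\delta|w|^3$ being absorbed by the cubic main term together with the bound (\ref{equationboundphi}) on $\phi(Z)/\phi(W)$ --- which is exactly how the paper proceeds, so the argument goes through unchanged.
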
 
\begin{proof}
For the sake of self-containedness, we reproduce the proofs of Propositions 6.4 to 6.6 of \cite{ferrari2013tracy} which still hold with a slight modification for the pointwise limit. 

Consider the rescaled kernel 
\begin{equation*}
K_{x, \delta}^N(w,w') = N^{-1/3} K_{x, \delta N^{1/3}}(\theta + w N^{-1/3}, \theta + w' N^{-1/3})
\end{equation*}
where we use the contours $\mathcal{C}_{\varphi, \delta N^{1/3}}$ and $\mathcal{D}_{\varphi, \delta N^{1/3}}$.
By a simple change of variables, 
\begin{equation*}
\det(I+K_{x, \delta})_{\mathbb{L}^2(\mathcal{C}_{\theta}^{\delta})} =  \det(I+K_{x, \delta}^N)_{\mathbb{L}^2(\mathcal{C}_{\varphi, \delta N^{1/2}})}.
\end{equation*}
First we estimate the argument in the exponential  in (\ref{equationmainexponentialterm}). By Taylor approximation, there exists $C_{f_0}$, such that for $\vert Z-\theta\vert <\theta $,
\begin{equation}
\left| f_0(Z) - f_0(\theta) - \frac{\chi}{3}(Z-\theta)^3 \right| < C_{f_0} \vert Z-\theta\vert^4
\label{equationtaylorexpansionf0}
\end{equation}
and since $f_1'(\theta)=0$ and $f_1''(\theta) = c(\log q)^2 $, there exists $C_{f_1}$, such that for $\vert Z-\theta\vert <\theta $,
\begin{equation*}
\left| f_1(Z) - f_1(\theta) - c(\log q)^2 (Z-\theta)^2 \right|< C_{f_1}\vert Z-\theta \vert^3.
\end{equation*}
Let us denote the argument in the exponential  in (\ref{equationmainexponentialterm}) as 
\begin{equation*}
f(Z,W,N) := Nf_0(Z) + N^{2/3}f_1(Z) + N^{1/3} f_2(Z) - Nf_0(W) - N^{2/3}f_1(W)-  N^{1/3} f_2(W), 
\end{equation*}
and the argument in the exponential in  (\ref{equationmainexponentialtermlimit}) as 
\begin{multline*}
f^{\mathrm{lim}}(z,w):= \left(\chi z^3/3 + c (\log q )^2 z^2/2 + c^2 (\log q)^4/(4\chi) z -x\chi^{1/3} z\right) \\-\left( \chi w^3/3 + c (\log q )^2 w^2/2 + c^2 (\log q)^4/(4\chi) w -x\chi^{1/3} w\right).
\end{multline*}
Using the Taylor approximations above and rescaling the variables, one has that for $w\in \mathcal{C}_{\varphi, \delta N^{1/3}}$,  $z\in\mathcal{D}_{\varphi, \delta N^{1/3}}$, and $Z=\theta + zN^{-1/3}, W=\theta + wN^{-1/3} $, 
\begin{eqnarray}
\left|f(Z,W,N) - f^{\mathrm{lim}}(z,w)\right| &<&N^{-1/3}\left(C_{f_0} (\vert z\vert^4+\vert w\vert^4)+C_{f_1} (\vert z\vert^3+\vert w\vert^3) \right) \label{equationestimeeexponentielle1}\\
& \leqslant & \delta \left(C_{f_0} (\vert z\vert^3+\vert w\vert^3)+C_{f_1} (\vert z\vert^2+\vert w\vert^2) \right).\label{equationestimeeexponentielle2}
\end{eqnarray}
Now we estimate the remaining factors in the integrand in (\ref{equationmainexponentialterm}). Let us denote
\begin{equation*}
F(Z,W,W'):= \frac{N^{-1/3}}{q^Z-q^{W'}} \frac{\pi}{\sin(\pi(Z-W))}\frac{\phi(Z)}{\phi(W)}, 
\end{equation*}
and the remaining factors in the integrand in (\ref{equationmainexponentialtermlimit}) as 
\begin{equation*}
F^{\mathrm{lim}}(z,w,w'):= \frac{1}{z-w'}\frac{1}{z-w}.
\end{equation*}
Let us prove that for any $w,w' \in \mathcal{C}_N$, $K_{x, \delta}^N(w,w') - K'_{x,\delta N^{1/3}}(w,w') $ goes to zero when $N$ goes to infinity. Indeed, the error can be estimated by 
\begin{multline}
\vert K_{x, \delta}^N(w,w') - K'_{x,\delta N^{1/3}}(w,w') \vert < \int_{\mathcal{D}_N}\mathrm{d}z \exp(f^{\mathrm{lim}}) \vert F(Z, W, W' )\vert  \left| \exp(f - f^{\mathrm{lim}})-1\right| \\
+ \int_{\mathcal{D}_N}\mathrm{d}z \exp(f^{\mathrm{lim}}) \left|F - F^{\mathrm{lim}} \right|, 
\label{equationestimeeerreur}
\end{multline}
where we have omitted the arguments of the functions $f(Z, W, N)$, $f^{\mathrm{lim}}(z,w)$, $F(Z,W,W')$, $F^{\mathrm{lim}}(z,w,w')$, with $Z=\theta + zN^{-1/3}$ as before, and likewise for $W, W'$. By estimates  (\ref{equationestimeeexponentielle1}) and (\ref{equationestimeeexponentielle2}) and the inequality $\vert \exp(x) - 1\vert \leqslant \vert x\vert \exp(\vert x\vert)$, we have 
\begin{equation*}
\vert \exp(f - f^{\mathrm{lim}})-1\vert  < N^{-1/3} P(\vert z\vert, \vert w \vert) \exp\left(\delta \left(C_{f_0} (\vert z\vert^3+\vert w\vert^3)+C_{f_1} (\vert z\vert^2+\vert w\vert^2) \right) \right), 
\end{equation*}
where $P$ is the polynomial $P(X,Y) = C_{f_0} (X^4+Y^4)+C_{f_1} (X^3+Y^3)$. Hence, for $\delta$ small enough, 
$$\exp(f^{\mathrm{lim}}) \vert \exp(f - f^{\mathrm{lim}})-1\vert $$
has cubic exponential decay in $\vert z\vert$ when $z$ goes to infinity along the contour $\mathcal{D}_{\infty}$. Hence  
the first integral in (\ref{equationestimeeerreur}) goes to zero as $N$ goes to infinity by dominated convergence. The second integral in   (\ref{equationestimeeerreur}) also goes to zero by dominated convergence since one can bound
\begin{multline*}
\left| F(\theta +zN^{-1/3}, \theta +wN^{-1/3}, \theta +w'N^{-1/3})- F^{\mathrm{lim}}(z,w,w')\right| < \\ N^{-1/3} Q(\vert z \vert, \vert w \vert, \vert w' \vert) F^{\mathrm{lim}}(z,w,w'), 
\end{multline*}
for some polynomial $Q$.

In order to prove that the difference of Fredholm determinants goes to zero as well, one could show that the difference of operators $K_{x, \delta}^N$ and  $K'_{x,\delta N^{1/3}}$ acting on $\mathbb{L}^2(\mathcal{C}_{\infty}) $ goes to zero in trace-class norm, but we give a simpler dominated convergence argument instead.
The estimates in right-hand-sides of Equations (\ref{equationestimeeexponentielle2}) and (\ref{equationboundphi}) show that $K_{x, \delta}^N$ has cubic exponential decay. More precisely, there exists a constant $C>0$ independent of $N$ such that for all $w, w'\in\mathcal{C}_{\varphi, \delta N^{1/3}}$,
 \begin{equation*}
\vert K_{x, \delta}^N (w,w') \vert < C  \exp\left(f^{\mathrm{lim}}(0,w)+C_{f_0}\delta \vert w \vert^3  + C_{f_1}\delta \vert w\vert^2\right).
 \end{equation*}
 Hence for $\delta$ small enough, Hadamard's bound yields
\begin{equation*}
\left|\det\left(K_{x, \delta}^N (w_i,w_j)_{1\leqslant i,j\leqslant n} \right ) \right| \leqslant n^{n/2} C^n \prod_{i=1}^n e^{\chi/6\Re \left[w_i^3\right]}.
\end{equation*}
It follows that  the Fredholm determinant expansion,
\begin{equation*}
\det(I+K_{x, \delta}^N)_{\mathbb{L}^2(\mathcal{C}_{\varphi, \delta N^{1/3}})} = \sum_{n=0}^{\infty} \frac{1}{n!} \int_{\mathcal{C}_{\varphi, \delta N^{1/3}}}\mathrm{d}w_1 \dots \int_{\mathcal{C}_{\varphi, \delta N^{1/3}}}\mathrm{d}w_n \det\left(K_{x, \delta}^N (w_i,w_j)_{1\leqslant i,j\leqslant n} \right ),
\end{equation*}
is absolutely integrable and summable. Thus, by dominated convergence, 
\begin{equation*}
\lim_{N\to\infty}\det(I+K_x)_{\mathbb{L}^2(\mathcal{C}_{\theta, \pi/4})} =\lim_{N\to\infty}\det(I+K'_{x,\delta N^{1/3}})_{\mathbb{L}^2(\mathcal{C}_{\varphi, \delta N^{1/3}})}.
\end{equation*}

Since the integrand in $K'_{x,\delta N^{1/3}}$ has cubic exponential decay along the contours $\mathcal{C}_{\infty}$ and  $\mathcal{D}_{\infty}$, dominated convergence, again,  yields 
\begin{equation*}
\det(I+ K_{x})_{\mathbb{L}^2(\mathcal{C}_{\theta, \pi/4 })} \underset{N\to\infty}{\longrightarrow} \det(I+ K'_{x,\infty})_{\mathbb{L}^2(\mathcal{C}_\infty)}.
\end{equation*}
\end{proof}

Now we explain how the limit of the $q$-Laplace transform characterizes the limit law of the rescaled position of particles. The sequence of functions  $E_N(y):= 1/(-q^{-y N^{1/3}} ; q)_{\infty}$ is such that for any $N >0$, $E_N(y)$ is strictly decreasing with limit $1$ when $y$ goes to $-\infty$, and with limit $0$ when $y$ goes to $+\infty$. Additionally, for each $\varepsilon >0$, $E_N$ converges uniformly to $\mathds{1}_{y\leqslant 0}$ on $\R\setminus [-\varepsilon, \varepsilon]$. Using Lemma 4.39 in \cite{borodin2011macdonald} to replace $E_N$ by its limit and with our choice of $\zeta$, 
\begin{eqnarray*}
\lim_{N\to\infty }\mathbb{P}(\xi_N <x) &=& \lim_{N\to\infty}\EE\left[E_N\left(\frac{\chi^{1/3}}{\vert\log q\vert} (\xi_N-x)\right) \right]\\
 &=& \lim_{N\to\infty} \EE\left[\frac{1}{(\zeta q^{X_N(t)+N} ;  q)_{\infty}}\right]\\
 & =& \det(I+K'_{x, \infty})_{\mathbb{L}^2(\mathcal{C}_{\varphi, \infty})}. 
\end{eqnarray*}

Finally, using a classical reformulation of the kernel (see \cite[Lemma 8.7]{borodin2012free} ) to get the Fredholm determinant of an operator acting on $ \mathbb{L}^2(\R_+)$, and after the change of variables $z \leftarrow \chi^{1/3} ( z +c(\log q)^2 /(2\chi) )$ and likewise for $w$ and $w'$, 
\begin{equation*}
\det(I+K'_{x, \infty})_{\mathbb{L}^2(\mathcal{C}_{\varphi, \infty})} = \det(I-K_{\mathrm{Ai}})_{\mathbb{L}^2(x, +\infty)}
\end{equation*}
and  we conclude that
\begin{equation*}
\lim_{N\to\infty}\mathbb{P}(\xi_N <x)=F_{\rm GUE}(x).
\end{equation*}

\subsection{Case $\alpha = q^{\theta}$, critical value.}
\label{sectioncriticalcase}
The function $\phi$ introduces a pole of order $k$ in $A=\theta$ in the kernel $K_x$, for the variable $W$. The contour  of $W$ must enclose this pole, and thus $\mathcal{C}_{\bar{A}, \varphi}$ has to pass on the right of $\theta$.
The contour can be chosen as in the previous section, except for a modification (e.g. a small circle of radius $(\epsilon/2) N^{-1/3}$ centred at $\theta$) in a $N^{-1/3}$-neighbourhood of $\theta$. In order to stay on the right of $\mathcal{C}_{\bar{A}, \varphi} $, the contour $\mathcal{D}_W$ can be simply shifted to the right by $\epsilon N^{-1/3}$. In order to adapt the arguments of the case $\alpha > q^{\theta} $, 
we only need  the pointwise limit and a uniform bound in a neighbourhood of $\theta$
for the factor $\phi(Z)/\phi(W)$ introduced in the kernel.
\begin{lem}
For any $z$ and $w$, we have
\begin{equation*}
\frac{\phi(\theta + zN^{-1/3})}{\phi(\theta + wN^{-1/3})} 
 \underset{N\to\infty}{\longrightarrow} \left(\frac{z}{w}\right)^k.
\end{equation*}
Moreover, there exist constants $c_{\phi}', C_{\phi}'>0$  such that for  $\vert Z-\theta\vert < c_{\phi}' $ and $\vert W-\theta\vert < c_{\phi}' $, one has 
\begin{equation}
\left| \frac{\phi(Z)}{\phi(W)} \right| < C_{\phi}' \left|\frac{Z-\theta}{W-\theta}\right|^k.
\label{equationboundphicritical}
\end{equation}
\label{lemmapointwiselimitcritical}
\end{lem}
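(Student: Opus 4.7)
The plan is to isolate the vanishing factors in $\phi(Z)$ at $Z=\theta$. Since $k$ of the rates $a_{i_j}$ equal $\alpha=q^{\theta}$, exactly $k$ of the factors in the numerator of $\phi$ become $(q^{Z-\theta};q)_{\infty}$, and each of these vanishes at $Z=\theta$ because its first factor $1-q^{Z-\theta}$ does. The other $m-k$ factors $(q^Z/a_{i_j};q)_{\infty}$ (with $a_{i_j}>\alpha$), and the denominator $((q^Z;q)_{\infty})^m$, are continuous and non-vanishing on a neighbourhood of $\theta$.

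Concretely, I would write $1-q^{Z-\theta}=-(Z-\theta)\,h(Z)$ where $h(Z)=\log q+\mathcal{O}(Z-\theta)$ is holomorphic with $h(\theta)=\log q\neq 0$, and factor
\begin{equation*}
(q^{Z-\theta};q)_{\infty}=-(Z-\theta)\,h(Z)\,(q^{Z-\theta+1};q)_{\infty}.
\end{equation*}
Collecting the $k$ critical factors yields $\phi(Z)=(Z-\theta)^k\,\tilde{\phi}(Z)$ with
\begin{equation*}
\tilde{\phi}(Z)=(-1)^k\,\bigl[h(Z)\,(q^{Z-\theta+1};q)_{\infty}\bigr]^k\,\frac{\prod_{j=k+1}^m(q^Z/a_{i_j};q)_{\infty}}{((q^Z;q)_{\infty})^m},
\end{equation*}
which is continuous and non-vanishing near $\theta$; uniform convergence of the infinite products on compact sets (already invoked in the proof of Lemma \ref{lemmapointwiselimit1}) justifies exchanging limit and product and guarantees $\tilde{\phi}$ is well-defined.

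For the pointwise limit, the substitution $Z=\theta+zN^{-1/3}$, $W=\theta+wN^{-1/3}$ gives $(Z-\theta)^k/(W-\theta)^k=(z/w)^k$ exactly, while $\tilde{\phi}(Z)/\tilde{\phi}(W)\to \tilde{\phi}(\theta)/\tilde{\phi}(\theta)=1$ by continuity of $\tilde{\phi}$ at $\theta$, yielding the claimed limit $(z/w)^k$.

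For the uniform bound, since $\tilde{\phi}$ is continuous and non-zero at $\theta$, there exists $c_{\phi}'>0$ so that $\tilde{\phi}$ is bounded above and below by positive constants on the closed disk $\{|Z-\theta|\leqslant c_{\phi}'\}$. Then for $|Z-\theta|<c_{\phi}'$ and $|W-\theta|<c_{\phi}'$,
\begin{equation*}
\left|\frac{\phi(Z)}{\phi(W)}\right|=\left|\frac{Z-\theta}{W-\theta}\right|^k\left|\frac{\tilde{\phi}(Z)}{\tilde{\phi}(W)}\right|\leqslant C_{\phi}'\left|\frac{Z-\theta}{W-\theta}\right|^k
\end{equation*}
with $C_{\phi}':=\sup|\tilde{\phi}|/\inf|\tilde{\phi}|$ taken over that disk. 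There is no real obstacle here; the only subtle point is verifying that the remaining infinite products $(q^{Z-\theta+1};q)_{\infty}$, $(q^Z/a_{i_j};q)_{\infty}$ for $j>k$, and $(q^Z;q)_{\infty}$ are indeed non-vanishing for $Z$ close to $\theta$, which follows because none of their factors $1-q^{Z-\theta+\ell}$ (for $\ell\geqslant 1$), $1-q^{Z+\ell}/a_{i_j}$, or $1-q^{Z+\ell}$ ($\ell\geqslant 0$) can vanish when $Z$ is near $\theta>0$ and $a_{i_j}>q^{\theta}$.
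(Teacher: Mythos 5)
Your proposal is correct and follows essentially the same route as the paper: the paper likewise extracts from each of the $k$ critical factors $(q^{Z-\theta};q)_{\infty}$ the vanishing first factor $1-q^{Z-\theta}\sim -(Z-\theta)\log q$ (stated there as the asymptotic $(q^{zN^{-1/3}};q)_{\infty}\sim(-\log q)zN^{-1/3}(q;q)_{\infty}$ and, for the bound, as the two-sided boundedness of $u\mapsto \vert(1-q^u)/u\vert$ near $0$), while treating the factors with $a_{i_j}>q^{\theta}$ exactly as in Lemma \ref{lemmapointwiselimit1}. Your explicit factorization $\phi(Z)=(Z-\theta)^k\tilde{\phi}(Z)$ is just a slightly more systematic packaging of the same argument, so no gap to report.
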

\begin{proof}
For $j$ such that $a_{i_j}>q^{\theta}$, the limit in (\ref{equationlimitperturbation}) still holds. We are left with the $k$ factors for which $a_{i_j}=\alpha$. In this case
\begin{equation}
\frac{(q^{\theta + zN^{-1/3}}/\alpha ; q)_{\infty}}{(q^{\theta + zN^{-1/3}} ; q)_{\infty}} = 
\frac{(q^{zN^{-1/3}} ; q)_{\infty}}{(q^{\theta+ zN^{-1/3}} ; q)_{\infty}} 
\underset{N\to\infty}{\sim}  \frac{(-\log q) zN^{-1/3}(q;q)_{\infty}}{(q^{\theta}; q)_{\infty}}\label{equationestimatephicritical}. 
\end{equation}
The $N^{-1/3}$ and constant factors in $\phi(Z)$ and $\phi(W)$ compensate in the limit, and we get the result.

Let us prove the bound (\ref{equationboundphicritical}). For $j$ such that $a_{i_j}>q^{\theta}$ the factors $\frac{(q^{\theta + zN^{-1/3}}/a_{i_j}; q)_{\infty}}{(q^{\theta + zN^{-1/3}} ; q)_{\infty}}$ are bounded as in Lemma \ref{lemmapointwiselimit1}. For the factors for which $a_{i_j}=\alpha$, we use the fact that the function $u\mapsto \vert (1-q^u)/u \vert $ is bounded above and below by positive constants on some disc centred in $0$ of positive radius $r$. Choosing $c_{\phi}' \leqslant r$ and small enough so that Lemma \ref{lemmapointwiselimit1} applies, one gets the result.
\end{proof}
With this Lemma, the local modification of the paths has no influence on any of the bounds given previously for large $w$ and $z$.
Only the pointwise limit of the modified kernel is slightly different and given by the above Lemma. 
We conclude that 
\begin{equation*}
\lim_{N\to\infty} \EE\left[\frac{1}{(\zeta q^{X_N(t)+N} ; q)_{\infty}}\right] = \det(I+K'_x)_{\mathbb{L}^2(\mathcal{C}_{\varphi, \infty})}
\end{equation*}
 where 
\begin{multline}
K'_x =\frac{1}{2i\pi} \int_{\mathcal{D}_{\varphi, \infty}} \frac{\mathrm{d}z}{(z-w')(w-z)} \\ \frac{\exp(\chi z^3/3 + c (\log q )^2 z^2/2 + c^2 (\log q)^4/(4\chi) z -x\chi^{1/3} z)}{\exp(\chi w^3/3 + c (\log q )^2 w^2/2 + c^2 (\log q)^4/(4\chi) w-x\chi^{1/3} w)} \left(\frac{z}{w}\right)^k. 
\label{equationpremierkernelcascritique}
\end{multline}
The contours $\mathcal{C}_{\varphi, \infty}$ and $\mathcal{D}_{\varphi, \infty}$ are slight modifications of those defined in the previous section. Here, $\mathcal{C}_{\varphi, \infty} = \lbrace \theta + e^{i (\pi-\varphi)\sgn(y)}\vert y \vert\ ; \vert y\vert >N^{-1/3}\epsilon/2 \rbrace \cup \lbrace \epsilon/2 N^{-1/3} e^{i\gamma}\ ; \ \gamma\in[\varphi - \pi ; \pi-\varphi]\rbrace$. The contour $\mathcal{D}_{\phi, \infty}$ can be chosen as $\lbrace\epsilon N^{-1/3} + e^{i\varphi \sgn(y)}\vert y\vert, \ y\in\R \rbrace $. 

We reformulate the kernel as a Fredholm determinant acting on $ \mathbb{L}^2(\R_+)$ (see \cite[Lemma 8.7]{borodin2012free}), and after the change of variables $z \leftarrow \chi^{1/3} ( z +c(\log q)^2 /(2\chi) )$ and likewise for $w$ and $w'$, we conclude that
\begin{equation*}
\lim_{N\to\infty}\mathbb{P}(\xi_N <x) = \det(I-K''_x(w,w'))_{\mathbb{L}^2(x, +\infty)}
\end{equation*}
where 
\begin{equation*}
K''_x(u,v) = 
\frac{1}{(2i\pi)^2} \int_{e^{-2i\pi/3}\infty}^{e^{2i\pi/3}\infty} \mathrm{d}w \int_{e^{-i\pi/3}\infty}^{e^{i\pi/3}\infty} \mathrm{d}z \frac{e^{z^3/3-zu}}{e^{w^3/3-wv}}\frac{1}{z-w}\left(\frac{z-\frac{c(\log q)^2}{2\chi^{2/3}}}{w-\frac{c(\log q)^2}{2\chi^{2/3}}}\right)^k,
\end{equation*}
where the contour for $w$ passes to the right of $b:=\frac{c(\log q)^2}{2\chi^{2/3}}$, and the contours for $z$ and $w$ do not intersect. Finally, by Definition \ref{defdistributions}, 
\begin{equation*}
\lim_{N\to\infty}\mathbb{P}(\xi_N <x)=F_{\rm BBP,k, \mathbf{b}}(x),
\end{equation*}
with $\mathbf{b} = (b, \dots, b)$.
\begin{rem}
In the case where for $1\leqslant i\leqslant k$, $a_i = q^{ \theta + \tilde{b}_i N^{-1/3}}$ and the rates of all other particles are higher than $q^{\theta}$, Lemma \ref{lemmapointwiselimitcritical} still applies and  the factor $\left(z/w\right)^k$ in Equation (\ref{equationpremierkernelcascritique}) has to be replaced by $\prod_{i=1}^{k}(z-b_i)/(w-b_i)$. Then 
$\left((z-b)/(w-b) \right)^k$ with $b=c(\log q)^2\chi^{-2/3}/2$ gets replaced by $\prod_{i=1}^{k}(z-b_i)/(w-b_i)$ with $b_i = b+\tilde{b}_i$, and finally
\begin{equation*}
\lim_{N\to\infty}\mathbb{P}(\xi_N <x)=F_{\rm BBP,k, \mathbf{b}}(x), 
\end{equation*}
with $\mathbf{b} = (b_1, \dots, b_k)$.
\end{rem}

\subsection{Case $\alpha < q^{\theta}$, Gaussian fluctuations}

We start again from the result of Theorem \ref{theofredholmdet}.  One cannot use the same contour for the Fredholm determinant, because the pole for $W=A$ in $K_x(W,W') $ has to be inside the contour $\mathcal{C}_{\bar{A}, \varphi}$, which means $\bar{A}\geqslant A >\theta$. 
Let us choose 
\begin{equation*}
\zeta = -q^{-gN -cN^{1/2}- \sigma^{1/2}\frac{N^{1/2}}{\log q}}
\end{equation*}
so that 
\begin{equation*}
\lim_{N\to\infty}  \mathbb{P}\left(\frac{X_N(\tau^*(N,c)) - p^*(N,c)}{N^{1/2} \sigma^{1/2}/(\log q)  } <x \right) =  \lim_{N\to\infty} \mathbb{E}\left[1/(\zeta q^{X_N(\tau)} ; q)_{\infty} \right]
\end{equation*}
with the new macroscopic position $p^*(N,c) = (g-1)N + cN^{1/2}$.

Again, $\det (I+\tilde{K}_{\zeta})_{\mathbb{L}^2(\tilde{\mathcal{C}}_{ \bar{\alpha},\varphi})} = \det(I+K_x)_{\mathbb{L}^2(\mathcal{C}_{ \bar{A},\varphi})}$ where 
\begin{equation}
K_x(w,w')= \frac{q^W \log q}{2i\pi} \int_{\mathcal{D}_W} \frac{\mathrm{d}Z}{q^Z - q^{W'}}\frac{\pi}{\sin(\pi(Z-W))} \frac{\exp(Ng_0(Z)+N^{1/2}g_1(Z))}{\exp(Ng_0(W)+N^{1/2}g_1(W))}\frac{\phi(Z)}{\phi(W)}
\label{equationexpressionkernelgaussien}
\end{equation}
with
\begin{eqnarray*}
g_0(Z) &=& - g \log(q) Z + \kappa q^Z + \log(q^Z; q)_{\infty},\\
g_1(Z) &=& -Z \log(q) c-\sigma^{1/2} x Z + \frac{c}{\alpha} q^Z.
\end{eqnarray*}
The asymptotic behaviour  is governed by the real part of the function $g_0$. By a direct calculation and Equations (\ref{defg}) and (\ref{defkappa}), one has that 
\begin{eqnarray*}
g_0'(Z)&=& -g\log(q) + \log(q) \kappa q^Z + \sum_{k=0}^{\infty} \frac{-\log(q) q^{Z+k} }{1-q^{Z+k}}\\
&=& \frac{\Psi'_q(\theta)}{q^{\theta}\log(q)}(q^Z-\alpha)+\Psi_q(A)-\Psi_q(Z), \\
g_0''(Z) &=& \Psi'_q(\theta)q^{Z-\theta} - \Psi'_q(Z).
\end{eqnarray*} 
We see immediately that $g'_0(A)=0$, and using the series representation (\ref{equationseriespsi}) and (\ref{equationseriespsiprime}), for $A>\theta$,
\begin{equation}
 g_0''(A) = \sigma = (\log q)^2 \sum_{k=0}^{\infty}q^{A+k}\left(\frac{1}{(1-q^{\theta+k})^2} - \frac{1}{(1-q^{A+k})^2}\right) >0.
 \label{equationderiveesecondeg0}
\end{equation}

\begin{lem}
\label{lemmasteepdescent2}
\begin{enumerate}
\item  The contour $\mathcal{C}_{A, \pi/4}$ is steep-descent for $-\Re[ g_0]$ in the sense that the function $s\mapsto \Re[ g_0(W(s))]$ is increasing for $s\geqslant 0$, where $W(s)$ is a parametrization of $\mathcal{C}_{A, \pi/4}$.
\item The function $\Re \left[g_0\right]$ is periodic on $\lbrace A+\sigma +i\R \rbrace$ with period $2\pi/\vert \log q\vert$.  Moreover, $t\mapsto \Re\left[ g_0(A+\sigma +it)\right]$ is decreasing on $[0, -\pi/\log q]$ and increasing on  $[ \pi/\log q, 0]$, 
for any $\sigma >0$.
\end{enumerate}
\end{lem}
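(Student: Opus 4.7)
The plan is to mirror the proof of parts 3) and 4) of Lemma~\ref{lemmasteepdescent1}, taking care that for $g_0$ the critical point lies at $A=\log_q\alpha$ rather than at $\theta$, while $\theta$ still enters through $\kappa$. Part 2) is the easier half and I would dispatch it first.

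For part 2), the key observation is that $g_0(Z)-f_0(Z)=(f-g)\log(q)\,Z$ is linear in $Z$, so for any fixed real $X$ one has $\frac{d}{dY}\Re[g_0(X+iY)] = \frac{d}{dY}\Re[f_0(X+iY)]$. Specializing equation~(\ref{equationpartieimaginairef0prime}) to $X=A+\sigma$ gives a series with common prefactor $-\sin(t\log q)\log(q)$. Since $A+\sigma>\theta$, the bound $|1-q^{A+\sigma+it+k}|\geq 1-q^{A+\sigma+k}>1-q^{\theta+k}$ makes the bracket $\frac{1}{(1-q^{\theta+k})^2}-\frac{1}{|1-q^{A+\sigma+it+k}|^2}$ positive term by term, so the series is positive. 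The stated monotonicity on $[0,-\pi/\log q]$ and $[\pi/\log q,0]$ then reduces to tracking the sign of $-\sin(t\log q)\log q$ on each subinterval, and periodicity is immediate since $q^{A+\sigma+it+k}=q^{A+\sigma+k}e^{it\log q}$ is $2\pi/|\log q|$-periodic in $t$.

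For part 1), I would parametrize $\mathcal{C}_{A,\pi/4}$ by $W(s)$ with $q^{W(s)}=\alpha+se^{i\pi/4\,\sgn(s)}$ and, by conjugation symmetry, treat only $s>0$. Combining the series (\ref{equationseriespsi}) and (\ref{equationseriespsiprime}) with the expression for $g_0'$ displayed just above the lemma yields
\begin{equation*}
g_0'(Z) = \log(q)\,(q^Z-q^A)\sum_{k=0}^{\infty} q^k\left[\frac{1}{(1-q^{\theta+k})^2}-\frac{1}{(1-q^{A+k})(1-q^{Z+k})}\right].
\end{equation*}
Together with $W'(s)=e^{i\pi/4}/(\log(q)\,q^W)$ and $q^W-q^A=se^{i\pi/4}$, this gives $g_0'(W)W'(s) = (is/q^W)\,\Sigma(s)$ with $\Sigma(s)$ the above sum. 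The crux is then the partial fraction $\frac{1}{(1-q^ku)u}=\frac{1}{u}+\frac{q^k}{1-q^ku}$ applied with $u=q^W$: it splits $g_0'(W)W'(s)$ into a piece $is\,C/u$, where $C$ is a real $s$-independent constant, plus $-is\sum_k q^{2k}/[(1-q^{A+k})(1-q^ku)]$. Taking real parts, one has $\Re[is/u]=s^2/(\sqrt{2}\,|u|^2)>0$ and $\Im[1/(1-q^ku)]=(sq^k/\sqrt{2})/|1-q^ku|^2>0$, so both contributions are manifestly positive, proving the steep-descent property.

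The one nontrivial step is the algebraic identity
\begin{equation*}
(1-q^{A+k})-(1-q^{\theta+k})^2 = q^k(q^\theta-q^A)+q^{\theta+k}(1-q^{\theta+k}),
\end{equation*}
which exhibits the positivity of $C$ (both summands on the right are nonnegative since $\theta<A$). Once this identity is checked, no further obstacle remains, and the rest of the argument is strictly parallel to the analogous calculation for $f_0$ on $\mathcal{C}_{\theta,\pi/4}$ reproduced in the proof of Lemma~\ref{lemmasteepdescent1}.
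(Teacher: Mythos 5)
Your proof is correct. Part 2 is essentially the paper's own argument: the paper phrases the key observation as $g_0'(Z)=-\log(q)(g-f)+f_0'(Z)$ rather than noting that $g_0-f_0$ is linear, but then it likewise specializes equation~(\ref{equationpartieimaginairef0prime}) to the vertical line and reads off the sign from $\sin(t\log q)$, with the positivity of the bracket coming from $A+\sigma>\theta$ exactly as you argue. Part 1, however, is organized genuinely differently. The paper follows the Ferrari--Vet\H{o} computation: it writes $\frac{\mathrm{d}}{\mathrm{d}s}\Re[g_0(W(s))]$ as an explicit series involving $\cos\varphi$, $\cos 2\varphi$, $\cos 3\varphi$ for a general angle $\varphi\leqslant\pi/4$, then ``factors the summand'' using $q^{\theta}>\alpha$ to obtain a lower bound whose numerator $q^ks^2\cos\varphi-(1-2\alpha q^k)s\cos 2\varphi-\alpha(1-\alpha q^k)\cos 3\varphi$ is positive at $\varphi=\pi/4$. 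You instead exploit the factorized form $g_0'(Z)=\log(q)(q^Z-q^A)\sum_k q^k\bigl[(1-q^{\theta+k})^{-2}-((1-q^{A+k})(1-q^{Z+k}))^{-1}\bigr]$ (which is a correct consequence of the series (\ref{equationseriespsi})--(\ref{equationseriespsiprime})), work directly at $\varphi=\pi/4$ where $e^{2i\varphi}=i$, and use the partial fraction $\frac{1}{u(1-q^ku)}=\frac{1}{u}+\frac{q^k}{1-q^ku}$ to split the derivative into an exact sum of two manifestly positive contributions; the positivity of your constant $C$ rests on the identity $(1-q^{A+k})-(1-q^{\theta+k})^2=q^k(q^{\theta}-q^A)+q^{\theta+k}(1-q^{\theta+k})$, which is correct (both sides equal $2q^{\theta+k}-q^{A+k}-q^{2(\theta+k)}$) and uses $\theta<A$, i.e.\ $\alpha<q^{\theta}$, in the right place. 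Your route buys an exact decomposition with no dropped terms and no trigonometric bookkeeping, at the price of being tied to $\varphi=\pi/4$; the paper's route keeps a general $\varphi$ in the intermediate formula and stays closer to the $f_0$ computation of \cite{ferrari2013tracy} that it reuses elsewhere, but only a lower bound survives the factoring. Both establish the same statement, so there is no gap.
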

\begin{proof}
\begin{enumerate}
\item We assume that $\alpha<q^{\theta}$. Using the parametrization of the contour $\mathcal{C}_{A, \varphi} $ $ W(s)  = \log_q(\alpha + s e^{i\varphi})$ as before , we have
\begin{multline*}
\frac{\mathrm{d}}{\mathrm{d}s} \left( \Re\left[g_0(W(s))\right]\right) =  \sum_{k=0}^{\infty}\left(\frac{s q^k}{(1-q^{\theta+k})^2}\frac{\alpha\cos(2\varphi)+s\cos(\varphi)}{\vert \alpha+se^{i\varphi}\vert^2} + \right. \\
\left. \frac{\alpha q^k}{(1-\alpha q^k )}\frac{\alpha\cos(\varphi)+s}{\vert \alpha+se^{i\varphi}\vert^2}  -\frac{(\cos(\varphi)-(\alpha \cos(\varphi)+s)q^k)q^k}{\vert 1-(\alpha+se^{i\varphi})q^k \vert^2}\right).
\label{steepdescent1}
\end{multline*}
For $\varphi\leqslant \pi/4$, using the fact that $q^{\theta}>\alpha$, and factoring the summand, we get
\begin{multline*}
\frac{\mathrm{d}}{\mathrm{d}s} \left( \Re\left[g_0(W(s))\right]\right) > \\
 \sum_{k=0}^{\infty} \frac{q^{2k}s^2\left(q^k s^2\cos(\varphi) - (1-2\alpha q^k)s\cos(2\varphi) - \alpha(1-\alpha q^k)\cos(3\varphi)\right)}{(1-\alpha q^{k})^2\vert \alpha+se^{i\varphi}\vert^2 \vert 1-(\alpha+se^{i\varphi})q^k \vert^2}, 
\end{multline*}
which is positive for $s>0$ and $\varphi=\pi/4$.
\item Let $Z(t) = A +\sigma +it$. Notice that $g_0'(Z) = -\log(q) (g-f) + f_0'(Z)$, and $\frac{\mathrm{d}}{\mathrm{d}t} \left( \Re\left[g_0(Z(t))\right]\right) = - \Im\left[ g_0'(Z(t))\right]$. By Lemma \ref{lemmasteepdescent1}, 
\begin{equation*}
\frac{\mathrm{d}}{\mathrm{d}t} \left( \Re\left[g_0(Z(t))\right]\right) = -\sin(t \log q) \log q \sum_{k=0}^{\infty}q^{A+\sigma+k} \left( \frac{1}{(1-q^{\theta + k)^2}} -\frac{1}{\vert 1 - q^{A + \sigma + it +k}\vert^2}\right ) 
\label{steepdescent2}
\end{equation*}
has the same sign as $\sin(t \log q)$, proving the steep-descent property. 
\end{enumerate}
\end{proof}
\begin{lem}
The kernel $K_x(W(s), W')$ has exponential decay, in the sense that there exist $N_0$,  $s_0 \geqslant 0$ and $c>0$ such that for all $s>s_0$ and $N>N_0$, 
\begin{equation*}
\vert K_x(W(s), W') \vert \leqslant \exp(-cNs).
\end{equation*} 
\label{lemmaexponentialdecay2}
\end{lem}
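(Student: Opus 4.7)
The plan is to adapt the proof of Lemma \ref{lemmaexponentialdecay} to the Gaussian regime, substituting the function $g_0$ for $f_0$ and the contour $\mathcal{C}_{A, \pi/4}$ for $\mathcal{C}_{\theta, \pi/4}$. The asymptotic behavior of the integrand in (\ref{equationexpressionkernelgaussien}) is governed by the factor $\exp(N(g_0(Z)-g_0(W(s))))$: the sub-leading factor $\exp(N^{1/2}(g_1(Z)-g_1(W)))$ is of order $e^{O(N^{1/2})}$ and cannot destroy an $e^{-cNs}$ bound once $N$ is large; the ratio $\phi(Z)/\phi(W)$ is uniformly bounded in the relevant region (exactly as in Lemma \ref{lemmapointwiselimit1}, since $a_{i_j}\ge \alpha$ ensures $|q^Z/a_{i_j}|<1$ near $\mathcal{C}_{A, \pi/4}$); and the remaining algebraic factors $q^W\log q$, $1/(q^Z-q^{W'})$, and $\pi/\sin(\pi(Z-W))$ are sub-exponential and even give additional exponential decay in $|\Im Z|$ along the vertical leg of $\mathcal{D}_W$.

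The heart of the proof is therefore the steep-descent estimate
\[
\Re\bigl[g_0(Z) - g_0(W(s))\bigr] < -c\,s \qquad \text{for } Z \in \mathcal{D}_W,\ |s| > s_0.
\]
I would split $\mathcal{D}_W$ into the vertical leg $\{A+\sigma+it:t\in\R\}$ and the small loops $\mathcal{E}_1,\dots,\mathcal{E}_{k_W}$ around the residues $W+1,\dots,W+k_W$. On the vertical leg, Lemma \ref{lemmasteepdescent2}(2) bounds $\Re[g_0(Z)]$ uniformly above by $\Re[g_0(A+\sigma)]$, while the explicit series for $\tfrac{d}{ds}\Re[g_0(W(s))]$ derived in the proof of Lemma \ref{lemmasteepdescent2}(1) tends to a strictly positive constant as $s\to\infty$ (the $k$-th term's numerator grows like $q^{3k}s^4$ against a denominator of the same order, yielding a summable constant per term in the limit), so $\Re[g_0(W(s))]$ grows at least linearly in $|s|$. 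Subtracting the two estimates delivers the bound on the vertical part.

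For the residue loops I need $\Re[g_0(W(s)+j)]\le \Re[g_0(W(s))] - c\,s$ for $j\le k_W$ and $|s|$ large, which is exactly the analogue of Lemma \ref{lemmacontributionresidus}. The argument proceeds in the same spirit: construct a piecewise path from $W(s)$ to $W(s)+j$ made of an arc of $\mathcal{C}_{A, \pi/4}$, a horizontal segment, and possibly a vertical segment, choosing the geometry according to whether $W(s)+j$ lies to the left or right of the auxiliary contour $\mathcal{C}_{A, \pi/2}$, and verify on each piece that the relevant summands in $\Re[g_0']$ (or in $\tfrac{d}{dY}\Re[g_0(X+iY)]$) have the required sign, now using $A>\theta$ in place of the inequality exploited for $f_0$. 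Once this estimate is in hand, the conclusion $|K_x(W(s),W')|\le e^{-cNs}$ follows by dominating the integral along $\mathcal{D}_W$, the integrability being ensured by the exponential decay of the sine.

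The main obstacle is the residue analysis, for two reasons: first, the critical point $A$ of $g_0$ is quadratic (with $g_0''(A)=\sigma>0$), so the local geometry near $A$ differs from the cubic critical geometry at $\theta$ for $f_0$ and requires a slightly more careful sign check in the series; second, one has to handle the boundary case in which $W(s)+j$ sits close to the vertical shift $A+\sigma'+i\R$ for some small $\sigma'>0$, exactly as in the last paragraph of the proof of Lemma \ref{lemmacontributionresidus}, by routing through $A$ and absorbing the possible short increase of $\Re[g_0]$ into a parameter $\sigma$ chosen small enough. The remaining steps are then routine.
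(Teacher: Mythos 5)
Your overall architecture matches the paper's: split $\mathcal{D}_W$ into the vertical line $A+\sigma+i\R$ and the residue circles, control the vertical part by the periodicity/monotonicity of $\Re[g_0]$ there together with the fact that $\frac{\mathrm{d}}{\mathrm{d}s}\Re[g_0(W(s))]$ tends to a positive constant (the paper identifies the limit as $\kappa$), absorb $N^{1/2}g_1$, the sine decay and the remaining algebraic factors, and reduce everything to a bound $\Re[g_0(Z)-g_0(W(s))]<-cs$. That part of your plan is essentially the paper's proof.

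The gap is in your treatment of the residue circles. You propose to rerun the path-construction of Lemma \ref{lemmacontributionresidus} \emph{directly for $g_0$}, checking signs of the summands of $\Re[g_0']$ and of $\frac{\mathrm{d}}{\mathrm{d}Y}\Re[g_0(X+iY)]$, with the auxiliary contour $\mathcal{C}_{A,\pi/2}$ and ``using $A>\theta$''. As stated this does not go through: $g_0'$ does not admit the perfect-square series representation (\ref{equationseriesf0prime}) that drives all the sign checks for $f_0$ (its series mixes $\theta$, $A$ and $Z$, because the coefficient $\Psi_q'(\theta)/q^\theta$ is tied to $\theta$ while the critical point is $A$), and the correct dividing contour for the horizontal/vertical monotonicity criteria is $\mathcal{C}_{\theta,\pi/2}$, not $\mathcal{C}_{A,\pi/2}$: since $g_0(Z)-f_0(Z)=(f-g)(\log q)Z$ is linear with a \emph{real} coefficient, one has $\frac{\mathrm{d}}{\mathrm{d}Y}\Re[g_0(X+iY)]=\frac{\mathrm{d}}{\mathrm{d}Y}\Re[f_0(X+iY)]$, so the sign conditions are still governed by the comparison of $\vert 1-q^{Z+k}\vert$ with $1-q^{\theta+k}$, i.e.\ by the position of $Z$ relative to $\mathcal{C}_{\theta,\pi/2}$ (this is exactly why the paper's Lemma \ref{lemmacontributionresidusgaussien} works with $\mathcal{C}_{\theta,\pi/2}$ and the intersection point $W_A$, and needs three cases). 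The paper sidesteps all of this for the present lemma by the identity in (\ref{utilisationdef-g}): the linear part contributes $-\log(q)(f-g)j>0$, and the remaining difference $\Re[f_0(W(s))]-\Re[f_0(W(s)+j)]>c's$ for large $s$ is quoted from Ferrari--Vet\H{o}'s Lemma 6.10; note also that for this lemma only $\vert s\vert>s_0$ is needed, so a bound growing linearly in $s$ must be produced (monotone decrease along a path is not by itself enough), and your sketch does not extract that quantitative growth. Finally, your justification that $\phi(Z)/\phi(W)$ is bounded ``exactly as in Lemma \ref{lemmapointwiselimit1}'' is off in this regime: here $q^A/a_{i_j}=1$ for the slowest particles, $\phi$ vanishes at $W=A$ (this is precisely the order-$k$ pole of the kernel), and Lemma \ref{lemmapointwiselimit1} assumes $q^\theta<\alpha$; what saves you for $\vert s\vert>s_0$ is only that $\phi(Z)/\phi(W)$ grows at most polynomially in $s$ there, which is harmless against $e^{-cNs}$ but deserves its own argument.
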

\begin{proof}
This Lemma is very similar with \cite[Lemma 6.10]{ferrari2013tracy} and we adapt the proof.

We first estimate the contribution of the integration along the vertical line $A +\sigma +i\R $.  For any $\varphi\in (0, \pi/4]$, 
\begin{equation*}
\lim_{s\to+\infty} \frac{\mathrm{d}}{\mathrm{d}s} \left( \Re\left[g_0(W(s))\right]\right)=\kappa >0.
\end{equation*}
Therefore, for $s$ large enough
\begin{equation*}
\Re\left[Ng_0(W(s))  + N^{1/2}g_1(W(s))\right]  > \kappa s N /2 - N^{1/2}\sigma^{1/2}x\vert \log_q(\vert \alpha + s + is\vert)\vert/2.
\end{equation*}
 Thus, one can find $N_0$ and   $s_1 \geqslant 0$ such that for all $s>s_1$ and $N>N_0$, $\exp(-Ng_0(W(s))  - N^{1/2}g_1(W(s))   ) < \exp(-\kappa  N s /4)$. As the vertical line is at a distance at least $\sigma/2$ from the poles coming from the sine, the factor $\frac{\pi}{\sin(\pi(Z-W))} $ is bounded  by $Ce^{-\pi\Im[Z]}$ for some constant $C>0$. The remaining factors in the integrand are bounded for $W\in\mathcal{C}_{A, \pi/4}$ and $Z\in\mathcal{D}_W$.

Now we estimate the contribution of the integration along the small circles $\mathcal{E}_1, \dots , \mathcal{E}_{k_W}$. It is enough to prove that each residue at the poles in $W(s)+1, \dots, W(s)+k_{W(s)}$ is at most $\exp(-cNs)$, as the number of poles is only logarithmic in $s$. Instead of reproducing word-for-word the proof of Lemma 6.10 in \cite{ferrari2013tracy}, observe that
\begin{multline}
\Re\left[g_0(W)\right]-\Re\left[g_0(W+j)\right] = \\ \left(\Re\left[g_0(W)\right]-\Re\left[f_0(W)\right]\right) + \left( \Re\left[f_0(W)\right]-\Re\left[f_0(W+j)\right]\right) + \left( \Re\left[f_0(W+j)\right]-\Re\left[g_0(W+j)\right]\right).
\label{utilisationdef-g}
\end{multline}
The sum of the first and third terms is just $-\log(q) (f-g) j$ which is positive. And by the arguments of  \cite[Lemma 6.10]{ferrari2013tracy}, for $W=W(s)$ with large $s$, there exists a constant $c'>0$ such that for $s\geqslant s_2$
\begin{equation*}
\Re\left[f_0(W)\right]-\Re\left[f_0(W+j)\right] > c'  s.
\end{equation*}
One concludes that the integrand in \ref{equationexpressionkernelgaussien} behaves like $\exp(-cNs)$ which concludes the proof.
\end{proof}

As in the case $\alpha >q^{\theta}$, we can now formulate the  Fredholm determinant on the contour $\mathcal{C}_{A, \pi/4}$ (instead of $\mathcal{C}_{A, \varphi}$ for $\varphi \in (0, \pi/4)$). As in Section \ref{sectioncriticalcase}, a small modification of the contours on a $N^{-1/2}$-neighbourhood of $A$ is needed so that the pole for $W$ in $A$ is inside the contour, and the contour $\mathcal{C}_{A, \pi/4}$ stays to the left of $\mathcal{D_W}$. 

\begin{prop}
For any fixed $\delta >0$ and $\epsilon >0$, there is an $N_1$ such that 
\begin{equation*}
\vert \det(I+K_x)_{\mathbb{L}^2(\mathcal{C}_{A, \pi/4})} - \det(I+K_{x, \delta})_{\mathbb{L}^2(\mathcal{C}_{A}^{\delta})}\vert < \epsilon
\end{equation*}
for all $N>N_1$ where $\mathcal{C}_{A}^{\delta} = \mathcal{C}_{A, \pi/4} \cap \lbrace W\; \ \vert W-A\vert \leqslant \delta \rbrace $, and 
\begin{equation}
K_{x, \delta}(W,W')= \frac{q^W \log q}{2i\pi} \int_{\mathcal{D}_W^{\delta}} \frac{\mathrm{d}Z}{q^Z - q^{W'}}\frac{\pi}{\sin(\pi(Z-W))} \frac{\exp(Ng_0(Z)+N^{1/2}g_1(Z))}{\exp(Ng_0(W)+N^{1/2}g_1(W))}\frac{\phi(Z)}{\phi(W)}
\label{equationkernelgaussienlocal}
\end{equation}
and $\mathcal{D}_{W}^{\delta} = \mathcal{D}_{W} \cap \lbrace Z\; \ \vert Z-A\vert \leqslant \delta \rbrace $
\end{prop}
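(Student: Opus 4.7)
The proof will follow the same template as Proposition~\ref{propositionlocalization} in the $\alpha>q^\theta$ case, with the function $f_0$ replaced by $g_0$, the critical point $\theta$ replaced by $A$, and the fluctuation scale $N^{1/3}$ replaced by $N^{1/2}$. The plan is to write the Fredholm determinant as its expansion
\begin{equation*}
\det(I+K_x)_{\mathbb{L}^2(\mathcal{C}_{A,\pi/4})} = \sum_{n=0}^{\infty}\frac{1}{n!}\int_{\R^n}\det\bigl(K_x(W(s_i),W(s_j))\bigr)_{1\le i,j\le n}\prod_{i=1}^n\frac{\mathrm{d}W(s_i)}{\mathrm{d}s_i}\mathrm{d}s_i
\end{equation*}
and to show by dominated convergence that restricting each $s_i$-integration from $\R$ to $[-s_\delta,s_\delta]$, where $s_\delta>0$ is defined by $|W(s_\delta)-A|=\delta$, produces a vanishing error as $N\to\infty$. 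In the same stroke, the $Z$-integrals will be localized to $\mathcal{D}_W^\delta$.

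First I would use Lemma~\ref{lemmaexponentialdecay2}, which already guarantees that for $|s|>s_0$ and $N>N_0$ one has $|K_x(W(s),W')|\le \exp(-cNs)$. This handles the tails of the $W$-contour. To cover the intermediate range $s_0<|s|\le s_\delta$ — or more precisely to improve the bound so that it produces a decay rate that depends on $\delta$ — I invoke the steep-descent property of $\mathcal{C}_{A,\pi/4}$ and the periodic steep-descent of the vertical part $\lbrace A+\sigma+i\R\rbrace$ established in Lemma~\ref{lemmasteepdescent2}. Together these give a constant $c_\delta>0$ such that $\Re[g_0(Z)-g_0(W(s))]<-c_\delta s$ uniformly for $|s|>s_\delta$, $Z\in\mathcal{D}_W$, $N>N_0$; this strengthens Lemma~\ref{lemmaexponentialdecay2} to the form $|K_x(W(s),W')|<C_\delta\exp(-c_\delta Ns/2)$ valid on all of $|s|>s_\delta$.

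An analogue of the residue-path argument of Lemma~\ref{lemmacontributionresidus} is needed to bound the contributions of the small circles $\mathcal{E}_1,\dots,\mathcal{E}_{k_W}$: one shows that $\Re[g_0(W)-g_0(W+j)]$ stays positive (and bounded below by $\eta>0$) for every $W\in\mathcal{C}_{A,\pi/4}$ and every residue index $j\ge 1$. The decomposition (\ref{utilisationdef-g}) used in the proof of Lemma~\ref{lemmaexponentialdecay2} already provides this: the difference equals $-\log(q)(f-g)j+(\Re[f_0(W)]-\Re[f_0(W+j)])$, where the first term is strictly positive since $f>g$ for $A>\theta$, and the second term is controlled exactly as in Lemma~\ref{lemmacontributionresidus} using steep-descent paths built from pieces of $\mathcal{C}_{A,\pi/4}$, horizontal segments, and vertical segments, distinguishing the two cases $\Re[q^{W+j}-q^A]\gtrless 0$. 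The factor $\phi(Z)/\phi(W)$ is harmless outside a fixed neighbourhood of $A$ since its modulus is bounded by a polynomial in $N$ coming from the simple pole of order $k$ at $W=A$ (which is excluded here by $|W-A|>\delta$), and this polynomial growth is absorbed by the exponential bound above.

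Having assembled the bound $|K_x(W(s),W')|\le C_\delta\exp(-c_\delta N|s|/2)$ valid for all $|s|>s_\delta$ and $N>N_0$, Hadamard's inequality applied to $\det(K_x(W(s_i),W(s_j)))$ gives an $n^{n/2}C_\delta^n\prod_i e^{-c_\delta N|s_i|/2}$ bound, so each $n$-th term in the Fredholm expansion with at least one $s_i$ outside $[-s_\delta,s_\delta]$ is $O(\exp(-c_\delta Ns_\delta/2))$, and the series over $n$ converges absolutely and uniformly in $N>N_0$. Hence the error from truncating the $W$-integrals vanishes as $N\to\infty$. The localization of the $Z$-integrations from $\mathcal{D}_W$ down to $\mathcal{D}_W^\delta$ is obtained by the same dominated convergence argument, using the periodic steep-descent of $\Re[g_0]$ on each period $\lbrace A+\sigma+it:t\in[(2k-1)\pi/|\log q|,(2k+1)\pi/|\log q|]\rbrace$ and the exponential decay $|{\pi}/{\sin(\pi(Z-W))}|\le Ce^{-\pi\Im[Z]}$ which kills the contributions from the periods $k\ne 0$ and from the tails of the central period. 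The main delicate point, which is where I would spend the most care, is keeping the residue contributions under control uniformly in $W\in\mathcal{C}_{A,\pi/4}\setminus\mathcal{C}_A^\delta$: for these $W$ one must verify, by the three-step path argument above, that $\Re[g_0(W)-g_0(W+j)]$ remains bounded below by a quantity growing linearly in $|s|$, so that the sum over the logarithmically many residues contributes negligibly.
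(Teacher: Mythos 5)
Your overall strategy is exactly the paper's: rerun the dominated-convergence/Hadamard argument of Proposition \ref{propositionlocalization} with $f_0,\theta,N^{1/3}$ replaced by $g_0,A,N^{1/2}$, using Lemma \ref{lemmasteepdescent2} and Lemma \ref{lemmaexponentialdecay2} for the exponential bounds off a $\delta$-neighbourhood and the sine decay plus periodic steep descent for the $Z$-localization; and you correctly isolate the only genuinely new ingredient, namely a lower bound $\Re\left[g_0(W)-g_0(W+j)\right]>\eta$ uniform over $W\in\mathcal{C}_{A,\pi/4}$ (the paper's Lemma \ref{lemmacontributionresidusgaussien}), which you reduce via the decomposition (\ref{utilisationdef-g}) to $\Re\left[f_0(W)-f_0(W+j)\right]\geqslant 0$, the strict gap coming from $-\log(q)(f-g)j>0$. (Your remark that $\phi(Z)/\phi(W)$ is only ``polynomially large in $N$'' is unnecessary: off the $\delta$-neighbourhood of $A$ it is bounded by a $\delta$-dependent constant; this is harmless either way.)

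The gap is in how you propose to prove that $f_0$-inequality: you split into the two cases $\Re\left[q^{W+j}-q^{A}\right]\gtrless 0$ and claim the paths of Lemma \ref{lemmacontributionresidus} apply verbatim. But the monotonicity facts driving those path arguments are tied to $\theta$, the critical point of $f_0$, not to $A$: the horizontal-segment argument (the sign computation (\ref{equationnumeratorf0prime})) requires $\arg(q^{Z}-q^{\theta})\in[\pi/4,\pi/2]$ along the segment, and the vertical-segment argument (via (\ref{equationpartieimaginairef0prime})) requires the segment to lie to the right of $\mathcal{C}_{\theta,\pi/2}$, i.e.\ $\Re\left[q^{Z}\right]<q^{\theta}$. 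Since $\alpha<q^{\theta}$, your criterion would send residues with $\alpha<\Re\left[q^{W+j}\right]<q^{\theta}$ to the ``horizontal path'' case, where the sign argument genuinely fails ($\arg(q^{W+X}-q^{\theta})$ can exceed $\pi/2$ there); this is precisely the situation that forced the detour construction in Lemma \ref{lemmacontributionresidus}. The correct classification, as in the paper, is made relative to $q^{\theta}$ (i.e.\ relative to $\mathcal{C}_{\theta,\pi/2}$), and, because the contour is now based at $A>\theta$, $\mathcal{C}_{A,\pi/4}$ crosses $\mathcal{C}_{\theta,\pi/2}$ at a point $W_A$, which forces a third case for residues with $\Re\left[W_A\right]<\Re\left[W+j\right]\leqslant A$: there the horizontal detour to $\mathcal{C}_{\theta,\pi/2}$ is unavailable, and one instead follows $\mathcal{C}_{A,\pi/4}$ to the point with matching real part and then moves along a single vertical segment, which works because that segment lies to the right of $\mathcal{C}_{\theta,\pi/2}$ (plus the separate easy treatment of residues with $\Re\left[W+j\right]=A+\sigma'$, $0<\sigma'\leqslant 2\sigma$). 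With this corrected three-case path construction your argument closes, and the rest of your sketch matches the paper's proof.
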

\begin{proof}
Using Lemma \ref{lemmasteepdescent2} and Lemma \ref{lemmaexponentialdecay2}, one can apply exactly the same proof as in Proposition \ref{propositionlocalization}. 
We are left with proving that the contribution of all small circles in the contour $\mathcal{D}_W $ goes to zero when $N$ tends to infinity, which results from the following Lemma as in Proposition \ref{propositionlocalization}.
\begin{lem}
There exists $\eta>0$ such that   for any $W\in\mathcal{C}_{A, \pi/4}$, $\Re\left[g_0(W)-g_0(W+j)\right]>\eta$ , for all $j=1, \dots, k_W$.
\label{lemmacontributionresidusgaussien}
\end{lem}
\begin{proof}
First notice that 
\begin{equation*}
\Re\left[g_0(W)-g_0(W+j)\right]= (-\log q) \left(f(q, \theta)-g(q, \theta)\right) j + \Re
\left[f_0(W)-f_0(W+j)\right].
\end{equation*}
As $f(q, \theta)-g(q, \theta)>0$, it is enough to prove that for any $W\in\mathcal{C}_{A, \pi/4}$, $\Re\left[f_0(W)-f_0(W+j)\right]\geqslant 0$, for all $j=1, \dots, k_W$.
The proof is adapted from Lemma \ref{lemmacontributionresidus} and splits here into three parts. As before, we prove the result for the residues lying above the real axis. Let $W_A$ be the point where $\mathcal{C}_{A, \pi/4}$ and $\mathcal{C}_{\theta, \pi/2} $ intersect (above the real axis). In other words, $W_A = \log_q(q^{\theta} + i(q^{\theta} -\alpha)) = \log_q(\alpha + (1+i)(q^{\theta} -\alpha))$.

\item[\textbf{Case 1 :  $\Re\left[ q^{W+j} \right] > q^{\theta} $.}] This is the case when $W+j$ lies on the left of $\mathcal{C}_{\theta, \pi/2} $. The fact that  $\Re\left[f_0(W)-f_0(W+j)\right]\geqslant 0$ was proved in Lemma \ref{lemmacontributionresidus}.

\item[\textbf{Case 2 :  $\Re\left[ q^{W+j}\right] < q^{\theta} $ and $\Re[ W+j] \leqslant \Re \left[W_A\right] $.}] Let $W(s)$ be a parametrization of  $\mathcal{C}_{A, \pi/4}$  and $\hat{W}(t)$ a parametrization of $\mathcal{C}_{\theta, \pi/2} $. Let  $t_j$ be such that $\Re\left[ \hat{W}(t_j)\right] = \Re\left[ W+j\right]$, and $u_j$ be such that $\Im\left[ W(u_j)\right] = \Im\left[ \hat{W}(t_j)\right] $ (see Figure \ref{figurecheminresidusgaussien}). 
If $\Re\left[ W+j\right] < \Re \left[W_A\right]$, we consider the path from $W$ to $W(u_j)$ along $\mathcal{C}_{A, \pi/4}$, from $W(u_j)$ to $ \hat{W}(t_j)$ along a horizontal line, and from $\hat{W}(t_j)$ to $W+j$ along a vertical line. The fact that the horizontal segment is on the left of $\mathcal{C}_{\theta, \pi/2} $ and the vertical segment is on the right of $\mathcal{C}_{\theta, \pi/2} $ ensures, by the same arguments as in Lemma \ref{lemmacontributionresidus}, that $\Re \left[f_0\right]$ decays along this path. 

\item[\textbf{Case 3 : $\Re\left[ W_A\right] < \Re \left[W+j\right] \leqslant A$.}] Let $s_j$ be such that $\Re \left[W(s_j)\right] = \Re\left[W+j\right]$. 
We consider the path from $W$ to $W(s_j)$ along $\mathcal{C}_{A, \pi/4}$, and from $W(s_j) $  to $W+j$ along a vertical line. The fact that the vertical segment from $W(s_j) $  to $W+j$ is on the right of $\mathcal{C}_{\theta, \pi/2} $ ensures again that $\Re \left[f_0\right]$ decays along this path. 

It may also happen that $\Re\left[ W+j\right] = A+\sigma'$ for some $0<\sigma' \leqslant 2\sigma$,  but we treat this case exactly as in the end of the proof of Lemma \ref{lemmacontributionresidus}.
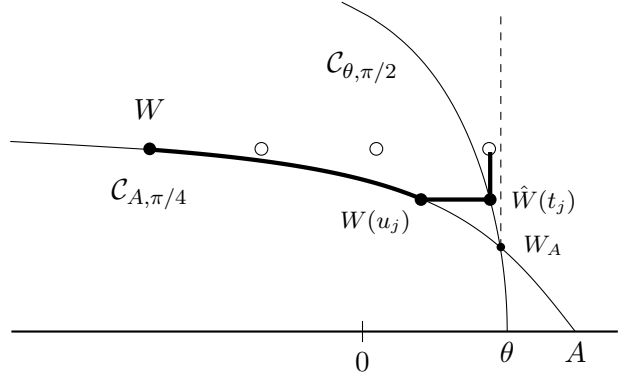
\begin{figure}
\begin{center}
\begin{tikzpicture}[scale=1.4]
\draw[thick] (-3.3, 0) -- (2.4,0);
\draw (0,0.1) -- (0,-0.1) node[anchor=north]{0};
\draw[scale=1, domain=0:1.385, smooth, variable=\t] plot({2 - tan(\t r)},{1.3*\t}); 

\draw[scale=1, domain=0:1, smooth, variable=\t] plot({1.36 - tan(\t^2 r)},{3.12*\t});

\fill[] (-2, 1.73) circle(0.06);
\draw (-0.95, 1.73) circle(0.06);
\draw (0.13, 1.73) circle(0.06);
\draw (1.19, 1.73) circle(0.06);

\fill[] (1.3, 0.8) circle(0.04);
\draw[dashed] (1.3, 0.8) -- (1.3, 3);

\fill[] (0.55, 01.25) circle(0.06);
\fill[] (1.2, 1.25) circle(0.06);
\draw[ultra thick] (0.55, 1.25) -- (1.2, 1.25);
\draw[ultra thick, scale=1, domain=0.95:1.325, smooth, variable=\t] plot({2 - tan(\t r)},{1.3*\t}); 
\draw[ultra thick] (1.2, 1.25) -- (1.2, 1.7);

\draw (-2, 2.3) node[anchor = north] {$W$};
\draw (1.7, 1) node[anchor = north] {\footnotesize{$W_A$}};
\draw (0.55, 1.25) node[anchor = north east] {\footnotesize{$W(u_j)$}};
\draw (1.3, 1.25) node[anchor = west] {\footnotesize{$\hat{W}(t_j)$}};
\draw (1.36,0) node[anchor=north] {$\theta$};
\draw (2,0) node[anchor=north] {$A$};
\draw (-2, 1.3) node{$\mathcal{C}_{A, \pi/4} $};
\draw (0, 2.5) node{$\mathcal{C}_{\theta, \pi/2} $};
\end{tikzpicture}
\end{center}
\caption{The thick line is the path from $W$ to $W+j$ with $j=3$ in the case 2 : $\Re\left[ q^{W+j}\right] < q^{\theta} $ and $\Re[ W+j] \leqslant \Re \left[W_A\right] $.}
\label{figurecheminresidusgaussien}
\end{figure}
\end{proof}

\end{proof}

For simplicity, we modify again the contours, the curves becoming straight lines. By the Cauchy theorem, this modification is authorized as soon as the endpoints  of the contours coincide.
The contour $\mathcal{C}_{A}^{\delta}$ becomes $\lbrace A + e^{i(\pi-\gamma)\sgn(y)} \vert y\vert, y \in \pm [N^{-1/2}, \delta]\rbrace\cup \lbrace N^{-1/2}e^{i t}, t\in[\gamma-\pi, \pi-\gamma]\rbrace$ where the angle $\gamma<\pi/4$ is chosen so that the endpoints coincide. 
We also consider the corresponding contour $\mathcal{C}_{N} $ for the rescaled variables $w = N^{1/2}(W-A)$. 

Similarly, the contour for the variable $Z$ becomes $\lbrace A + e^{i(\pi/2-\gamma) \sgn(y)} \vert y\vert,  y \in \pm [N^{-1/2}, \delta]\rbrace\cup \lbrace N^{-1/2}e^{i t}, t\in[-\pi/2+\gamma, \pi/2-\gamma]\rbrace$, and the constant $\sigma$ used in the definition of $\mathcal{D}_W$ is chosen so that the endpoints coincide. We also consider the corresponding contour  $\mathcal{D}_{N} $ for the rescaled variable $z=N^{1/2}(Z-A)$.
\begin{prop}
There exist $\delta' >0 $ such that for $\delta <\delta'$, 
\begin{equation*}
\lim_{N\to\infty } \vert \det(I+K_{x, \delta})_{\mathbb{L}^2(\mathcal{C}_{A}^{\delta})} -\det(I+K'_{x,N})_{\mathbb{L}^2(\mathcal{C}_N)} \vert =0
\end{equation*}
where 
\begin{equation}
K'_{x,N}(w,w') = \frac{1}{2i\pi} \int_{\mathcal{D}_N} \frac{\rm d z}{(w-z)(z-w')} \frac{\exp(\sigma z^2/2 -\sigma^{1/2}  zx)}{\exp(\sigma w^2/2 - \sigma^{1/2}wx)}\left(\frac{w}{z}\right)^k.
\label{equationkernelgaussienlimit}
\end{equation}
\label{propositionlimitfredholmgaussien}
\end{prop}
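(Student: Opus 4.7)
The plan is to perform a Laplace-method rescaling around the critical point $A$ of $g_0$ and then adapt, step by step, the argument of Proposition~\ref{propositionlimitfredholm}. Setting
\begin{equation*}
w = N^{1/2}(W - A),\quad z = N^{1/2}(Z - A),\quad w' = N^{1/2}(W' - A),
\end{equation*}
the truncated contours $\mathcal{C}_A^\delta$ and $\mathcal{D}_W^\delta$ map onto $\mathcal{C}_N$ and $\mathcal{D}_N$ (the $N^{-1/2}$-modification of the original contours around $A$ becoming the unit-radius arc in the new variables). Defining $\tilde K_{x,\delta}^N(w,w') := N^{-1/2} K_{x,\delta}(A + wN^{-1/2}, A + w'N^{-1/2})$, a direct change of variable in the Fredholm expansion gives $\det(I + K_{x,\delta})_{\mathbb{L}^2(\mathcal{C}_A^\delta)} = \det(I + \tilde K_{x,\delta}^N)_{\mathbb{L}^2(\mathcal{C}_N)}$, so the statement reduces to showing $\tilde K_{x,\delta}^N \to K'_{x,N}$ both pointwise and in a dominated-convergence sense suitable for the Fredholm series.

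Pointwise convergence is obtained by Taylor expanding each factor of (\ref{equationkernelgaussienlocal}) around $A$. Since $g_0'(A) = 0$ and $g_0''(A) = \sigma$ (equation (\ref{equationderiveesecondeg0})),
\begin{equation*}
N\bigl(g_0(A + zN^{-1/2}) - g_0(A)\bigr) = \tfrac{\sigma}{2}\,z^2 + O(N^{-1/2}|z|^3)
\end{equation*}
uniformly on $|z| \leq \delta N^{1/2}$, while a direct computation (using $q^A = \alpha$) gives $g_1'(A) = -\sigma^{1/2} x$ and hence
\begin{equation*}
N^{1/2}\bigl(g_1(A + zN^{-1/2}) - g_1(A)\bigr) = -\sigma^{1/2} x z + O(N^{-1/2}|z|^2).
\end{equation*}
The rational–trigonometric prefactor $\frac{q^W \log q}{q^Z - q^{W'}} \cdot \frac{\pi}{\sin(\pi(Z-W))}$ expands at order $N/((z-w)(z-w'))$, which after combination with the $\mathrm{d}Z = N^{-1/2}\mathrm{d}z$ Jacobian and the $N^{-1/2}$ prefactor of $\tilde K_{x,\delta}^N$ produces the rational factor of $K'_{x,N}$, with signs fixed by the orientations of $\mathcal{C}_N$ and $\mathcal{D}_N$. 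Finally, the $\phi$ ratio is handled by a direct transcription of Lemma~\ref{lemmapointwiselimitcritical}: the $k$ factors $(q^Z/\alpha; q)_\infty = (q^{Z-A};q)_\infty$ each vanish to first order at $A$ while the remaining factors have finite nonzero limits, yielding both the pointwise limit $\phi(Z)/\phi(W) \to (z/w)^k$ and the uniform bound $|\phi(Z)/\phi(W)| \leq C|(Z-A)/(W-A)|^k$ on a fixed neighbourhood of $A$. Multiplying these four expansions yields the pointwise convergence of the kernel.

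To upgrade to Fredholm convergence one mimics the dominated-convergence step of Proposition~\ref{propositionlimitfredholm}. Choosing $\delta' > 0$ small enough that the cubic Taylor error in $g_0$ is absorbed by the quadratic main term, one obtains a uniform-in-$N$ Gaussian bound
\begin{equation*}
|\tilde K_{x,\delta}^N(w,w')| \leq C\,\Bigl|\tfrac{z}{w}\Bigr|^k \exp\bigl(-c\,\Re(z^2) + c\,\Re(w^2)\bigr)
\end{equation*}
whose real parts decay along $\mathcal{D}_N$ and $\mathcal{C}_N$ (this is the rescaled form of the steep-descent properties of Lemma~\ref{lemmasteepdescent2}). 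The only singularity is the $|w|^{-k}$ factor at $w=0$, which is harmless because $\mathcal{C}_N$ stays at distance one from the origin. Hadamard's inequality then produces a summable majorant for the Fredholm expansion, and dominated convergence concludes the proof. The main technical point is the uniform control on $\phi(Z)/\phi(W)$ in the neighbourhood of $A$: the vanishing of $\phi$ at $A$ is what produces the essential $(z/w)^k$ singularity of the limit kernel, and one must check that the $N^{-1/2}$-scale modification of the original contours is compatible with the uniform-in-$N$ bound of Lemma~\ref{lemmapointwiselimitcritical}. All other estimates are reruns of the $\alpha > q^\theta$ argument with cubic exponential decay replaced by quadratic (Gaussian) decay.
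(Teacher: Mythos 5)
Your proposal follows essentially the same route as the paper's proof: the $N^{-1/2}$ rescaling of the kernel and contours around the critical point $A$, Taylor expansions of $g_0$ and $g_1$ using $g_0'(A)=0$, $g_0''(A)=\sigma$, $g_1'(A)=-\sigma^{1/2}x$, the transcription of Lemma~\ref{lemmapointwiselimitcritical} to get both the pointwise limit $\phi(Z)/\phi(W)\to (z/w)^k$ and the uniform bound near $A$ (with the unit-radius arc of $\mathcal{C}_N$ keeping $w$ away from $0$), and finally a Hadamard-bound plus dominated-convergence argument for the Fredholm series. The only blemishes are notational: your displayed majorant should not contain the integrated variable $z$, and the exponent should be of the form $c\,\Re\left[z^2\right]-c\,\Re\left[w^2\right]$ (both terms negative along $\mathcal{D}_N$ and $\mathcal{C}_N$ respectively), which is clearly what you intend when you invoke the rescaled steep-descent properties.
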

\begin{proof}
Consider the rescaled kernel 
\begin{equation*}
K_{x,\delta}^{N}(w,w') = N^{-1/2} K_{x, \delta N^{1/2}} (A+wN^{-1/2}, A+w' N^{-1/2})
\end{equation*}
where we use the new contour for $Z$ in the definition of $K_{x, \delta N^{1/2}}$, i.e. $A+N^{-1/2}\mathcal{D}_N$. By the previous discussion on the contours,  
\begin{equation*}
\det(I+K_{x, \delta})_{\mathbb{L}^2(\mathcal{C}_{A}^{\delta})} =  \det(I+K_{x, \delta}^N)_{\mathbb{L}^2(\mathcal{C}_N)}.
\end{equation*}
We first give an estimate for the exponential factor in the kernel $K_{x\delta}^N$ in Equation (\ref{equationkernelgaussienlocal}). By Taylor approximation, there exists $C_{g_0}$ such that for $ \vert Z-A \vert < A$, 
\begin{equation*}
\vert g_0(Z) - g_0(A) - \frac{\sigma}{2}(Z-A)^2\vert < C_{g_0} \vert Z-A \vert^3 
\end{equation*}
and since $g_1'(A) = -\sigma^{1/2} x $, there exists $C_{g_1}$ such that 
\begin{equation*}
\vert g_1(Z)- g_1(A)  +\sigma^{1/2} x (Z-A)\vert < C_{g_1} \vert Z-A \vert^2.  
\end{equation*}
The argument in the exponential in (\ref{equationkernelgaussienlocal}) is $g(Z, W, N):= N(g_0(Z) - g_0(W)) + N^{1/2}(g_1(Z) - g_1(W))$. Let us denote $g^{\mathrm{lim}}(z, w) = \frac{\sigma}{2}(z^2-w^2) - \sigma^{1/2}x(z-w) $. Using the Taylor expansions above and using the change of variables $Z = A+zN^{-1/2}$, and likewise for $W$ and $W'$, one has 
\begin{equation}
\left| g(Z, W, N) - g^{\mathrm{lim}}(z, w) \right| <N^{-1/2}\left( C_{g_0 }(\vert z\vert^3 +\vert w\vert^3 )+C_{g_1 }(\vert z\vert^2 +\vert w\vert^2 ) \right).
\label{equationestimeeexponentiellegaussien1}
\end{equation}
For $z\in \mathcal{D}_N$ and $w\in\mathcal{C}_N$, the last inequality rewrites 
\begin{equation}
\left| g(Z, W, N) - g^{\mathrm{lim}}(z, w) \right| <\delta\left( C_{g_0 }(\vert z\vert^2 +\vert w\vert^2 )+C_{g_1 }(\vert z\vert+\vert w\vert ) \right).
\label{equationestimeeexponentiellegaussien2}
\end{equation}

Now we estimate the remaining factors in the integrand in (\ref{equationkernelgaussienlocal}). Let us denote
\begin{equation*}
G(Z,W,W'):= \frac{N^{-1/2}}{q^Z-q^{W'}} \frac{\pi}{\sin(\pi(Z-W))}\frac{\phi(Z)}{\phi(W)}, 
\end{equation*}
and the remaining factors in the integrand in (\ref{equationkernelgaussienlimit}) as 
\begin{equation*}
G^{\mathrm{lim}}(z,w,w'):= \frac{1}{z-w'}\frac{1}{z-w} \left(\frac{w}{z}\right)^k.
\end{equation*}

We prove first that for any $w,w' \in \mathcal{C}_N$, $K_{x, \delta}^N(w,w') - K'_{x,N}(w,w') $ goes to zero when $N$ goes to infinity. The error can be estimated by 
\begin{multline}
\vert K_{x, \delta}^N(w,w') - K'_{x,N}(w,w') \vert \leqslant  \int_{\mathcal{D}_N}\mathrm{d}z \exp(g^{\mathrm{lim}}) \vert G(Z, W, W' ) \vert \left| \exp(g - g^{\mathrm{lim}})-1\right| \\
+ \int_{\mathcal{D}_N}\mathrm{d}z \exp(g^{\mathrm{lim}}) \left| G - G^{\mathrm{lim}} \right|, 
\label{equationestimeeerreurgaussien}
\end{multline}
where we have omitted the arguments of the functions $g(Z, W, N)$, $g^{\mathrm{lim}}(z,w)$, $G(Z,W,W')$, $G^{\mathrm{lim}}(z,w,w')$, with $Z=A + zN^{-1/2}$ as before, and likewise for $W, W'$. By (\ref{equationestimeeexponentiellegaussien1}) and (\ref{equationestimeeexponentiellegaussien2}) and the inequality $\vert \exp(x) - 1\vert \leqslant \vert x\vert \exp(\vert x\vert)$, we have 
\begin{equation*}
\vert \exp(g - g^{\mathrm{lim}})-1\vert  < N^{-1/2} P(\vert z\vert, \vert w \vert) \exp\left(\delta \left(C_{g_0} (\vert z\vert^2+\vert w\vert^2)+C_{g_1} (\vert z\vert+\vert w\vert) \right) \right), 
\end{equation*}
where $P$ is the polynomial $P(X,Y) = C_{g_0} (X^3+Y^3)+C_{g_1} (X^2+Y^2)$. Hence for $\delta$ small enough, the first integral in (\ref{equationestimeeerreurgaussien}) has quadratic exponential decay (due to the decay of $\exp(g^{\mathrm{lim}})$). Thus, by dominated convergence,  the first integral in (\ref{equationestimeeerreurgaussien}) goes to zero as $N$ goes to infinity by dominated convergence. Using estimate (\ref{equationboundphicritical}) in Lemma \ref{lemmapointwiselimitcritical}, the second integral in (\ref{equationestimeeerreurgaussien}) also goes to zero since
\begin{multline*}
\left| G(A +zN^{-1/2}, A +wN^{-1/2}, A +w'N^{-1/2})- G^{\mathrm{lim}}(z,w,w')\right| < \\ N^{-1/3} Q(\vert z \vert, \vert w \vert, \vert w' \vert) G^{\mathrm{lim}}(z,w,w'), 
\end{multline*}
for some polynomial $Q$.

 Moreover, the estimates in right-hand-sides of Equations (\ref{equationestimeeexponentiellegaussien2}) and (\ref{equationboundphicritical}) show that there exists a constant $C>0$ independent of $N$ such that for all $w, w'\in\mathcal{C}_N$,
 \begin{equation*}
\vert K_{x, \delta}^N (w,w')\vert < C  \exp\left(-\sigma/2 w^2 +C_{g_0}\delta \vert w \vert^2 +\sigma^{1/2}xw + C_{g_1}\delta \vert w\vert\right).
 \end{equation*}
 Hence for $\delta$ small enough, Hadamard's bound yields
\begin{equation*}
\left|\det\left(K_{x, \delta}^N (w_i,w_j)_{1\leqslant i,j\leqslant n} \right ) \right| \leqslant n^{n/2} C^n \prod_{i=1}^n e^{-\sigma/4\Re \left[w_i^2\right]}.
\end{equation*}
It follows that  the Fredholm determinant expansion,
\begin{equation*}
\det(I+K_{x, \delta}^N)_{\mathbb{L}^2(\mathcal{C}_N)} = \sum_{n=0}^{\infty} \frac{1}{n!} \int_{\mathcal{C}_N}\mathrm{d}w_1 \dots \int_{\mathcal{C}_N}\mathrm{d}w_n \det\left(K_{x, \delta}^N (w_i,w_j)_{1\leqslant i,j\leqslant n} \right ),
\end{equation*}
is absolutely integrable and summable. The conclusion of the Proposition follows by dominated convergence.
\end{proof}
Finally, since the integrand has quadratic exponential decay along the contours $\mathcal{C}_{\infty}$ and  $\mathcal{D}_{\infty}$, dominated convergence, again,  yields 
\begin{equation*}
\det(I+ K'_{x,N})_{\mathbb{L}^2(\mathcal{C}_N)} \underset{N\to\infty}{\longrightarrow} \det(I+ K'_{x,\infty})_{\mathbb{L}^2(\mathcal{C}_\infty)}.
\end{equation*}
The third part of Theorem \ref{theomainresult} now follows from a reformulation of the Fredholm determinant achieved in the following Proposition.
\begin{prop}
\begin{equation*}
\det(I + K'_{x,\infty})_{\mathbb{L}^2(\mathcal{C}_\infty)} = G_k(x)
\end{equation*}
where $G_k$ is defined in definition \ref{defdistributions}.
\end{prop}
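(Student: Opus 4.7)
The proof follows the same template as in the previous two subsections: one rescales to remove $\sigma$, applies the classical Fredholm reformulation of \cite[Lemma 8.7]{borodin2012free} to pass from a Fredholm determinant on $\mathbb{L}^2(\mathcal{C}_\infty)$ to one on $\mathbb{L}^2([x,+\infty))$, and identifies the resulting kernel with the integral representation of $H_k$ given in Definition~\ref{defdistributions}.

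The rescaling consists in the substitution $w=\tilde w/\sqrt{\sigma}$, $w'=\tilde w'/\sqrt{\sigma}$, $z=\tilde z/\sqrt{\sigma}$. Under it the quadratic exponent $\sigma z^2/2-\sigma^{1/2}zx$ becomes $\tilde z^2/2-\tilde z x$ and likewise for $\tilde w$, the Jacobians combine with $1/[(w-z)(z-w')]$ to leave the overall form of the kernel unchanged, the rational factor involving the $k$-th power is unaffected, and the rescaled contour $\sqrt{\sigma}\mathcal{C}_\infty$ is equivalent (by the same steep-descent arguments used earlier in this section) to a standard contour in the left half-plane modified around $0$. Next, the identity $1/(\tilde z-\tilde w')=\int_0^\infty e^{-\lambda(\tilde z-\tilde w')}\,d\lambda$, valid because $\mathcal{D}_\infty$ lies strictly to the right of $\mathcal{C}_\infty$, factorises the rescaled kernel as a composition $AB$ of operators between $\mathbb{L}^2(\mathcal{C}_\infty)$ and $\mathbb{L}^2(\mathbb{R}_+)$. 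After the affine substitution $\lambda=u-x$ identifying $\mathbb{L}^2(\mathbb{R}_+)$ with $\mathbb{L}^2([x,+\infty))$, the standard identity $\det(I+AB)=\det(I+BA)$ produces a new kernel on $\mathbb{L}^2([x,+\infty))$ of double contour integral form, with factors $1/(\tilde z-\tilde w)$, Gaussian exponentials $e^{\tilde z^2/2-u'\tilde z}/e^{\tilde w^2/2-u\tilde w}$, and the appropriate power of $\tilde z/\tilde w$.

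Finally, this kernel is compared with the integral representation of $H_k$ from Definition~\ref{defdistributions}. The integrands agree term-by-term up to a transposition in the $(u,u')$ variables, which is harmless since $H_k$ is symmetric (as is evident from the sum form $H_k(u,v)=\sum_{n=0}^{k-1} p_n(u)p_n(v)e^{-(u^2+v^2)/4}$ in orthonormal Hermite polynomials), and up to a diagonal conjugation that leaves the Fredholm determinant unchanged. One concludes $\det(I+K'_{x,\infty})_{\mathbb{L}^2(\mathcal{C}_\infty)}=\det(I-H_k)_{\mathbb{L}^2([x,+\infty))}=G_k(x)$. The main technical obstacle is the careful bookkeeping of signs, contour orientations, and prefactors of $2i\pi$ in this final matching step, in particular verifying that the contour swap and residue considerations turn the single factor $1/(2i\pi)$ coming from $K'_{x,\infty}$ into the $1/(2i\pi)^2$ appearing in $H_k$; this is handled exactly as in the Airy and BBP reformulations already carried out.
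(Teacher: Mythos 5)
Your proposal is correct and takes essentially the same route as the paper: factorizing the kernel through $\tfrac{1}{z-w}=\int_{\R_+}e^{-\lambda(z-w)}\,\mathrm{d}\lambda$ into Hilbert--Schmidt operators between $\mathbb{L}^2(\mathcal{C}_\infty)$ and $\mathbb{L}^2(\R_+)$, using $\det(I+AB)=\det(I+BA)$, and identifying the resulting kernel with the shifted Hermite kernel $H_k(\lambda+x,\lambda'+x)$, with the $\sigma$-rescaling and the sign/transposition details treated as bookkeeping. Your explicit remark that the transposition is harmless by symmetry of $H_k$ addresses a point the paper leaves implicit, but the argument is the same.
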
 
\begin{proof}
Using the identity,
\begin{equation*}
\frac{1}{z-w} = \int_{\R_+} \mathrm{d}\lambda e^{-\lambda (z-w)}, 
\end{equation*}
valid when $\Re[z-w]>0$, the operator $K'_{x,\infty} $ can be factorized. $K'_{x,\infty}(w,w')  = -\left(AB \right)(w,w')$ where $A : \mathbb{L}^2(\R_+) \rightarrow\mathbb{L}^2(\mathcal{C}_\infty) $ and $B :  \mathbb{L}^2(\mathcal{C}_\infty)\rightarrow \mathbb{L}^2(\R_+)$  are Hilbert–Schmidt operators having kernels 
\begin{equation*}
A(w,\lambda) = e^{-w^2/2+w(x+\lambda)} w^k \hspace{1cm}\text{and} \hspace{1cm} B(\lambda, w') = \frac{1}{2i\pi}\int_{ \mathcal{D}_{\infty}} \frac{\mathrm{d}z}{z^k} \frac{e^{z^2/2-z(x+\lambda)}}{z-w'}.
\end{equation*}
We also have 
\begin{equation*}
BA(\lambda, \lambda') = \frac{1}{2i\pi} \int_{\mathcal{C}_{\infty}} \mathrm{d}w B(\lambda, w)A(w, \lambda')  = H_k(\lambda+x, \lambda'+x).
\end{equation*}
Since $\det(I-AB)_{\mathbb{L}^2(\mathcal{C}_\infty)} = \det(I-BA)_{\mathbb{L}^2(\R_+)} = G_k(x)$, we get the result.
\end{proof}

\section*{Ackowledgements}
The author would like to thank his thesis advisor Sandrine P\'ech\'e for her support through this work and  very useful comments on an earlier version of this paper.

\bibliographystyle{plain}
\bibliography{qTASEPslow.bib}
\end{document}